\definecolor{halfgray}{gray}{0.55} 
\definecolor{webgreen}{rgb}{0,0.5,0}
\definecolor{webbrown}{rgb}{.6,0,0} \hypersetup{%
\newtheorem{theorem}{Theorem}[section]
\newtheorem{lemma}[theorem]{Lemma}
\newtheorem{corollary}[theorem]{Corollary}
\theoremstyle{definition}
\newtheorem{example}[theorem]{Example}
\newtheorem{remark}[theorem]{Remark}
\newcommand{\field}[1]{\mathbb{#1}}
\newcommand{\R}{\field{R}}
\newcommand{\N}{\field{N}}
\newcommand{\Z}{\field{Z}}
\newcommand{\T}{\field{T}}
\newcommand{\K}{\field{K}}
\newcommand{\D}{\mathbb D}
\newcommand{\bS}{\mathbb S}
\newcommand{\cC}{\mathcal C}
\newcommand{\cF}{\mathcal{F}}
\newcommand{\cL}{\mathcal{L}}
\newcommand{\cP}{\mathcal{P}}
\newcommand{\cO}{\mathcal{O}}
\newcommand{\ie}{{\it i.e., } }
\newcommand{\eg}{{\it e.g., } }
\renewcommand{\phi}{\varphi}
\renewcommand{\|}{\,\Vert\,}
\begin{document}
\baselineskip=14pt

\title{Smooth rigidity for very non-algebraic expanding maps}
\author {Andrey Gogolev and Federico Rodriguez Hertz}\thanks{The authors were partially supported by NSF grants DMS-1823150 and DMS-1500947 \& DMS-1900778, respectively}

 \address{Department of Mathematics, The Ohio State University,  Columbus, OH 43210, USA}
\email{gogolyev.1@osu.edu}

\address{Department of Mathematics, The Pennsylvania State University, 
University Park, PA 16802, USA}
\email{hertz@math.psu.edu}

\begin{abstract} 
  \begin{sloppypar}
We show that the space of expanding maps contains an open and dense set where smooth conjugacy classes of expanding maps are determined by the values of the Jacobians of return maps at periodic points.
  \end{sloppypar}
\end{abstract}
\maketitle

\section{Introduction}

Let $M$ be a smooth closed manifold. Recall that a $C^r$, $r\ge 1$, map $f\colon M\to M$ is called {\it expanding } if 
$$
\|Dfv\|>\|v\|
$$
for all non-zero $v\in TM$ and some choice of Riemannian metric on $M$. It is easy to see that an expanding map is necessarily a covering map.

Recall that expanding maps have been classified up to topological conjugacy.  Shub \cite{Sh} proved that $M$ is covered by the Euclidean space and also that an expanding endomorphism of $M$ is topologically conjugate to an affine expanding endomorphism of an infranilmanifold if and only if the fundamental group $\pi_1(M)$ contains a nilpotent subgroup of finite index. Franks~\cite{Fr} showed that if $M$ admits an expanding endomorphism then $\pi_1(M)$ has polynomial growth. Finally, in 1981, Gromov \cite{Gr} completed classification by showing that any finitely generated group of polynomial growth contains a nilpotent
subgroup of finite index. Hence any expanding endomorphism is topologically conjugate to an affine expanding endomorphism of an infranilmanifold. 

Let $f_i\colon M_i\to M_i$ be $C^r$ smooth, $r\geq 1$, expanding maps $i=1,2$. Also we will assume that $f_1$ and $f_2$ are conjugated via a homeomorphism $h:M_1\to M_2$, \ie $h\circ f_1=f_2\circ h$. For example, homotopic expanding maps on the same manifold are always conjugate.

It is well known that $h$ is necessarily bi-H\"older continuous. However, a priori $h$ is not $C^1$ smooth with obvious obstructions carried by the eigendata of periodic points. That is, when $h$ is $C^1$, the differential of the return map $Df_1^n(x)$ is conjugate to $Df_2^n(h(x))$ when $x=f_1^n(x)$. A weaker necessary assumption is coincidence of {\it Jacobian data}, \ie
$$
\textup{Jac} (f_1^n)(x)=\textup{Jac} (f_2^n)(h(x))
$$
for all periodic points $x=f_1^n(x)$.

In this paper we offer the following progress for higher dimensional expanding maps.
{\it For any $r\ge 2$ there exists a $C^r$-dense and $C^1$-open subset $\mathcal U$ in the space of $C^r$ expanding maps such that if $f_1\in\mathcal U$ and $f_2$ is an expanding map which is conjugate to $f_1$ and has the same Jacobian data then the conjugacy is $C^{r-1}$.} In the proof we use the fact that $f_1$ lives on an infranilmanifold. In the next section we will give precise statements which, in particular, explicitly describe the set $\mathcal U$ in the next section. Our proof of this result was partially inspired by the Embedding theorem  (or Reconstruction theorem) of Takens~\cite{T}.

In dimension one smooth classification was already known. Indeed, Shub and Sullivan showed that for $C^r$, $r\ge 2$, expanding maps of the circle $S^1$ the above condition on coincidence of Jacobians implies that the conjugacy $h$ is $C^r$ smooth~\cite{SS}. In fact they proved a stronger result that an absolutely continuous conjugacy (which is not, a priori, even continuous) must be coincide a.e. with smooth conjugacy provided that the Jacobian of one of the expanding maps is not cohomologous to a constant. 

The analogous ``smooth conjugacy problem'' in the setting of Anosov diffeomorphisms was completely resolved by de la Llave, Marco and Moriy\'on in dimension 2~\cite{dlL0, LM, dlL}. In higher dimensions there was a lot of partial progress, \eg see~\cite{dlL2, G, KS} and references therein. However progress was made only for certain special classes of Anosov diffeomorphisms such as conformal or with a fine dominated splitting. When compared to this body of work, the current paper is very different. It relies on a fundamentally different approach --- to examine matching functions rather than matching measures. And it yields smooth classification on a large open set as opposed to characterization of smooth conjugacy classes of certain special maps.

The next section contains the statement of our main technical result Theorem~\ref{rigexpthm}. Then we state a number of corollaries for smooth conjugacy problem and discuss necessity of various assumptions. Section~3 is devoted to preliminaries on properties of the transfer operator associated to an expanding map. Section~4 contain the proof of the main theorem under an additional simplifying assumption that the underlying manifold is a torus, such an assumption makes the proof much shorter and more transparent. Then in Section~5 we prove Theorem~\ref{rigexpthm} in full generality. In Section~6 we derive all the corollaries on smooth classification problem. Then, nn Section~7, we give a number of examples of expanding maps illustrating various features of our results and proofs. Finally, in Section~8 we state a generalized factor version of Theorem~\ref{rigexpthm} and also give an application.

{\it Acknowledgement:} The second author was spending his sabbatical year in Laboratoire Paul Painlev\'e, Universit\'e de Lille during this research, he wants to thank them and specially Livio Flaminio for their warm and generous hospitality. He also thanks Livio Flaminio for all the discussions. We would like to thank Feliks Przytycki  for feedback pointing out references for Remark~\ref{remark_32}. We are grateful to Sasha Leibman and Vitaly Bergelson for their help with the elementary proof of Lemma~\ref{lemma_ratner}. Finally we would like to thank the referees for helpful feedback.

\section{The results.}

We adopt the standard convention and call a map $f\colon M\to M$ {\it $C^r$-smooth}, $r\ge 0$, if it is $\lfloor r\rfloor$ times continuously differentiable and its $C^{\lfloor r\rfloor}$-differential is H\"older continuous with exponent $r-\lfloor r\rfloor$. We also allow $r=\infty$ and $r=\omega$ (real analytic maps). One defines $C^r$ smooth functions on $M$ in a similar way. 

Recall that we denote by $f_i\colon M_i\to M_i$, $i=1,2$, $C^r$ smooth expanding maps and we assume that $f_1$ and $f_2$ are conjugated by $h$, $h\circ f_1=f_2\circ h$. 
Given functions $\phi_i:M_i\to\R$, $i=1,2$, we say that $(f_1,\phi_1)$ is {\it equivalent} to $(f_2, \phi_2)$ and write 
$$(f_1,\phi_1)\sim (f_2,\phi_2)$$
 if there exists a function $u:M_1\to\R$ such that 
$$\phi_1-\phi_2\circ h=u-u\circ f_1$$
Then, by the Livshits theorem~\cite{L}, $(f_1,\phi_1)\sim (f_2,\phi_2)$ if and only if
for every periodic point  $x\in Fix(f_1^n)$
	$$
	\sum_{k=0}^{n-1}\phi_1(f_1^k(x))=\sum_{k=0}^{n-1}\phi_2(f_2^k(h(x)))
	$$
Further, if $\phi_i$ are $C^r$ smooth then the transfer function $u$ is also $C^r$ smooth.\footnote{The Livshits Theorem for expanding maps can be proved using the standard transitive point argument~\cite{L}. There is no loss of regularity in the bootstrap argument for the transfer function, see \eg~\cite{J}.} The following is our main technical result.
	
	 

\begin{theorem}
\label{rigexpthm} 
Assume that $M_i$, $i=1,2$, are closed manifolds homeomorphic to a nilmanifold. Let $f_i\colon M_i\to M_i$, $i=1,2$, be $C^r$ smooth, $r\ge 1$, expanding maps and assume they are conjugate via a homeomorphism $h\colon M_1\to M_2$. Then there exist manifolds $\bar M_i$ (which are homeomorphic to a nilmanifold) and $C^{r}$ fibrations  $p_i:M_i\to \bar M_i$, $i=1,2$, (whose fibers are homeomorphic to a nilmanifold) and $C^{r}$ expanding maps $\bar f_i:\bar M_i\to \bar M_i$, such that $f_i$ fibers over $\bar f_i$, \ie
$$
p_i\circ f_i=\bar f_i\circ p_i,\;\;\;\;\; i=1,2
$$
The conjugacy $h$ maps fibers to fibers, \ie 
$$
p_2\circ h=\bar h\circ p_1
$$
where the induced conjugacy $\bar h:\bar M_1\to \bar M_2$, $\bar h\circ\bar f_1=\bar f_2\circ \bar h$,   is a  $C^{r}$ diffeomorphism.

Further, the fibrations $p_i$, $i=1,2$, have the following property. If $\phi_i:M_i\to\R$, $i=1,2$, are $C^r$ smooth functions such that $(f_1,\phi_1)\sim(f_2,\phi_2)$ 
  then there exist $C^{r}$ functions $\bar\phi_i:\bar M_i\to\R$, $i=1,2$, such that $\phi_i$ is cohomologous to $\bar\phi_i\circ p_i$ over $f_i$, $i=1,2$, and 
   $$
   \bar\phi_2\circ \bar h=\bar\phi_1
   $$	 
\end{theorem}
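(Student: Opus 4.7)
The plan is to construct the fibrations $p_i$ out of the algebra of ``matching functions''. Let
\[
\mathcal{F}_2 \defin \{\phi_2 \in C^r(M_2) : \exists\, \phi_1 \in C^r(M_1) \text{ with } (f_1,\phi_1)\sim(f_2,\phi_2)\},
\]
and define $\mathcal{F}_1$ symmetrically. By the Livshits theorem (with the smooth bootstrap recorded in the footnote), the conjugacy $h$ induces a canonical bijection between $\mathcal{F}_1$ and $\mathcal{F}_2$ modulo smooth coboundaries over $f_i$. Using the transfer operator theory recalled in Section~3, for each $\phi_2\in\mathcal{F}_2$ I would select a canonical smooth representative $T\phi_2\in C^r(M_1)$ in the $f_1$-cohomology class of $\phi_2\circ h$; this gives a well-defined linear map $T:\mathcal{F}_2\to C^r(M_1)$ intertwining $f_1^*$ and $f_2^*$ modulo coboundaries.

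Next, introduce equivalence relations $\sim_i$ on $M_i$: declare $x\sim_2 y$ iff $\phi_2(x)=\phi_2(y)$ for all $\phi_2\in\mathcal{F}_2$, and $x\sim_1 y$ iff $(T\phi_2)(x)=(T\phi_2)(y)$ for all $\phi_2\in\mathcal{F}_2$. Set $\bar M_i\defin M_i/\sim_i$ with quotient projection $p_i$. Because $\mathcal{F}_i$ is $f_i^*$-invariant (modulo coboundaries that disappear after the canonical choice), $f_i$ descends to a map $\bar f_i$, and because $T$ intertwines $h$ with the identifications of the cohomology classes, $h$ descends to a homeomorphism $\bar h$ satisfying $\bar f_2\circ\bar h=\bar h\circ\bar f_1$ and, tautologically, $\bar\phi_2\circ\bar h=\bar\phi_1$.

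The main obstacle is proving that $\bar M_i$ is a smooth nilmanifold, $p_i$ is a $C^r$ fibration with nilmanifold fibers, and $\bar h$ is a $C^r$ diffeomorphism. In the torus case (Section~4) one can use Fourier analysis: modulo coboundaries, $\mathcal{F}_2$ is spanned by characters of $\T^d$, and the orbit structure of these characters under $f_2^*$ picks out an $f_2^*$-invariant rational subspace of the dual lattice; its annihilator is a linear subtorus that will serve as the fibre of $p_2$, and everything is automatically smooth. In the general case (Section~5) the analogous step must be implemented using the nilpotent Lie algebra: one must identify the $\sim_i$-fibres with cosets of a closed $f_i$-invariant smooth subset coming from an algebraic sub-object of the nilmanifold, which is the hardest part of the argument. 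The smoothness of $\bar h$ would then follow from a Takens-style embedding: sufficiently many $\bar\phi_2$'s smoothly embed $\bar M_2$ into Euclidean space, so the relation $\bar\phi_1=\bar\phi_2\circ\bar h$ forces $\bar h$ to be $C^r$.

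Finally, the cohomological conclusion falls out of the construction: given any pair $(f_1,\phi_1)\sim(f_2,\phi_2)$ with $\phi_i\in C^r$, the function $\phi_2$ lies in $\mathcal{F}_2$, so it descends (by definition of $\sim_2$) to $\bar\phi_2\in C^r(\bar M_2)$ with $\bar\phi_2\circ p_2$ cohomologous to $\phi_2$ over $f_2$. Setting $\bar\phi_1\defin\bar\phi_2\circ\bar h\in C^r(\bar M_1)$, the intertwining property of $T$ gives that $\bar\phi_1\circ p_1$ is cohomologous to $T\phi_2$ and hence, by choice of $T\phi_2$, to $\phi_1$ over $f_1$, which is exactly the required statement.
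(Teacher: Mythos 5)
Your overall strategy --- build the fibration from an algebra of matching functions and use the transfer operator to normalize cohomology classes --- is the same as the paper's, but as written there are two genuine gaps.

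First, your equivalence relation $\sim_2$ is defined by the raw functions $\phi_2\in\mathcal F_2$, and $\mathcal F_2$ contains every $C^r$ coboundary $u-u\circ f_2$ (take $\phi_1=0$ up to the transfer function $u\circ h$). Coboundaries separate points of $M_2$ (take $u$ supported near $x$ and away from $f_2(x),y,f_2(y)$), so your $\sim_2$ is trivial and $\bar M_2=M_2$ always --- which contradicts, e.g., Example~6.1. The fix, and what the paper actually does, is to work with pairs that match \emph{exactly} through $h$, i.e.\ $V=\{(\psi_1,\psi_2):\psi_1=\psi_2\circ h\}$, and take common level sets of $V_i$ on each side. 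Your canonical representative $T\phi_2$ is the right idea for the second half of the theorem, but it must be applied symmetrically: one normalizes $\phi_i$ to $\hat\phi_i$ with $\cL_{\hat\phi_i,f_i}1=1$ on \emph{both} manifolds, and then the nontrivial point --- not a tautology --- is that $\hat\phi_2\circ h=\hat\phi_1$ exactly; this follows from the identity $(\cL_{\hat\phi_2,f_2}v)\circ h=\cL_{\hat\phi_2\circ h,f_1}(v\circ h)$ together with the uniqueness in the Krzy\.{z}ewski--Sacksteder theorem. Only then does $(\hat\phi_1,\hat\phi_2)$ land in $V$ and descend to the base.

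Second, the part you defer --- that the common level sets form a $C^r$ fibration with nilmanifold fibers over a nilmanifold base --- is the bulk of the proof and your proposed route does not engage with it. Fourier analysis is unavailable: the matching condition couples $\psi_1$ and $\psi_2$ through the merely H\"older homeomorphism $h$, so $\mathcal F_2$ is not spanned by characters modulo coboundaries. The paper instead (i) forms the distributions $E_i=\bigcap_{\psi\in V_i}\ker d\psi$, proves constant rank using invariance under the Deck group of $f_i^n$ and density of $f_i^{-n}(x)$, and integrates them to a $C^r$ foliation via the implicit function theorem; (ii) transports the foliation to the algebraic model, where invariance under the dense set $A^{-\infty}\Gamma$ forces the leaf through the identity to be a closed subgroup (Cartan); and (iii) in the genuinely nilpotent case runs an induction over the lower central series, invokes Ratner/distal-action theory to get compactness of leaves, and a countability argument on stabilizer lattices to get normality of that subgroup. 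Without some version of these steps the quotient $\bar M_i$ need not even be Hausdorff, let alone a smooth nilmanifold, so this cannot be treated as a routine verification.
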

All manifolds in the above theorem, including the fibers of the fibrations, are connected. All manifolds are homeomorphic to nilmanifolds but could carry exotic smooth structure.

At this point we recommend that the reader looks at Example~6.1 to better understand the statement of the above theorem.

\begin{remark}
Manifold $\bar M_1$ may be equal to $M_1$ or may be a point or some dimension in between. In the first case we obtain that $f_1$ and $f_2$ are $C^r$ smoothly conjugate and in the second case we obtain that the functions $\phi_1$ and $\phi_2$ are cohomologous to a constant. 

Also notice that the regularity of the $f_i$'s and $\phi_i$'s may be different to start with. Then naturally one takes $r$ to be the minimal  value. Moreover, for a given pair of $f_i$, $i=1,2$, but different choices of $r\geq 1$, the resulting fibrations $p_i$, $i=1,2$, may, in fact, depend on $r$. 
\end{remark}

\begin{remark}
If one does not assume that $M_i$ are homeomorphic to a nilmanold then, instead of fibrations, the construction in the proof of Theorem~\ref{rigexpthm} yields compact foliations $\cF_i$ \ie foliations with all leaves compact. Further, by improving the argument used to show that the leaves of $\cF_i$ are compact, one can check that these foliations are generalized Seifert fibrations. The argument for compactness and the Seifert property of the foliation is independent of classification of expanding maps. Klein bottle Example~\ref{seifertfibration} shows that such foliations, indeed, can have exceptional leaves on infranilmanifolds, that is, they are not necessarily locally trivial fibrations. Hence the assumption that $M_i$ are homeomorphic to nilmanifolds is a necessary one. However, in practice, this assumption is not a big restriction. Indeed, by classification, any manifold which supports an expanding map is homeomorphic to an infranilmanifold. Hence, one can always lift given expanding maps to finite nilmanifold covers and study the problem on the cover.
\end{remark}

\begin{remark}
It will become clear from the proof of Theorem~\ref{rigexpthm} that the fibrations $p_i$ are uniquely determined by $f_i$, $h$, and $r$. However, if one does not require the latter property in the statement, \ie that ``matching'' functions $\phi_i$ are cohomologous to $\bar\phi_i\circ p_i$ then the choice of fibrations, in general, is not unique. For example, there is always the trivial fibration whose fibers are points. {
 In general there are finitely or infinitely many distinct smooth fibrations for a given expanding map and the maximal number of possible fibrations occurs when $h$ is smooth. This maximal number of fibrations is determined by the linearization of $f_i$ (see also Remark~\ref{remark62}).}   There is also a naturally defined partial order on the set of fibrations with the trivial one being subordinate to any other fibration and the one given by Theorem~\ref{rigexpthm} being the maximal one.
\end{remark}

\begin{remark} Recall that there exist expanding maps on exotic nilmanifolds, \ie manifolds homeomorphic but not diffeomorphic (or even not $PL$-homeomorphic) to nilmanifolds~\cite{FJ,FG}. Our theorem applies to such examples. Moreover, by using Gromoll's filtration and following the strategy of~\cite{FG2},  one can construct expanding map $f_1\colon M_1\to M_1$, on a nilmanifold $M_1$ and an expanding map $f_2\colon M_2\to M_2$ on an exotic nilmanifold $M_2$ in such a way that the fibrations $p_i\colon M_i\to\bar M_i$ are non-trivial, \ie $\dim M_i>\dim\bar M_i>0$. Also note that our theorem applies in the case when both $M_1$  and $M_2$ are exotic. We elaborate on this remark in Example~\ref{ex_exotic}.
\end{remark}


A linear expanding endomorphism $L$ of a $d$-dimensional torus $M$ is called {\it irreducible} if the characteristic polynomial of the integer matrix defining $L$ is irreducible over $\Z$; equivalently $L$ does not have non-trivial invariant rational subspaces. Recall that any expanding map $f\colon M\to M$ is conjugate to an expanding endomorphism $L$. We will say $f$ is {\it irreducible} if $L$ is irreducible.

\begin{corollary}
\label{cor1}
	Let $M_i$ be manifolds homeomorphic to the $d$-dimensional torus. Assume that $f_i\colon M_i\to M_i$ are $C^{r+1}$ smooth, $r\geq 1$, expanding maps. Assume that they are conjugate via $h$. Also assume that $f_1$ is irreducible and that the entropy maximizing measure for $f_1$ is not absolutely continuous with respect to Lebesgue measure. If $\textup{Jac} (f_1^n)(x)=\textup{Jac} (f_2^n)(h(x))$ for every $x\in Fix(f_1^n)$  and every $n$ then $h$ is a $C^r$ diffeomorphism.
\end{corollary}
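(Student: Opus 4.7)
The plan is to apply Theorem~\ref{rigexpthm} with potentials $\phi_i=\log\textup{Jac}(f_i)$, which are $C^r$ because $f_i$ is $C^{r+1}$. Taking logarithms in the Jacobian hypothesis turns it into the Livshits cocycle condition
$$
\sum_{k=0}^{n-1}\phi_1(f_1^k x)=\sum_{k=0}^{n-1}\phi_2(f_2^k h(x))\qquad \text{for all }x\in\textup{Fix}(f_1^n),
$$
so $(f_1,\phi_1)\sim(f_2,\phi_2)$. The theorem provides $C^r$ fibrations $p_i\colon M_i\to\bar M_i$, $C^r$ expanding factors $\bar f_i$, a $C^r$ conjugacy $\bar h$, and $C^r$ functions $\bar\phi_i$ with $\phi_i$ cohomologous to $\bar\phi_i\circ p_i$ over $f_i$. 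It suffices to prove that $p_1$ (and hence $p_2$) is a diffeomorphism, for then $h=p_2^{-1}\circ\bar h\circ p_1$ is $C^r$.

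To force triviality of $p_1$ I use irreducibility. Since $M_1$ is homeomorphic to a torus, the homotopy long exact sequence of $p_1$, whose base and fiber are aspherical nilmanifolds, collapses to
$$
0\to\pi_1(F)\to\pi_1(M_1)\to\pi_1(\bar M_1)\to 0,
$$
in which $\pi_1(F)\subset\Z^d$ is $f_{1*}$-invariant and saturated, the latter because its quotient, as the fundamental group of a nilmanifold, is torsion-free. By the Shub--Franks classification, $f_1$ is topologically conjugate to its linearization $L_1$, so $f_{1*}$ acts on $\pi_1(M_1)\cong\Z^d$ as $L_1$. Irreducibility of $L_1$ means the only $L_1$-invariant $\Q$-subspaces of $\Q^d$ are $\{0\}$ and $\Q^d$; combined with saturation this forces $\pi_1(F)$ to be either $\{0\}$ or $\Z^d$. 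In the first case $F$ is a simply connected closed nilmanifold and thus a point, so $p_1$ is a $C^r$ diffeomorphism; the second case would have $\bar M_1$ a point.

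To rule out the second alternative, observe that if $\bar M_1$ is a point then $\bar\phi_1$ is constant, so $\log\textup{Jac}(f_1)$ is cohomologous to a constant over $f_1$. By the thermodynamic formalism for $C^2$ expanding maps, the equilibrium state of $-\log\textup{Jac}(f_1)$ is the unique absolutely continuous invariant measure, and it coincides with the measure of maximal entropy if and only if $\log\textup{Jac}(f_1)$ is cohomologous to a constant. This contradicts the hypothesis on the entropy maximizing measure. Hence $p_1$ is a $C^r$ diffeomorphism; since $h$ is a homeomorphism sending fibers of $p_1$ to fibers of $p_2$ and those fibers are now points, $p_2$ is also a $C^r$ diffeomorphism, completing the argument.

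The main obstacle I anticipate is the algebraic step linking irreducibility of $L_1$ to triviality of the fibration: this requires the correct identification of $f_{1*}$ on $\pi_1(M_1)$ with the linearization $L_1$ under the Shub--Franks conjugacy, and the saturation of $\pi_1(F)$ inside $\Z^d$ (so that $L_1$-invariance at the rational level upgrades to the statement about integer sublattices). The thermodynamic-formalism ingredient used at the end is classical.
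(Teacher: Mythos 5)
Your proposal is correct and follows essentially the same route as the paper: apply Theorem~\ref{rigexpthm} to the log-Jacobians, use the short exact sequence $0\to\pi_1(F)\to\pi_1(M_1)\to\pi_1(\bar M_1)\to 0$ together with irreducibility to force the fibration to be either trivial or all of $M_1$, and rule out the latter via the thermodynamic identification of the equilibrium state of $-\log\textup{Jac}(f_1)$ with the a.c.i.m. The only cosmetic point is that you should write $\log|\textup{Jac}(f_i)|$ (or pass to an iterate, as the paper does) so the potential is defined when $f_i$ is orientation-reversing.
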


We make four remarks pertaining this corollary.
\begin{remark}The condition on the measure of maximal entropy can be detected from a pair of periodic points. Hence the space of expanding maps which satisfy this assumption is $C^{r+1}$ dense and $C^1$ open in the space of expanding maps.
\end{remark}

\begin{remark}
The analogue of Corollary~\ref{cor1} for non-abelian nilmanifolds is vacuous. This is because every linear expanding maps on a nilmanifolds leaves invariant the fibration given by the center subgroup of the corresponding nilpotent Lie group. Indeed, the proof of Corollary~\ref{cor1} relies on absence of such fibrations (which is guaranteed by irreducibility in the toral case).
\end{remark}
\begin{remark}
\label{rem_irr}
Recall that an {\it infratorus} $M$ is a closed manifold covered by the torus $\T^d$. The Deck transformations of the covering $\T^d\to M$ have the form $x\mapsto Qx+v$ and the linear parts $Q$ form so called {\it holonomy group of $M$}.  We can define an expanding map $f\colon M\to M$ to be irreducible if its' lift to $\T^d$ is irreducible. Then Corollary~\ref{cor1} holds for such irreducible expanding maps of infratori by first passing to the torus cover and then arguing in the same way. 

However, the supply of irreducible examples of expanding endomorphisms of infratori which are not tori is rather limited. Notice that  any $Q\neq Id$ from the holonomy group has $1$ for an eigenvalue. Indeed otherwise corresponding affine map of the torus $x\mapsto Qx+v$ would have a fixed point by the Lefschetz formula. Further $L$ acts on the holonomy group by conjugation. Hence, because the holonomy group is finite, for a sufficiently large $k$, $L^k$ and $Q$ commute and, hence, $L^k$ leaves invariant the non-trivial rational subspace --- the eigenspace space of eigenvalue $1$ for $Q$. Hence all irreducible examples must become reducible after passing to a finite power. Still examples like that exist and we present one such example as Example~\ref{infratorusexample}.
\end{remark}

\begin{remark}
Define the critical regularity $r_0$ by
$$
r_0(f_1)=\min_{n\geq 1}\max_{x\in M_1}\frac{\log\|D_xf^n_1\|}{\log m(D_xf^n_1)}
$$
where $m$ is the conorm. Then by the argument of de la Llave~\cite[Section 6]{dlL} one can rectify the loss of one derivative and bootstrap the regularity of the conjugacy. That is if $r> r_0$ then the $C^{r}$ conjugacy given by Corollary~\ref{cor1} is, in fact, $C^{r+1}$. Same observation applies to other statements in this section. 

In fact $r_0(f_1)$ admits an alternative expression
$$
r_0(f_1)=\max_{p\in Per(f_1)}\frac{\lambda^+(p)}{\lambda^-(p)}
$$
 where $\lambda^{\pm}(p)$ is respectively the largest/smallest Lyapunov exponent for $f_1$ at $p$. Therefore  $r_0(f_1)$ can be computed directly from Lyapunov exponents along periodic orbits. 
To see that the two formulae give the same value $r_0(f_1)$ one can pass to the invertible solenoid diffeomorphism and apply the approximation result~\cite{WW}.

Notice also that a priori it does not follow from the hypothesis of Corollary~\ref{cor1} that $r_0(f_1)=r_0(f_2)$, however a posteriori one obtains this equality from smoothness of the conjugacy.
\end{remark}

We say that an expanding map $f\colon M\to M$ is {\it very non-algebraic} if for every $\lambda\in\Z$ and for every $m$, $1\leq m\leq \dim(M)$, there exists a periodic point $x$ of period $n$ such that $\lambda^n$ is not an eigenvalue of the $m$-fold exterior power
$$
\bigwedge^m D_xf^n
$$
Notice that this condition is open and dense.
\begin{corollary}\label{rigexpcor}
Assume that $f_i\colon M_i\to M_i$ are $C^{r+1}$ smooth, $r\ge 1$, expanding maps. Assume that they are topologically conjugate and also assume that $f_1:M_1\to M_1$ is very non-algebraic. Furthermore, assume that for every periodic point $x$ of $f_1$ of period $n$ 
$$\textup{Jac} (f_1^n)(x)=\textup{Jac} (f_2^n)(h(x))$$ 
Then $h$ is a $C^r$ diffeomorphism.
\end{corollary}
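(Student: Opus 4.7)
The plan is to apply Theorem~\ref{rigexpthm} with potentials $\phi_i:=\log\textup{Jac}(f_i)$, which lie in $C^r$ because $f_i\in C^{r+1}$. The assumed matching of Jacobians along periodic orbits is exactly Livshits' criterion for $(f_1,\phi_1)\sim(f_2,\phi_2)$. After passing to finite nilmanifold covers if needed, the theorem supplies $C^r$ fibrations $p_i\colon M_i\to\bar M_i$, expanding factor maps $\bar f_i$, a $C^r$ conjugacy $\bar h$, and $C^r$ functions $\bar\phi_i$ with $\log\textup{Jac}(f_i)-\bar\phi_i\circ p_i$ a $C^r$ coboundary over $f_i$. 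It suffices to show that the fibres of $p_1$ are single points; then $p_1,p_2$ are $C^r$ diffeomorphisms and $h=p_2^{-1}\circ\bar h\circ p_1$ is $C^r$.

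Suppose for contradiction that the fibre dimension is $k\ge 1$. Since $p_1$ is a smooth fibration over the connected base $\bar M_1$ and $f_1$ is a covering map, the fibrewise covering degree $e:=\deg(f_1|_{F_{\bar x_0}}\colon F_{\bar x_0}\to F_{\bar f_1(\bar x_0)})$ is a positive integer independent of $\bar x_0\in\bar M_1$. Fix a $\bar f_1$-periodic point $\bar x$ of period $m$, set $F:=p_1^{-1}(\bar x)$, and let $g:=f_1^m|_F$; then $g$ is an expanding endomorphism of $F$ of degree $e^m$.

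Choosing a smooth volume form on $M_1$ of the form $\rho\cdot\textup{vol}_F\wedge p_1^{*}\textup{vol}_{\bar M_1}$ with $\rho>0$ smooth yields the factorisation $\log\textup{Jac}(f_1)=\log\textup{Jac}_F(f_1|_{\text{fibre}})+\log\textup{Jac}(\bar f_1)\circ p_1+\log(\rho\circ f_1/\rho)$; combined with the cohomology from Theorem~\ref{rigexpthm}, this shows that $\log\textup{Jac}_F(g)$ is cohomologous on $F$ to a constant $c$. Integrating against the measure of maximal entropy $\mu_0$ of $g$ and using Ruelle's inequality gives $\log\deg g=h_{\mu_0}(g)\le\int\log\textup{Jac}_F(g)\,d\mu_0=c$, while for the SRB measure $\mu_1$ of $g$ Rokhlin's formula and the variational principle give $c=\int\log\textup{Jac}_F(g)\,d\mu_1=h_{\mu_1}(g)\le h_{\mathrm{top}}(g)=\log\deg g$; hence $c=\log\deg g=m\log e$ and $\textup{Jac}_F(g^{n/m})(y)=e^n$ for every $f_1$-periodic $y\in F$ of period $n$.

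Since $T_yF$ is a $k$-dimensional $Df_1^n(y)$-invariant subspace on which $Df_1^n(y)$ has determinant of modulus $e^n$, one of $\pm e^n$ is an eigenvalue of $\bigwedge^k Df_1^n(y)$. Every $f_1$-periodic point lies in a fibre over a $\bar f_1$-periodic point, so this holds for every $f_1$-periodic $y$ of every period $n$. A short orientation analysis of the line bundle $\bigwedge^k TF$ (trivialising it over a finite cover if necessary) then identifies a specific $\lambda\in\{\pm e\}\subset\Z$ for which $\lambda^n$ is an eigenvalue of $\bigwedge^k Df_1^n(y)$ for every periodic $y$ of every period $n$, contradicting the very non-algebraic hypothesis with exterior-power index $k$. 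Hence $k=0$ and $h$ is a $C^r$ diffeomorphism. The main technical obstacle is verifying the clean fibre-over-base factorisation of the Jacobian as a genuine coboundary and pinning down the constant $c$ as $\log\deg g$; the sign/orientation bookkeeping at the end is a minor technicality.
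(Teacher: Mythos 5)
Your proposal follows the same overall strategy as the paper's proof of Corollary~\ref{rigexpcor}: apply Theorem~\ref{rigexpthm} at regularity $r$ to $\phi_i=\log\textup{Jac}(f_i)$, assume the fibre dimension $k$ is positive, show that the fibrewise Jacobian of the return map at every periodic point equals $(\pm\,\text{degree})^{n}$, and contradict the very non-algebraic hypothesis via the invariant subspace $\ker Dp_1$ and its $k$-th exterior power. The one sub-step you execute differently is the identification of the constant $c$ with $\log$ of the fibrewise degree. The paper does this elementarily: it writes $\log J f_i=\log(Jf_i|_{\ker Dp_i})+\log J\bar f_i\circ p_i$ via an adapted (upper-triangular) metric, concludes that $\log(Jf_1|_{\ker Dp_1})$ is cohomologous to a function constant on fibres, then \emph{renormalizes the fibre volume form} by $e^{u_1}$ so that the fibrewise Jacobian becomes \emph{exactly} constant on each fibre, and finally integrates over a fibre to read off that this constant is the degree $\boldsymbol d$. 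You instead restrict to a periodic fibre and run an entropy sandwich (measure of maximal entropy plus Ruelle's inequality on one side, acip plus Rokhlin's formula on the other).

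Two soft spots in your version. First, the entropy argument needs an invariant measure for $g=f_1^m|_F$ satisfying Pesin's equality; for a $C^{1+\alpha}$ expanding map this is the Krzy\.{z}ewski--Sacksteder acip, but at the bottom regularity $r=1$ the fibration is only $C^1$, so $F$ is a $C^1$ submanifold, $g$ is merely $C^1$, and the existence of an acip (hence Rokhlin's formula) is not guaranteed. The paper's integration trick avoids this entirely and works for all $r\ge 1$; if you want to keep the entropy route you should at least restrict it to $r>1$ or justify the acip from the fact that $\log\textup{Jac}_F(g)$ is a continuous coboundary plus a constant. Second, your suggestion to trivialise $\bigwedge^k TF$ ``over a finite cover if necessary'' is dangerous: the very non-algebraic condition is a statement about periodic points of $f_1$ itself and is not obviously inherited by lifts to covers (the paper explicitly warns that one cannot even pass to finite iterates here). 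Fortunately the cover is never needed: the fibres and the base are nilmanifolds, hence orientable, and $\bigwedge^{\dim M}TM_1\cong\bigwedge^k\ker Dp_1\otimes p_1^*\bigwedge^{d-k}T\bar M_1$ shows the vertical bundle is coherently orientable, so the sign $\varepsilon=\operatorname{sign}\det(Df_1|_{\ker Dp_1})$ is globally constant and $\lambda=\varepsilon\boldsymbol d\in\Z$ works for all periodic points and all periods. With these two points repaired your argument is complete and matches the paper's conclusion.
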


\begin{remark} It will be clear from the proof that the very non-algebraic assumption can be weakened to asking that for $m=1,2,\ldots, \dim(M)$ if $\lambda\in\Z$ appears  in the spectrum of 
$$
\bigwedge^m Df_{1*}
$$
then $\lambda^n$ does not appear in the spectrum of
$$
\bigwedge^m D_xf_1^n
$$
for some periodic point $x$, $x=f_1^nx$. Here $f_{1*}$ stands for the linear expanding automorphism induced by $f_1$ on the nilpotent Lie group and $Df_{1*}$ is the corresponding Lie algebra automorphism.

Note that the very non-algebraic condition prevents $f_1$ from being linear.
\end{remark}


Given two linear maps $D_i\colon\R^d\to\R^d$, $i=1,2$, we say that $D_1$ and $D_2$ have {\it disjoint spectrum} if for every $m=1,\ldots d$, the $m$-th exterior powers $\wedge^m D_1$ and $\wedge^m D_2$ do not share any real eigenvalues. Given two periodic points $x=f^k(x)$ and $y=f^l(y)$ we say that they have {\it disjoint spectrum} if the differentials $D_xf^{kl}$ and $D_yf^{kl}$ have disjoint spectrum.

\begin{corollary}\label{rigcor2points}
 Assume that $f_i\colon M_i\to M_i$ are $C^{r+1}$ smooth, $r\ge 1$, expanding maps. Assume that they are conjugate and also assume that there exists $f_1$-periodic points $x$ and $y$ which have disjoint spectrum. If for every periodic point $x$ of $f_1$ of period $n$ the Jacobians $\textup{Jac}(f_1^n)(x)$ and $\textup{Jac}(f_2^n)(h(x))$ coincide then $f_1$ is $C^{r}$ conjugate to $f_2$.
\end{corollary}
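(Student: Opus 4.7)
The plan is to deduce this corollary from Theorem~\ref{rigexpthm} applied to the log-Jacobian cocycle $\phi_i := \log|\det Df_i|$. The hypothesis of matching return-map Jacobians at periodic points, together with the Livshits theorem recalled in the introduction, yields $(f_1,\phi_1)\sim(f_2,\phi_2)$. Theorem~\ref{rigexpthm} therefore provides $C^r$ fibrations $p_i\colon M_i\to\bar M_i$, a $C^r$ diffeomorphism $\bar h\colon\bar M_1\to\bar M_2$ intertwining the induced expanding maps $\bar f_i$, and $C^r$ functions $\bar\phi_i$ on $\bar M_i$ with $\phi_i\sim\bar\phi_i\circ p_i$ and $\bar\phi_1=\bar\phi_2\circ\bar h$. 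If the fibrations $p_i$ are diffeomorphisms (that is, have point fibers), then $h = p_2^{-1}\circ\bar h\circ p_1$ is $C^r$, which is the conclusion. So it suffices to rule out nontrivial fibers.

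I argue by contradiction. Assume the fiber of $p_1$ has dimension $s\ge 1$; set $\bar d = d-s = \dim\bar M_1$ and $\bar x = p_1(x),\,\bar y = p_1(y)$. A first observation is that disjoint spectrum descends to $(\bar x,\bar y)$: since $Dp_1$ intertwines $Df_1^{kl}(z)$ with $D\bar f_1^{kl}(\bar z)$ modulo the invariant subbundle $\ker Dp_1$, the eigenvalues of $\wedge^m D\bar f_1^{kl}(\bar z)$ form a sub-multiset of those of $\wedge^m Df_1^{kl}(z)$ for every $m$. In the subcase $\bar d = 0$, $\bar M_1$ is a point and $\bar\phi_1\equiv c$ is constant, so the Livshits equation gives $\log|\det Df_1^n(z)| = nc$ at every $n$-periodic $z$. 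Taking $z\in\{x,y\}$ at period $kl$ yields $|\det Df_1^{kl}(x)| = |\det Df_1^{kl}(y)| = e^{klc}$, a common real eigenvalue of $\wedge^d Df_1^{kl}$ at $x$ and $y$, contradicting disjoint spectrum at $m=d$.

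The core difficulty is to rule out the intermediate subcase $0<\bar d<d$. I expect to argue by induction on $d = \dim M$. The strategy is to use the $C^r$ smoothness of $p_1$ to pass to the fiber dynamics: on an invariant fiber $F$ over an $\bar f_1^a$-fixed point, $f_1^a|_F$ and $f_2^a|_{F'}$ are conjugate via $h|_F$ on a nilmanifold of dimension $s<d$, and their fiber Jacobians $|\det Df_i^a|_{E_i}|$ still match at periodic points, since the smoothness of $\bar h$ gives matching quotient Jacobians and hence the full Jacobian matching descends to fiber matching via Livshits. The main obstacle is that the given disjoint-spectrum pair $(x,y)$ does not in general lie on a common fiber of $p_1$; indeed the cohomology equation for $\bar\phi_1$ combined with disjoint spectrum at $m=d$ forces $\bar x\neq \bar y$. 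One must therefore locate or construct a pair of periodic points on a common fiber whose spectrum (restricted to the fiber dynamics) is still disjoint, in order for the inductive hypothesis to apply. Once produced, the induction yields a $C^r$ conjugacy along the fiber, which combines with the smoothness of $\bar h$ to contradict $s\ge 1$.
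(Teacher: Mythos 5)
Your reduction to Theorem~\ref{rigexpthm} applied to $\phi_i=\log|\det Df_i|$ and your disposal of the case $\dim\bar M_1=0$ are fine (modulo a sign fuss: the eigenvalue of $\wedge^d Df_1^{kl}$ is $\det Df_1^{kl}$, not its absolute value, but this is cured by doubling the period). The problem is the intermediate case $0<\dim\bar M_1<\dim M_1$, which is the entire content of the corollary and which you leave open. The inductive route you sketch requires producing two periodic points lying on a \emph{common} fiber of $p_1$ whose fiber-restricted differentials have disjoint spectrum. Nothing in the hypotheses supplies such a pair: disjoint spectrum is assumed only for the single pair $(x,y)$, which, as you note, cannot lie on one fiber, and there is no mechanism for manufacturing a new disjoint-spectrum pair inside a single fiber. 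So the induction cannot be started, and this is not a routine detail to be filled in later.

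The paper takes a different and shorter route that avoids this entirely. It first observes that the existence of one pair of periodic points with disjoint spectrum makes $f_1$ \emph{very non-algebraic}: for any $\lambda\in\Z$ and any $m$, the number $\lambda^{kl}$ can be a real eigenvalue of $\wedge^m D_zf_1^{kl}$ for at most one of $z=x,y$. Then Corollary~\ref{rigexpcor} applies verbatim. In the proof of that corollary the positive-dimensional-fiber case is excluded not by induction on dimension but by a normalization and averaging argument: by the matching property of Theorem~\ref{rigexpthm} (applied with regularity $r$, since the Jacobian loses one derivative), $\log Jf_1|_{\ker Dp_1}$ is cohomologous to a function constant on fibers; after modifying the leafwise volume form by the transfer function, the fiber Jacobian becomes constant along each fiber, and integrating over a fiber identifies that constant with the degree $\boldsymbol d>1$ of the induced map between fibers. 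Hence $Jf_1^k|_{\ker Dp_1}(z)=\boldsymbol d^{\,k}$ at \emph{every} periodic point $z=f_1^kz$, so $\pm\boldsymbol d^{\,kl}$ is a real eigenvalue of $\wedge^m Df_1^{kl}$ at both $x$ and $y$ (with $m=\dim\ker Dp_1$), contradicting disjoint spectrum. Replacing your inductive step by this degree argument closes the gap.
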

Corollary \ref{rigcor2points} follows directly from Corollary \ref{rigexpcor} since the property of having two periodic points with disjoint spectrum directly implies the very non-algebraic property.

Recall that a homeomorphism is called absolutely continuous if it send the Lebesgue measure to a measure which is absolutely continuous with respect to Lebesgue measure.

\begin{corollary}\label{shubsullcor}
	Let $r\ge 1$. If two $C^{r+1}$ very non-algebraic expanding maps which are conjugate via an absolutely continuous homeomorphism $h$ then $h$ is, in fact, $C^r$ smooth.
	 \end{corollary}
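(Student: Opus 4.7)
\emph{Plan.} The goal is to reduce Corollary \ref{shubsullcor} to Corollary \ref{rigexpcor}: the very non-algebraic hypothesis is already in place, so it suffices to extract from absolute continuity of $h$ the matching of Jacobian data on every periodic orbit, i.e.\ $\textup{Jac}(f_1^n)(x)=\textup{Jac}(f_2^n)(h(x))$ for all $x\in\textup{Fix}(f_1^n)$. Since $f_i$ is $C^{r+1}$ with $r\ge 1$, each $f_i$ has a unique absolutely continuous invariant probability measure $\mu_{f_i}$ with H\"older density $\rho_i$, which is strictly positive and bounded. Writing $\mu_{f_1}=\rho_1\,d\textup{Leb}_1$ and using $h_*\textup{Leb}_1\ll\textup{Leb}_2$, one gets $h_*\mu_{f_1}\le\|\rho_1\|_\infty\,h_*\textup{Leb}_1\ll\textup{Leb}_2$. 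Hence $h_*\mu_{f_1}$ is an absolutely continuous $f_2$-invariant probability measure, and by uniqueness $h_*\mu_{f_1}=\mu_{f_2}$.

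Next I translate this measure-theoretic identity into a cohomology statement. Set $\phi_i\defin-\log\textup{Jac}(f_i)$, which is H\"older on $M_i$. Then $\mu_{f_i}$ is the unique equilibrium state of $(f_i,\phi_i)$, and the equality $h_*\mu_{f_1}=\mu_{f_2}$ says that $\mu_{f_1}$ is the equilibrium state of $(f_1,\phi_2\circ h)$ as well. Two H\"older potentials on a topologically mixing expanding map share an equilibrium state exactly when they differ by a coboundary plus a constant, so there exist a H\"older $u\colon M_1\to\R$ and $c\in\R$ with
\[
\phi_1-\phi_2\circ h = c + u - u\circ f_1.
\]
To show $c=0$, integrate against the $f_1$-invariant measure $\mu_{f_1}$ and invoke Rokhlin's entropy formula $h_{\mu_{f_i}}(f_i)=\int\log\textup{Jac}(f_i)\,d\mu_{f_i}$. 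Since $h$ is a topological conjugacy pushing $\mu_{f_1}$ to $\mu_{f_2}$, the two metric entropies agree, so $\int\phi_1\,d\mu_{f_1}=\int\phi_2\,d\mu_{f_2}=\int\phi_2\circ h\,d\mu_{f_1}$, forcing $c=0$. Summing the resulting coboundary equation along any $f_1$-orbit of period $n$ yields the desired matching of Jacobian data, and Corollary \ref{rigexpcor} then upgrades $h$ to a $C^r$ diffeomorphism.

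The only delicate point is that the hypothesis of absolute continuity is one-sided -- only $h_*\textup{Leb}_1\ll\textup{Leb}_2$ is assumed, not the analogous statement for $h^{-1}$ -- so a direct comparison of Lebesgue classes on the two sides is not available. The boundedness of the SRB density $\rho_1$, which requires $f_1\in C^2$ and is the reason the hypothesis $r+1\ge 2$ appears, is exactly what neutralizes this asymmetry in the first step; the rest is a standard transfer-operator/equilibrium-state argument.
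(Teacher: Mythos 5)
Your proposal is correct and follows essentially the same route as the paper: push the absolutely continuous invariant measure of $f_1$ forward by $h$, identify it with that of $f_2$ by uniqueness of the a.c.i.p., deduce the matching of Jacobian data at periodic points, and invoke Corollary~\ref{rigexpcor}. The paper states this in two sentences; your write-up merely supplies the standard equilibrium-state/Rokhlin details (and correctly notes that one-sided absolute continuity suffices, though boundedness of $\rho_1$ is not actually needed for the first step -- $\mu_{f_1}\ll\textup{Leb}_1$ alone gives $h_*\mu_{f_1}\ll\textup{Leb}_2$).
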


Corollary \ref{shubsullcor} follows directly from Corollary \ref{rigexpcor}. Indeed, by ergodicity $h$ must map the smooth absolutely continuous measure of $f_1$ to the smooth absolutely continuous measure for $f_2$. It follows that the Jacobians at corresponding periodic points must be equal.


\section{Krzy\.{z}ewski-Sacksteder Theorem for expanding maps}\label{sectionKS}

Given a $C^r$, $r\ge 1$, expanding map $f\colon M\to M$ and a $C^r$ potential $\phi\colon M\to \R$ the {\it transfer operator} $\cL_{\phi,f}\colon C^{k}(M)\to C^{k}(M)$ given by
$$
\cL_{\phi,f}u(x)=\sum_{y\in f^{-1}x}e^{\phi(y)}u(y)
$$
is defined for $C^{k}$ functions $u$, where $k\le r$.
When no confusion is possible we abbreviate the notation for the transfer operator to $\cL_\phi$.

\begin{theorem}[Ruelle-Perron-Frobenius/Krzy\.{z}ewski-Sacksteder]
	\label{thm_SK}
	Let $f\colon M\to M$ be a $C^r$, $r\geq 1$, expanding map and let $\phi\colon M\to \R$ be a $C^r$ potential; let $0\le k\le r$. Then the transfer operator $\cL_\phi\colon C^k(M)\to C^k(M)$ has a unique maximal positive eigenvalue $e^c$
	$$
	\cL_\phi e^u=e^{c+u}
	$$
	Corresponding eigenfunction $e^u$ is positive and is unique up to scaling. The eigenvalue $e^c$ and the eigenvalue $e^u$ are independent of the choice of $k\in[0, r]$. Further, $e^u$ is $C^{r}$ smooth.
\end{theorem}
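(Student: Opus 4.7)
The plan is to follow the classical Ruelle-Perron-Frobenius/Krzy\.{z}ewski-Sacksteder strategy: a fixed-point argument produces the eigenvalue, a compactness argument produces the eigenfunction, and a regularity bootstrap powered by the uniform expansion of $f$ yields smoothness. First, I would construct $e^c$ together with a conformal probability measure $\nu$ satisfying $\cL_\phi^*\nu = e^c\nu$ by applying the Schauder-Tychonoff fixed point theorem to
\[
\mu \;\longmapsto\; \frac{\cL_\phi^*\mu}{(\cL_\phi^*\mu)(\mathbf 1)},
\]
which is a continuous self-map of the convex weak-$*$ compact set of Borel probabilities on $M$ (note that $(\cL_\phi^*\mu)(\mathbf 1)=\mu(\cL_\phi\mathbf 1)\ge \min \cL_\phi\mathbf 1>0$, so the normalization is well-defined). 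At a fixed point $\nu$, the normalizing constant is the desired eigenvalue $e^c$. Setting $\widetilde\cL u \defin e^{-c}\cL_\phi u$ reduces the problem to spectral radius $1$ with an invariant probability $\nu$.

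Next, I would produce the positive continuous eigenfunction. The key input is a Bowen-type distortion estimate: using the expansion bound $\|Df^{-1}\|\le \theta<1$ along inverse branches together with H\"older continuity of $\phi$, one shows that the iterates $\widetilde\cL^n \mathbf 1$ form a uniformly bounded and uniformly H\"older-equicontinuous family on $M$. Arzel\`a-Ascoli applied to the Ces\`aro averages
\[
\frac{1}{N}\sum_{n=0}^{N-1}\widetilde\cL^n\mathbf 1
\]
then subconverges uniformly to a positive H\"older function $e^u$ satisfying $\cL_\phi e^u = e^{c+u}$. Uniqueness up to scaling, simplicity of the eigenvalue $e^c$, and independence of the chosen base regularity $k$ all follow from a cone/Birkhoff argument: $\widetilde\cL$ maps the cone of nonnegative continuous functions into its interior and, restricted to the cone of positive H\"older functions of bounded log-oscillation, is eventually a strict contraction in the projective Hilbert metric, forcing any two positive eigenfunctions to be proportional.

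Finally, for the statement that $e^u$ is $C^r$, I would bootstrap using the pointwise functional equation
\[
e^{c+u(x)} = \sum_{y\in f^{-1}(x)} e^{\phi(y)+u(y)}.
\]
Differentiating via smooth local inverse branches $g$ of $f$ produces transfer-type equations for the formal derivatives $D^j u$, $1\le j\le \lfloor r\rfloor$, with inhomogeneous terms depending only on the previously-established lower-order derivatives of $u$ and of $\phi$. Because pull-backs by the branches $g$ strictly contract in the norms controlling these derivatives (this is exactly where the expansion of $f$ enters), the resulting operators have spectral radius strictly less than one on the appropriate tensor bundles, and the contraction mapping principle solves for $D^j u$ uniquely from the already-known data. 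Iterating this argument upgrades $u$ from H\"older to $C^1,C^2,\ldots,C^r$ with no loss of derivatives (cf.\ \cite{J}). The main obstacle throughout is the H\"older/distortion control in the second step, which is the technical heart combining the expansion of $f$ with the regularity of $\phi$; once this is in hand, both existence of the eigenfunction and the regularity bootstrap proceed by standard fixed-point machinery.
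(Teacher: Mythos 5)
The paper does not actually prove Theorem~\ref{thm_SK}: it is quoted as a classical result, with Remark~\ref{remark_32} attributing existence/uniqueness to Ruelle, the regularity of the eigenfunction to Sacksteder and Krzy\.{z}ewski, and the sharp $C^r$ statement for integer $r$ to Szewc. Your sketch reproduces exactly that classical route --- Schauder--Tychonoff for the conformal measure and eigenvalue, Bowen distortion plus Arzel\`a--Ascoli for the positive H\"older eigenfunction, a Birkhoff cone contraction for uniqueness, and a branch-wise contraction bootstrap for higher regularity --- so in architecture it is the right proof and matches the sources the paper relies on.

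One step deserves more care than your phrase ``with no loss of derivatives'' suggests, and it is precisely the point the paper itself flags. For non-integer $r=k+\delta$ the bootstrap closes cleanly because each stage of the fixed-point argument can be run in a H\"older class $C^{j,\delta}$ where pullback by inverse branches is a strict contraction. For integer $r$, however, the contraction mapping principle at the top level only produces a \emph{candidate} continuous tensor field solving the formally differentiated equation; showing that this candidate really is the $r$-th derivative of $u$ (rather than merely concluding that $D^{r-1}u$ is Lipschitz) is the delicate part. As Remark~\ref{remark_32} notes, Sacksteder's argument yields only $C^{r-1}+\mathrm{Lip}$ in this case, and the full $C^r$ conclusion required Szewc's separate work. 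If you intend your write-up to cover integer $r$, you should either invoke Szewc or supply the converse-Taylor/uniform-convergence argument identifying the fixed point with the actual derivative. A second, smaller point: your cone argument gives uniqueness among positive H\"older eigenfunctions, whereas the theorem asserts uniqueness (and independence of $k$) within each $C^k$, $0\le k\le r$; to get uniqueness among merely continuous positive eigenfunctions one should add the standard observation that any continuous positive eigenfunction is automatically H\"older (by iterating the eigenvalue equation along inverse branches), after which independence of $k$ is immediate.
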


\begin{remark} \label{remark_32} Originally  this theorem was established by Ruelle for a more general class of expanding maps and in H\"older regularity~\cite{R1,R2} (see also~\cite[1.7]{B}). Sacksteder~\cite{S} and Krzy\.{z}ewski~\cite{Krz} had independently established regularity of the eigenfunction. Krzy\.{z}ewski~\cite{Krz2} has done the analytic case as well. We note that both Sacksteder and Krzy\.{z}ewski only considered the case when $\phi=-\log \textup{Jac}(f)$ because they were interested in regularity of the smooth invariant measure for $f$. However the proofs work equally well for arbitrary smooth potentials. Note that the uniqueness of the eigenspace occurs already  among continuous functions provided that the potential is at least H\"older.

	Another comment is that when $r$ is an integer the proof of Sacksteder only yields $(r-1)+Lip$ regularity of the eigenfunction $e^u$. The $C^r$ regularity of $e^u$ was established by Szewc~\cite{Sz}, see also~\cite[Theorem 8.6.3]{BG} for an exposition in the one-dimensional case.
\end{remark}

\begin{corollary} 
	\label{cor}
	Let $f$ and $\phi$ be the same as in Theorem~\ref{thm_SK}.
	Then there exists a unique $C^{r}$ smooth function $\hat \phi\colon M\to \R$ and a unique constant $c$ given by Theorem~\ref{thm_SK} such that 
	\begin{enumerate}
		\item $\hat \phi+c$ is cohomologous to $\phi$; 
		\item 1 is the maximal eigenvalue of the transfer operator $\cL_{\hat\phi}$;
		\item $\cL_{\hat\phi}1=1$
	\end{enumerate}
\end{corollary}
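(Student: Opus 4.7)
The plan is to explicitly construct $(\hat\phi,c)$ by normalizing the potential via the eigenfunction from Theorem~\ref{thm_SK}, then read off uniqueness from the uniqueness clause of the same theorem. Apply Theorem~\ref{thm_SK} to get the constant $c$ and a $C^r$ function $u$ with $\cL_\phi e^u = e^{c+u}$. My candidate is
$$
\hat\phi \defin \phi + u - u\circ f - c,
$$
which is $C^r$ since $u$ is $C^r$. Property~(1) is immediate from the definition: $\hat\phi+c-\phi = u-u\circ f$ is a coboundary.

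Next I verify (3), from which (2) will fall out. Computing directly, for any $x\in M$,
$$
\cL_{\hat\phi}1(x)=\sum_{y\in f^{-1}x}e^{\phi(y)+u(y)-u(f(y))-c}
=e^{-u(x)-c}\sum_{y\in f^{-1}x}e^{\phi(y)}e^{u(y)}
=e^{-u(x)-c}\cL_\phi e^u(x)=1,
$$
using $f(y)=x$ and $\cL_\phi e^u=e^{c+u}$. For (2), the same kind of manipulation shows that multiplication by $e^u$ intertwines $\cL_{\hat\phi}$ with $e^{-c}\cL_\phi$, namely
$$
\cL_{\hat\phi}(e^{-u}w)=e^{-u-c}\cL_\phi w
$$
for any $w$. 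So the spectra (on each $C^k$, $k\le r$) agree up to the factor $e^{-c}$; since $e^c$ is the maximal positive eigenvalue of $\cL_\phi$ by Theorem~\ref{thm_SK}, the maximal positive eigenvalue of $\cL_{\hat\phi}$ is $1$, attained at $e^{-u}\cdot e^u=1$.

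For uniqueness, suppose $(\hat\phi',c')$ also satisfies (1)--(3). Since $\phi$ and $\hat\phi'+c'$ are cohomologous, the intertwining $w\mapsto e^{v}w$ with a transfer function $v$ gives $\cL_\phi$ and $e^{c'}\cL_{\hat\phi'}$ the same nonzero spectrum; combined with $\cL_{\hat\phi'}1=1$ and Theorem~\ref{thm_SK}, this forces $e^{c'}=e^c$, so $c'=c$. Then $\hat\phi'$ and $\hat\phi$ differ by a coboundary $\hat\phi'-\hat\phi=v-v\circ f$; applying $\cL_{\hat\phi'}$ to $e^v$ and using $\cL_{\hat\phi}1=1$ yields $\cL_{\hat\phi'}e^v=e^v$, so $e^v$ is a maximal positive eigenfunction of $\cL_{\hat\phi'}$. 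The uniqueness-up-to-scaling clause of Theorem~\ref{thm_SK} gives $v$ constant, hence $\hat\phi'=\hat\phi$.

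There is no real obstacle here beyond careful bookkeeping of where $y$ versus $f(y)=x$ appear in the sums defining $\cL$; the substance of the corollary is entirely absorbed in Theorem~\ref{thm_SK}, which supplies both the existence/regularity of $u$ and the uniqueness input driving the uniqueness of $(\hat\phi,c)$.
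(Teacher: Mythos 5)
Your proposal is correct and follows essentially the same route as the paper: define $\hat\phi=\phi-c+u-u\circ f$ using the eigendata from Theorem~\ref{thm_SK}, verify $\cL_{\hat\phi}1=1$ by the conjugation identity $\cL_{\hat\phi}(e^{-u}w)=e^{-u-c}\cL_\phi w$, and derive uniqueness by running the same computation backwards and invoking the uniqueness clause of Theorem~\ref{thm_SK}. The only deviation is a harmless sign convention on the transfer function in the uniqueness step.
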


\begin{proof}
	Let $e^c$ be the maximal eigenvalue with eigenfunction $e^u$ for $\cL_\phi$ given by Theorem~\ref{thm_SK}  
	$$\cL_\phi e^u=e^{c+u}$$ 
	Let 
	$\hat\phi=\phi-c+u-u\circ f$
	then 
	$$\cL_{\hat\phi}1=1$$
	It is also clear that $1$ is the maximal eigenvalue of $\cL_{\hat\phi}$ since otherwise $e^c$ would not be maximal positive eigenvalue for $\cL_\phi$. 
	
	Further, assume that $c'\in\R$ and $\hat\phi'$ continuous also satisfy the conclusion of the corollary with
	$$\hat\phi'=\phi-c'+u'-u'\circ f$$
	Then by the same calculation we have
	$$
	\cL_\phi e^{u'}=e^{c'+u'}
	$$
	with $e^{c'}$ being the maximal positive eigenvalue. Hence, by the uniqueness part of Theorem~\ref{thm_SK} we obtain that $c=c'$ and $u=u'$.
\end{proof}

Such normalized potentials $\hat\phi$ have been recently studied in the context of thermodynamical formalism~\cite{GKLM}.

\begin{remark}
\label{rem_c}
Constant $c$ equals to topological pressure $P(\phi)$. It follows that if $(f_1,\phi_1)\sim (f_2,\phi_2)$ then the maximal eigenvalue is the same for corresponding operators and hence $(f_1,\hat\phi_1)\sim(f_2,\hat \phi_2)$. (But we won't use this fact.)
\end{remark}

\begin{remark}
	Let $e^c$ be the maximal positive eigenvalue for $\cL_\phi$ with eigenfunction $e^u$ and assume that $e^w$ is another positive continuous eigenfunction for $\cL_\phi$, \ie $\cL_{\phi}e^w=\sigma e^w$ for some $\sigma\in\R$, then $w=u+k$ for some $k\in\R$ and $\sigma$ is the maximal eigenvalue. Notice that it follows that condition $2$ of Corollary~\ref{cor} is automatic from condition $3$ because a positive eigenfunction necessarily corresponds to the maximal eigenvalue. (But we won't use this fact.)
\end{remark}



\section{Proof of the Main Theorem: the torus case}
\label{sec_fib}

The proof of Theorem~\ref{rigexpthm} consists of two steps. The first step is to built the fibrations and the second step is to verify the posited property of the fibrations. In this section will prove Theorem~\ref{rigexpthm} under an {\it additional assumption that $M_i$ are homeomorphic to a torus}. This assumption simplifies quite a bit the construction of fibrations. The second step is general and does not rely on homotopy type of $M_i$. Building fibrations in the case when $M_i$ are general nilmanifold requires a more complicated argument that involves and inductive procedure on the degree of nilpotency of the the fundamental group. This more general argument appears in Section~\ref{sec_nilmanifold}.

\subsection{Fibrations}

We begin by explaining the construction of fibrations $p_i$, $i=1,2$, which appear in Theorem~\ref{rigexpthm}. 

Recall that $h\circ f_1=f_2\circ h$ and consider the following space of pairs of smooth functions
$$
V=\{(\psi_1,\psi_2)\in C^{r}(M_1)\times C^{r}(M_2): \psi_1=\psi_2\circ h\}
$$
This is a closed subspace of $C^{r}(M_1)\times C^{r}(M_2)$. Note that if $(\psi_1,\psi_2)\in V$ then $(\psi_1\circ f_1,\psi_2\circ f_2)\in V$. Also note that $V$ always contains constants $(c,c)$ and is an algebra. We denote by $V_i$ the projection of $V$ on $C^{r}(M_i)$, $i=1,2$. 


%
%
%
%
%

Define the subspace fields $E_i(x)\subset T_xM_i$, $i=1,2$, 
$$
E_i(x)=\bigcap_{\psi_i\in V_i^r}\ker d_x\psi_i\,\,\,\,
$$
Notice that if $x_n\to x$, $n\to\infty$, then $\limsup E_i(x_n)\subset E_i(x)$. 
This property implies that the dimension function $\dim E_i(x)$ is upper semicontinuous. 
Let $m_i=\min_{x\in M_i} \dim E_i(x)$, then upper semicontinuity implies that the set 
$$
U_i=\{x\in M_i: k_i(x)=m_i\}
$$
 is open. 
 \begin{lemma} $U_i=M_i$, $i=1,2$ and hence $E_i$, in fact, is a distribution.
 \label{new_lemma}
 \end{lemma}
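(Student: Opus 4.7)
The plan is to show the closed set $B_i := M_i \setminus U_i$ is empty, by combining $f_i$-invariance of $V_i$ with density of preimages and an additional closure property of $V_i$ under the transfer operator. Two preliminary observations are central: (i) $V_i$ is invariant under pre-composition with $f_i$, since if $\psi_1 = \psi_2 \circ h \in V_1$ then $\psi_1 \circ f_1 = (\psi_2 \circ f_2)\circ h$ is again of the required form; and (ii) $V_i$ is invariant under the transfer operator $\cL_{0,f_i}$, since the conjugacy $h \circ f_1 = f_2 \circ h$ identifies preimages under $f_1$ and $f_2$ via $h$, yielding $\cL_{0,f_1}\psi = (\cL_{0,f_2}\psi_2)\circ h$ for $\psi = \psi_2 \circ h \in V_1$.

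From (i), applying the chain rule to $\{\psi \circ f_i : \psi \in V_i\} \subset V_i$ gives $D_x f_i(E_i(x)) \subset E_i(f_i(x))$, hence the forward monotonicity $\dim E_i(x) \leq \dim E_i(f_i(x))$, and in particular $f_i^{-1}(U_i) \subset U_i$, so $B_i$ is forward-invariant. Now fix any $x_0 \in U_i$. Because $f_i$ is expanding, $\bigcup_{n\geq 0} f_i^{-n}(x_0)$ is dense in $M_i$, and by iterated monotonicity every such preimage lies in $U_i$. Hence $U_i$ is open and dense in $M_i$, and $B_i$ is closed with empty interior.

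The main obstacle is then to upgrade ``dense'' to ``all of $M_i$''. My plan is to establish the reverse inclusion $f_i(U_i) \subset U_i$, i.e.\ forward-invariance of $U_i$. Granted this, $B_i$ becomes fully $f_i$-invariant ($f_i^{-1}(B_i)=B_i$), and then since $B_i$ is closed and the preimages of any of its points are dense in $M_i$, one forces $B_i \in \{\emptyset, M_i\}$; non-emptiness of $U_i$ excludes the latter. To prove $f_i(U_i) \subset U_i$, given $z \in U_i$ and $\psi_1,\ldots,\psi_N \in V_i$ with $d_z\psi_j$ independent (where $N = \dim M_i - m_i$), I would invoke (ii): the differentials
\[
d_{f_i(z)}\cL_{0,f_i}\psi_j \;=\; \sum_{w \in f_i^{-1}(f_i(z))} d_w \psi_j \circ (D_w f_i)^{-1}
\]
contain the distinguished contribution $d_z\psi_j \circ (D_z f_i)^{-1}$ from $w=z$, which is independent across $j$. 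The hard part will be disentangling this distinguished contribution from the other preimage terms so as to produce $N$ functions in $V_i$ with linearly independent differentials at $f_i(z)$. A direct localization via a bump function fails, precisely because $V_i$ contains no smooth bumps: if $\eta \in C^r(M_2)$ is a bump, then $\eta \circ h$ is only H\"older, not $C^r$. My plan to overcome this is to iterate $\cL_{0,f_i}$ and combine with the algebra structure on $V_i$ (taking products of the $\psi_j$ with iterates $\chi \circ f_i^k$), so that the unwanted preimage contributions become comparatively negligible or can be absorbed into a generic-position argument guaranteeing independence at $f_i(z)$.
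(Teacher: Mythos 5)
The first half of your argument is sound: $V_i\circ f_i\subset V_i$ does give $D_xf_i(E_i(x))\subset E_i(f_i(x))$, hence $f_i^{-1}(U_i)\subset U_i$ and density of $U_i$ (the paper uses exactly this observation later, in Section~5). But the crucial second step, $f_i(U_i)\subset U_i$, is not proved --- you explicitly defer "the hard part," and the deferred part is where the theorem lives. The transfer-operator idea does not obviously close it: linear independence of the covectors $d_{f_i(z)}\cL_{0,f_i}\psi_j$ can be destroyed by cancellation against the contributions of the other preimages $w\in f_i^{-1}(f_i(z))$, and there is no smallness parameter that makes those contributions "comparatively negligible" (iterating $\cL$ only adds exponentially many further branches, each carrying a factor $(D_wf_i^n)^{-1}$ of comparable size to the distinguished one). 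Worse, your proposed localization by multiplying with $\chi\circ f_i^k$ cannot work even in principle: all $w\in f_i^{-1}(f_i(z))$ satisfy $f_i^k(w)=f_i^k(z)$ for $k\ge1$, so $\chi\circ f_i^k$ takes the \emph{same} value at every preimage and separates nothing. Separating $z$ from the other preimages by elements of $V_i$ is a genuinely nontrivial matter (in the nilmanifold case the paper needs a dedicated separation lemma for this kind of statement), and nothing in your sketch supplies it.

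The paper's proof uses a mechanism you did not consider: the Deck transformation group $\Gamma_i^n$ of the covering $f_i^n\colon M_i\to M_i$. These are $C^r$ \emph{diffeomorphisms}, $h$ conjugates $\Gamma_1^n$ to $\Gamma_2^n$, and hence $V_i\circ T=V_i$ for $T\in\Gamma_i^n$; this gives the exact equality $DT(E_i(x))=E_i(T(x))$, so $\dim E_i$ is \emph{constant} (not merely monotone) along each Deck orbit. Because $\pi_1(M_i)=\Z^d$ is abelian the covering is normal, so the Deck orbit of $x$ is the whole fiber $f_i^{-n}(x)$; these fibers become arbitrarily dense as $n\to\infty$, and since $U_i$ is open, nonempty and $\Gamma_i^n$-invariant, every orbit eventually meets $U_i$ and is therefore contained in it, giving $U_i=M_i$. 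The point is that invertibility of the Deck maps replaces the one-sided inclusion $Df_i(E_i(x))\subset E_i(f_i(x))$ by a two-sided identity, which is exactly what your approach lacks. If you want to salvage your route, you would need to prove the reverse inclusion $E_i(f_i(x))\subset D_xf_i(E_i(x))$ directly, and the Deck-group symmetry is the natural (and, here, available) tool for that.
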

 
 \begin{proof}
 Let $\Gamma^n_i$ be the group of Deck transformations of the covering map $f_i^n\colon M_i\to M_i$, $i=1,2$. Deck transformations are $C^r$ diffeomorphisms.
 By definition
 $$
 \Gamma^n_i=\{T: \, f_i^n\circ T=f_i^n\}
 $$
 and, hence, $h_\#\colon\Gamma^n_1\to\Gamma^n_2$ given by $h_\# (T)=h\circ T\circ h^{-1}$ is an isomorphism. Indeed, if $T\in \Gamma^n_1$ then $f_2^n\circ (h\circ T\circ h^{-1})=h\circ f_1^n\circ T\circ h^{-1}=h\circ f_1^n\circ h^{-1}=f_2^n$ and vice versa. 
 
 Now it is easy to see that $V_i$ are $\Gamma^n_i$-invariant, that is, if $(\psi_1,\psi_2)\in V$ then $(\psi_1\circ T,\psi_2\circ h_\#(T))\in V$ for all $T\in \Gamma^n_1$. Indeed,
 $$
 \psi_2\circ h_\#(T) \circ h =\psi_2\circ h \circ T =\psi_1\circ T 
 $$
 Hence, for all $T\in \Gamma^n_i$ we have
 $$
 E_i(T(x))=\bigcap_{\psi\in V_i}\ker d_{T(x)}\psi=\bigcap_{\psi\in V_i}\ker d_{T(x)}(\psi\circ T)=\bigcap_{\psi\in V_i}DT(\ker d_{x}(\psi))=DT(E_i(x))
 $$
 Hence $E_i$ is $\Gamma^n_i$-invariant and, in particular, the set $U_i$ is $\Gamma^n_i$-invariant.
 
 Because $\pi_1M_i=\Z^d$ is abelian the covering $f_i^n$ is normal and $\Gamma^n_i(x)=f_i^{-n}(x)$. Hence the orbits $\Gamma^n_i(x)$ become arbitrarily dense as $n\to\infty$ and because $U_i$ is open we will have that for a sufficiently large $n$ we have
 $
 U_i=\Gamma_i^n(U_i)=M_i.
 $
 \end{proof}

It is easy to see now that the distributions $E_i$ integrate to $C^r$ foliation $\cF_i$.
Indeed, for every $x\in M_i$ there exist finitely many functions $\psi_i^1,\dots \psi_i^{d-m_i}\in V_i$ such that
$$
E_i(x)=\bigcap_{j=1}^{d-m_i}\ker d_x\psi_i^j
$$
Indeed, just take $\psi_i^j$ such that $\{d_x\psi_i^j\}_j$ is a maximal linearly independent set of $\{d_x\psi\}_{\psi\in V_i}$.

By continuity of $d\psi_i^j$ and since $E_i$ has constant dimension, the same formula holds on a small neighborhood of $x$. That is, there exists a neighborhood  $U_{i,x}$ of $x$ such that
$$
E_i(y)=\bigcap_{j=1}^{d-m_i}\ker d_y\psi_i^j
$$
for all $y\in U_{i,x}$. Therefore, by the implicit function theorem, we have that the maps  $\Psi_{i,x}:U_{i,x}\to\R^{d-m_i}$, 
\begin{equation*}\label{Psi}
\Psi_{i,x}(y)=(\psi_i^1(y),\dots \psi_i^{d-m_i}(y))
\end{equation*}
define a foliation atlas of a $C^{r}$ foliation which is tangent to $E_i$. We denote these foliations by $\cF_i$, $i=1,2$. 

\begin{lemma}\label{constantonleave}
	The leaves of $\cF_i$ are compact. In fact, the leaf $\cF_i(x)$ for $x\in M_i$, is the connected component of $x$ of the intersection 
	$$\bigcap_{\psi\in V_i}\psi^{-1}(\psi(x))$$
\end{lemma}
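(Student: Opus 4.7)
The plan is to prove the two inclusions separately and then extract compactness from that of $M_i$. The inclusion of the leaf in the intersection is almost tautological: by definition of $E_i$, every $\psi \in V_i$ satisfies $E_i(y) \subseteq \ker d_y \psi$ at every $y$, and since $\cF_i$ is tangent to $E_i$, any $\psi \in V_i$ has vanishing derivative along $\cF_i(x)$. Because $\cF_i(x)$ is connected, $\psi$ is constant on it with value $\psi(x)$; hence $\cF_i(x)$ lies in the connected component of $x$ in $\bigcap_{\psi \in V_i}\psi^{-1}(\psi(x))$.

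For the converse, I would work in the foliation chart constructed just above the lemma. Write $S_x := \bigcap_{\psi \in V_i}\psi^{-1}(\psi(x))$. Near $x$ the $d-m_i$ functions $\psi_i^1,\dots,\psi_i^{d-m_i}\in V_i$ whose differentials span the annihilator of $E_i(x)$ define a submersion $\Psi_{i,x}$, and the corresponding level set through $x$ is precisely the plaque of $\cF_i$ through $x$. On the one hand this plaque is contained in $S_x$ by the first step; on the other, any point of $S_x$ in $U_{i,x}$ automatically satisfies $\psi_i^j = \psi_i^j(x)$ for each $j$, hence lies in the level set. Thus $S_x$ coincides with the plaque in a neighborhood of $x$, and running the same argument at an arbitrary $y \in S_x$ (with its own choice of $\psi^j$'s) shows that $S_x$ is a $C^r$ submanifold of dimension $m_i$, saturated by leaves of $\cF_i$, and that each such leaf is both open and closed in $S_x$. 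Consequently the connected components of $S_x$ in the subspace topology coincide with the leaves of $\cF_i$ meeting $S_x$, so the component of $x$ is exactly $\cF_i(x)$.

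Compactness then falls out for free: $S_x$ is closed in the compact manifold $M_i$, hence compact, and the connected component of a point in a compact Hausdorff space is closed and thus compact. The step I expect to require the most care is the local identification of $S_x$ with a single plaque, since $V_i$ is an infinite family while $\Psi_{i,x}$ uses only finitely many of its members; what saves the argument is that by construction the $d\psi_i^j$'s already span the full annihilator of $E_i(x)$, so any additional $\psi \in V_i$ is automatically cut out by the $\psi_i^j$'s on the plaque, without imposing new constraints.
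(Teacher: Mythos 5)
Your proposal is correct and follows essentially the same route as the paper: the tangency $E_i\subset\ker d\psi$ gives the leaf-in-level-set inclusion, and the foliation chart $\Psi_{i,x}$ gives the reverse inclusion locally. You actually spell out a step the paper leaves implicit --- running the chart argument at every point of the level set to see that leaves are open and closed in it, so that connected components are exactly leaves --- and the compactness conclusion via closedness in $M_i$ is the intended one.
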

\begin{proof}
	Let $\psi$ be a function in $V_i$ and let $x\in M_i$. Then by the definition 
	$$T\cF_i(y)=E_i(y)\subset \ker d_y\psi$$ 
	for every $y\in\cF_i(x)$. 
	Hence $\psi$ is constant on $\cF_i(x)$ and $\cF_i(x)\subset \psi^{-1}(\psi(x))$. Hence
	$$\cF_i(x)\subset\bigcap_{\psi\in V^r_i}\psi^{-1}(\psi(x))
	$$
	 On the other hand, recall that, locally, for sufficiently small neighborhood $U_{i,x}\ni x$ we have the foliation chart and hence
	 $$
	 \cF_i(x)\cap U_{i,x}=\Psi_{i,x}^{-1}(\Psi_{i,x}(x))=\bigcap_{j=1}^{d-m_i}(\psi_i^{j})^{-1}(\psi_i^j(x))\cap U_{i,x}\supset \bigcap_{\psi\in V^r_i}\psi^{-1}(\psi(x))\cap U_{i,x}
	 $$ 
and the main claim of the lemma follows. 
\end{proof}

Recall that for every function $\psi_1\in V^r_1$ there is $\psi_2\in V^r_2$ such that  $\psi_2\circ h=\psi_1$ and vice versa. This implies that $h(\cF_1(x))=\cF_2(h(x))$ for every $x\in M_1$. Hence by the invariance of domain theorem we obtain $m_1=m_2$, \ie the dimensions of foliations $\cF_1$ and $\cF_2$ are the same. Also note that Lemma~\ref{constantonleave} and $V_i\circ f_i\subset V_i$ immediately implies that $\cF_i$ is invariant under $f_i$.

To conclude that compact $C^r$ foliations $\cF_i$ are, in fact, fibrations we need to rely on global structural stability of expanding maps and complete the argument on the ``linear side." Namely, we have that $h_i\circ f\circ h_i^{-1}=A\colon\T^d\to\T^d$ is an expanding endomorphism. Then $F=h_i(\cF_i)$ is an $A$-invariant compact continuous foliation on $\T^d$. The action of $\Gamma_i^n$ is conjugate via $h_i$ to the translation action by the set $A^{-n}(\{ x_0\})$, where $x_0$ is a fixed point of $A$ which we identify with $0\in\T^d$. Because the set $\cup_{n\ge 0} A^{-n}(\{ x_0\})$ is dense in $\T^d$ we conclude that $F$ is invariant under the $\T^d$-action on itself by translations. Hence, $\forall y\in F(x_0)$ we have $y+F(x_0)=F(y+x_0)=F(y)=F(x_0)$ and 
$F(-y)=-y+F(x_0)=-y+F(y)=F(-y+y)=F(x_0)$; that is, $F(x_0)$ is a subgroup of $\T^d$. Also recall that $F(x_0)$ is compact and connected. Hence, one can easily check (or use Cartan's closed subgroup thereom) that $F(x_0)$ is a linearly embedded subtorus $\T^m\subset\T^d$. And because $F$ invariant under translations, we conclude that $F$ is a linear fibration $\T^m\to\T^d\to\T^{d-m}$. It remains to recall that $\cF_i=h_i^{-1}(F)$ and, therefore $\cF_i$ is a fibration whose fiber is homeomorphic to $\T^m$ and whose base $\bar M_i$ is a $C^r$ manifold homeomorphic to $\T^{d-m}$.


Because $h$ sends $\cF_1$ to $\cF_2$ it induces a homeomorphism $\bar h\colon \bar M_1\to\bar M_2$. To see that $\bar h$ is smooth consider foliations charts around $x$ and $h(x)$, $x\in M_1$, given by
$$
\Psi_{1,x}(y)=(\psi_{1,x}^1(y),\dots \psi_{1,x}^{d-m_1}(y)),\,\,\,\,\mbox{and}\,\,\,\, \Psi_{2,h(x)}(y)=(\psi_{2,h(x)}^1(y),\dots \psi_{2,h(x)}^{d-m_2}(y))
$$
respectively. In these local coordinates $\bar h$ is given by $\bar h(\Psi_{1,x}(y))=\Psi_{2,h(x)}(h(y))$. However, by defintion, we know that there exist $C^r$ functions $\psi_{1,h(x)}^j$ which satisfy $\psi_{1,h(x)}^j=\psi_{2,h(x)}^j\circ h$, $j=1,\dots d-m_2$. Hence, $\bar h$ is given by
$$
\bar h(\psi_{1,x}^1(y),\dots \psi_{1,x}^{d-m_1}(y))=(\psi_{1,h(x)}^1(y),\dots \psi_{1,h(x)}^{d-m_2}(y))
$$
and since $\Psi_{1,x}$ is a $C^r$ submersion we conclude that $\bar h$ is $C^r$ on a neighborhood of $p_1(x)$. A symmetric argument proves that $\bar h^{-1}$ is $C^r$.

\subsection{Second step of the proof of Theorem~\ref{rigexpthm}: verifying the matching property}
Finally we need  to show that given $(f_1,\phi_1)\sim (f_2,\phi_2)$ we have that $\phi_i$ are cohomologous to functions in $V_i^r$. 




By Corollary \ref{cor} we have $C^r$ functions $\hat\phi_i$ and constants $c_i\in\R$ such that $\phi_i$ is $f_i$-cohomologous to $\hat\phi_i+c_i$ and we also have $\cL_{\hat\phi_1,f_1} 1=1$, $\cL_{\hat \phi_2,f_2} 1=1$. Moreover, $\hat\phi_i$ are unique among the functions cohomologous to $\phi_i$ up to a constant with this property. We know that $\hat\phi_2\circ h$ is cohomologous to $\hat\phi_1+c_2-c_1$. In fact, we will show that 
$$\hat\phi_2\circ h=\hat\phi_1$$  

By direct calculation, we have that $$(\cL_{\hat \phi_2,f_2}v)\circ h=\cL_{\hat \phi_2\circ h,f_1}(v\circ h)$$ for every function $v$. In particular, for the constant function $v=1$ we have
 $$
 1=1\circ h=(\cL_{\hat \phi_2,f_2}1)\circ h=\cL_{\hat \phi_2\circ h,f_1}(1\circ h)=\cL_{\hat \phi_2\circ h,f_1}(1)
 $$
Since $\hat\phi_2\circ h$ is cohomologous to $\phi_1$ up to a constant we get that $\hat\phi_2\circ h=\hat\phi_1$.  Hence $(\hat \phi_1,\hat \phi_2)\in V^r$ and, by the definition of foliations $\cF_i$, we conclude that $\hat \phi_i$ is constant on $\cF_i$, $i=1,2$.
It remains to set $\bar\phi_i(p_i(x))=\hat\phi_i(x)+c_i$.

\section{Proof of the Main Theorem: building fibrations on nilmanifolds}

\label{sec_nilmanifold}

In this section we built the fibrations in the general case when $M_i$ are homeomorphic to a nilmanifold $N/\Gamma$. Recall that, by classification, there is an expanding automorphism $A\colon N\to N$, $A(\Gamma)\subset\Gamma$, which induces an algebraic expanding map $N/\Gamma\to N/\Gamma$ topologically conjugate to $f_i\colon M_i\to M_i$, $i=1,2.$ The rest of the proof of Theorem~\ref{rigexpthm}, that is, verification of the matching property of fibrations, was already done in the second half of Section~\ref{sec_fib}.

Define the subspace fields $E_i(x)\subset T_xM_i$, $i=1,2$, and level sets as follows
$$
E_i(x)=\bigcap_{\psi\in V_i}\ker d_x\psi,\,\,\,\,
\cP_i(x)=\bigcap_{\psi\in V_i}\psi^{-1}(\psi(x))
$$
and let  $\cF_i(x)=cc_x \cP_i(x)$, where $cc_x$ stands for the ``connected component of $x$." Our goal is to show that $\cF_i$ are, in fact, $C^r$ fibrations with fiber and base both homeomorphic to nilmanifolds.

\begin{remark}  If $\dim E_1(x)=0$ at one point $x\in M_1$ then it is easy to conclude using the inverse function theorem that the conjugacy $h$ is $C^r$ on a neighborhood of $x$ and then, using dynamics, that $h$ is $C^r$ globally. Thus the main interest of the proof to follow is in the case when $\dim E_i\ge 1$.
\end{remark}

\subsection{Algebraic lemmas} Recall that $\Gamma$ is a lattice in a simply connected nilpotent Lie group $N$ and, hence, $\Gamma$ is torsion free and nilpotent. Let $\Gamma^0=\Gamma$ let $\Gamma^j=[\Gamma,\Gamma^{j-1}]$ be the lower central series. Denote by $k$ the smallest number such that $\Gamma^{k+1}=\{0\}$. Recall that $A(\Gamma)\subset \Gamma$ and, hence, we also have $A(\Gamma^j)\subset \Gamma^j$. Now define the following lattice
$$
A^*\Gamma^j=A^{-1}(\Gamma^j)\cdot\Gamma
$$
Note that $A^*\Gamma^0=A^{-1}(\Gamma)$ and $A^*\Gamma^{k+1}=\Gamma$. The following lemma implies that $A^*\Gamma^j$ is indeed a well-defined group.
\begin{lemma}
\label{lemma_commute}
$ A^{-1}(\Gamma^j)\cdot\Gamma=\Gamma\cdot A^{-1}(\Gamma^j)$, $j=0, \ldots k+1$.
\end{lemma}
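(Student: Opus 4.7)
The plan is to show that $\Gamma$ normalizes $A^{-1}(\Gamma^j)$ inside $N$; once this is established, the set-theoretic equality $A^{-1}(\Gamma^j)\cdot\Gamma=\Gamma\cdot A^{-1}(\Gamma^j)$ (and the fact that either product is a subgroup) follows by the standard argument from group theory.

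First, I would record two facts about the lower central series. Since each $\Gamma^j$ is a characteristic subgroup of $\Gamma$, it is normal in $\Gamma$, so $\gamma\Gamma^j\gamma^{-1}=\Gamma^j$ for all $\gamma\in\Gamma$. Also, since $A$ is a Lie group automorphism of $N$ with $A(\Gamma)\subset\Gamma$, we have $A(\gamma)\in\Gamma$ for every $\gamma\in\Gamma$.

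Now I would take an arbitrary $n\in A^{-1}(\Gamma^j)$ and $\gamma\in\Gamma$, and compute
\[
A(\gamma n\gamma^{-1})=A(\gamma)\,A(n)\,A(\gamma)^{-1}.
\]
Here $A(n)\in\Gamma^j$ by choice of $n$, and $A(\gamma)\in\Gamma$ by the previous observation. Because $\Gamma^j\triangleleft\Gamma$, the conjugate $A(\gamma)A(n)A(\gamma)^{-1}$ lies in $\Gamma^j$. Hence $\gamma n\gamma^{-1}\in A^{-1}(\Gamma^j)$, i.e.\ $\gamma A^{-1}(\Gamma^j)\gamma^{-1}\subseteq A^{-1}(\Gamma^j)$; applying the same bound to $\gamma^{-1}$ gives equality, so $\Gamma$ normalizes $A^{-1}(\Gamma^j)$.

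Finally I would conclude: given any $\gamma\in\Gamma$ and $n\in A^{-1}(\Gamma^j)$, the element $\gamma n$ can be rewritten as $(\gamma n\gamma^{-1})\gamma\in A^{-1}(\Gamma^j)\cdot\Gamma$, and symmetrically $n\gamma=\gamma(\gamma^{-1}n\gamma)\in\Gamma\cdot A^{-1}(\Gamma^j)$. This gives both inclusions and hence the claimed equality. This also shows that $A^*\Gamma^j$ is a subgroup of $N$, consistent with the remark following the lemma statement. There is no real obstacle here — the content is just checking that normality of $\Gamma^j$ in $\Gamma$ pulls back along $A$; the lemma is purely formal.
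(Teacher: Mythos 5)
Your proof is correct and rests on exactly the same mechanism as the paper's: $A(\Gamma)\subset\Gamma$ together with the normality of $\Gamma^j$ in $\Gamma$ (which the paper expresses via the commutator computation $c\in[\Gamma,\Gamma^j]=\Gamma^{j+1}\subset\Gamma^j$, while you express it as ``$\Gamma$ normalizes $A^{-1}(\Gamma^j)$''). Your packaging is a clean equivalent reformulation and also delivers the subgroup property directly, so there is nothing to fix.
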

\begin{proof} Let $\alpha\in\Gamma^j$ and $\gamma\in\Gamma$. Then
$$
A^{-1}(\alpha)\gamma=A^{-1}(\alpha A(\gamma))=A^{-1}(A(\gamma)\alpha c)
$$
where $c$ is a commutator, $c\in[\Gamma, \Gamma^j]=\Gamma^{j+1}\subset\Gamma^j$. Hence 
$A^{-1}(\alpha)\gamma=\gamma A^{-1}(\alpha c)\in\Gamma\cdot A^{-1}(\Gamma^j)$. This proves the inclusion $ A^{-1}(\Gamma^j)\cdot\Gamma\subset\Gamma\cdot A^{-1}(\Gamma^j)$ and the reverse inclusion follows from a similar  calculation.
\end{proof}

\begin{lemma}
\label{lemma_normal}
The group $A^*\Gamma^{j+1}$ is a normal subgroup of $A^*\Gamma^j$, $j=0, \ldots k$.
\end{lemma}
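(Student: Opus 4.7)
The plan is to mimic the commutator computation already used in Lemma~\ref{lemma_commute}, pushing it one step further. First I would establish the inclusion $A^*\Gamma^{j+1}\subset A^*\Gamma^j$, which is immediate from $\Gamma^{j+1}\subset\Gamma^j$. Then I would take an arbitrary $g\in A^*\Gamma^j$ and $h\in A^*\Gamma^{j+1}$, write $g=a b$ with $a=A^{-1}(\alpha)\in A^{-1}(\Gamma^j)$, $b\in\Gamma$, and $h=c d$ with $c\in A^{-1}(\Gamma^{j+1})$, $d\in\Gamma$, and try to show $ghg^{-1}\in A^{-1}(\Gamma^{j+1})\cdot\Gamma$.

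The two key subcomputations I expect to need are the following:
\begin{enumerate}
\item For $b\in\Gamma$ and $c\in A^{-1}(\Gamma^{j+1})$, one has $bcb^{-1}\in A^{-1}(\Gamma^{j+1})$. This follows by applying $A$ to get $A(bcb^{-1})=A(b)A(c)A(b)^{-1}$ with $A(c)\in\Gamma^{j+1}$, and noting that $\Gamma^{j+1}$ is normal in $\Gamma$.
\item For $a=A^{-1}(\alpha)\in A^{-1}(\Gamma^j)$ and $d\in\Gamma$, one has $ada^{-1}\in\Gamma\cdot A^{-1}(\Gamma^{j+1})$. Indeed, applying $A$ gives $\alpha A(d)\alpha^{-1}=A(d)\cdot[A(d)^{-1},\alpha^{-1}]$, and the commutator lies in $[\Gamma,\Gamma^j]=\Gamma^{j+1}$, so $ada^{-1}=d\cdot A^{-1}(\textup{something in }\Gamma^{j+1})$.
\end{enumerate}

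With these in hand the calculation is a straightforward chain:
\[
ghg^{-1}=a(bcb^{-1})(bdb^{-1})a^{-1}=a c' d' a^{-1}=(ac'a^{-1})(ad'a^{-1}),
\]
where $c'=bcb^{-1}\in A^{-1}(\Gamma^{j+1})$ by (1) and $d'=bdb^{-1}\in\Gamma$. Now $ac'a^{-1}\in A^{-1}(\Gamma^{j+1})$ because $\Gamma^{j+1}\triangleleft\Gamma^j$ and $A^{-1}$ is a group automorphism, while $ad'a^{-1}\in\Gamma\cdot A^{-1}(\Gamma^{j+1})$ by (2). Combining,
\[
ghg^{-1}\in A^{-1}(\Gamma^{j+1})\cdot\Gamma\cdot A^{-1}(\Gamma^{j+1})=A^{-1}(\Gamma^{j+1})\cdot A^{-1}(\Gamma^{j+1})\cdot\Gamma=A^*\Gamma^{j+1},
\]
where the first equality uses Lemma~\ref{lemma_commute} applied at level $j+1$.

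The main potential obstacle is purely bookkeeping: keeping track of which pieces live in $\Gamma$ versus $A^{-1}(\Gamma^j)$ versus $A^{-1}(\Gamma^{j+1})$ after each conjugation step, and making sure the commutators picked up when commuting $\alpha\in\Gamma^j$ past $A(\gamma)\in\Gamma$ stay in $\Gamma^{j+1}$ (which is exactly the content of the lower central series). No further input is required beyond that $A$ is a group automorphism preserving $\Gamma$ and that $[\Gamma^j,\Gamma]\subset\Gamma^{j+1}$.
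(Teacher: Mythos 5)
Your proof is correct and follows essentially the same route as the paper's: both reduce the conjugation to commutators landing in $[\Gamma^j,\Gamma]=\Gamma^{j+1}$ (using that $A$ is an automorphism with $A(\Gamma)\subset\Gamma$) and then invoke Lemma~\ref{lemma_commute} at level $j+1$ to collect the factors into $A^{-1}(\Gamma^{j+1})\cdot\Gamma$; the paper merely packages the computation into a single explicit factorization identity rather than your two-step conjugation. The only blemish is the commutator identity in your step (2), which should read $\alpha A(d)\alpha^{-1}=A(d)\cdot[A(d)^{-1},\alpha]$ (with $[x,y]=xyx^{-1}y^{-1}$), but the correction term still lies in $[\Gamma,\Gamma^j]=\Gamma^{j+1}$, so nothing is affected.
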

\begin{proof} We will use a group element $\alpha\gamma\in A^{-1}\Gamma^j\cdot \Gamma$ to conjugate an element $\beta\delta\in A^{-1}\Gamma^{j+1}\cdot \Gamma$ and we will see that the result is in $A^*\Gamma^{j+1}$.
$$
\alpha\gamma\beta\delta\gamma^{-1}\alpha^{-1}=(\alpha\gamma\alpha^{-1}\gamma^{-1})\gamma(\alpha\beta\alpha^{-1}\beta^{-1})\beta(\alpha\delta\gamma^{-1}\alpha^{-1}\gamma\delta^{-1})\gamma^{-1}\delta
$$
Indeed we have written as a product of elements in $A^*\Gamma^{j+1}$ and commutators from $[A^{-1}(\Gamma^j),\Gamma]=A^{-1}[\Gamma^j,A(\Gamma)]\subset A^{-1}[\Gamma^j,\Gamma]=A^{-1}(\Gamma^{j+1})\subset A^*\Gamma^{j+1}$.
\end{proof}

We also recall that $\Gamma^j\subset A^{-1}\Gamma^j$ and $\Gamma\subset A^*\Gamma^j$ are finite index subgroups.

\subsection{The setup on universal covers} The expanding maps $f_i$ are conjugate to the algebraic expanding map via conjugacies $h_i\colon M_i\to N/\Gamma$. Let $x_i=h_i^{-1}(id_N\Gamma)$ and let $\pi_i\colon (\tilde M_i,\tilde x_i)\to (M_i, x_i)$ be the universal covers, $i=1,2$. We denote by $\Gamma_i=\{T: \pi_i\circ T=\pi_i \}\simeq\Gamma$ the group of Deck transformation of $\pi_i$, which we can also identify with the fundamental group $\pi_1(M_i,x_i)$. Next we lift $h_i$ and $f_i$ to the universal covers in such a way that $\tilde h_i(\tilde x_i)=id_N$ and $\tilde f_i(\tilde x_i)=\tilde x_i$. Then we have
$$
\tilde h_i\circ \tilde f_i=A\circ \tilde h_i,\,\,\, \tilde h\circ \tilde f_1 = \tilde f_2\circ \tilde h,
$$
where $\tilde h=\tilde h_2^{-1}\circ \tilde h_1\colon \tilde M_1\to \tilde M_2$. We also have $\tilde f_i\circ T=A(T)\circ\tilde f_i$, for $T\in\Gamma_i$.

The group $A^{-1}(\Gamma)$ acts on $N$ by left translations $x\mapsto A^{-1}(T)\cdot x=(A^{-1}\circ T\circ A)(x)$, $T\in\Gamma$.
Following the same idea as in~Section~\ref{sec_fib} we can conjugate this action using $\tilde h_i$ and obtain actions on $\tilde M_i$
$$
A^{-1}\Gamma_i=\{\tilde f_i^{-1}\circ T\circ \tilde f_i: T\in \Gamma_i\}
$$ 
Furthermore we can similarly consider the following actions for any $j=0,\ldots k+1$
$$
A^{-1}\Gamma^j_i=\{\tilde f_i^{-1}\circ T\circ \tilde f_i: T\in \Gamma_i^j\},\,\,$$ $$ A^*\Gamma^j_i=A^{-1}\Gamma^j_i\circ \Gamma_i=\{f_i^{-1}\circ T\circ f_i \circ S: T\in \Gamma_i^j, S\in \Gamma_i\}
$$
Clearly the actions of $A^*\Gamma^j_1$ and $A^*\Gamma^j_2$ are conjugate via $\tilde h$. Consider the orbits
$$
\tilde \cO_i^j(x)=A^*\Gamma^j_i(x), x\in\tilde M_i
$$
It is immediate that the orbits are $\Gamma_i$-invariant: $\tilde \cO_i^j(T(x))=\tilde \cO_i^j(x)$, $T\in\Gamma_i$. Hence the projection  of the orbit $\cO_i^j(\pi_i(x))=\pi_i(\tilde \cO_i^j(x))$ is a finite set of cardinality $|A^*\Gamma^j/\Gamma|$. Further these partitions into orbits are invariant under the expanding maps: $\tilde f_i(\tilde \cO_i^j(x))\subset \tilde \cO_i^j(\tilde f_i(x))$, $ f_i( \cO_i^j(x))\subset  \cO_i^j(f_i(x))$. Indeed, let $T\in  \Gamma_i^j$ and $S\in\Gamma_i$ then
$$
\tilde f_i((\tilde f_i^{-1}\circ T\circ \tilde f_i\circ S) (x))=(T\circ \tilde f_i\circ S)(x)=(T\circ A(S))(\tilde f_i(x))\in\tilde \cO_i^j(\tilde f_i(x))
$$
because $T\in\Gamma_i^j\subset A^{-1}\Gamma_i^j$.

Note that $\tilde \cO_i^{k+1}(x)$ is just that $\Gamma_i$-orbit of $x$ and, hence, $ \cO_i^{k+1}(x)=\{x\}$; and $\tilde \cO_i^{0}(x)$ is $A^{-1}\Gamma_i$ orbit of $x$ and, hence, $\cO_i^0(x)=f_i^{-1}(f_i(x))$, while $ \cO_i^{j}(x)$, $j=1,\ldots k$ interpolate in between. 

\begin{remark} The fact that $\cO_i^0(\pi_i(x))$ are not orbits of a finite Deck group action on $M_i$ is forcing us to work on the universal cover; cf.~Section~\ref{sec_fib}.
\end{remark}

We now make an observation, which will be very important in the sequel, that we can consider the same setting for expanding maps $f_1^n$, $f_2^n$ and the expanding endomorphism $A^n$ for any $n\ge 1$, making the above discussion the case when $n=1$. Namely, we have actions of $A^{-n}\Gamma_i^j$ and of $A^{*n}\Gamma_i^j=A^{-n}\Gamma_i^j\cdot \Gamma_i$,  on the universal covers $\tilde M_i$, $i=1,2$, which are conjugate via $\tilde h$. Also, the action of $A^{*n}\Gamma_i^0=A^{-n}\Gamma_i$ is conjugate via $\tilde h_i$ to the action by left translations by elements of $A^{-n}\Gamma$ on $N$. Hence we can consider the group
$$
A^{-\infty}\Gamma=\bigcup_{n\ge 1} A^{-n}\Gamma
$$
which is a dense subgroup of $N$, and its actions $A^{-\infty}\Gamma_i$ on $\tilde M_i$, $i=1,2$.

\subsection{Invariance of the level set partition} To set up an induction argument we introduce ``interpolating" subspace fields and level sets as follows. Recall that 
$$
V=\{(\psi_1,\psi_2)\in C^{r}(M_1)\times C^{r}(M_2): \psi_1=\psi_2\circ h\}
$$
Denote by $\tilde V$ the corresponding space of lifted pairs 
$$
\tilde V=\{(\psi_1,\psi_2)\in C^{r}(\tilde M_1)\times C^{r}(\tilde M_2): \psi_i\circ T=\psi_i \,\,\,\, \forall T\in\Gamma_i, i=1,2; \,\, \psi_1=\psi_2\circ \tilde h\}
$$
and denote by $\tilde V_i$ the projection of $V$ on $i$-th coordinate. 

Now consider the filtration $\tilde V^0\subset \tilde V^1\subset\ldots \subset \tilde V^{k+1}=\tilde V$ given by
$$
\tilde V^j=\{(\psi_1,\psi_2)\in C^{r}(\tilde M_1)\times C^{r}(\tilde M_2):  \psi_i\circ T=\psi_i \,\,\,\, \forall T\in A^*\Gamma^j_i, i=1,2; \,\, \psi_1=\psi_2\circ \tilde h\}
$$
As before, we will use $\tilde V_i^j$ to denote the projection of $\tilde V^j$ on $i$-th coordinate.
Let $V^j=(\pi_1^{-1},\pi_2^{-1})\circ \tilde V^j$ which is well defined due to equivarience. Hence we have corresponding filtration on $M_i$ --- $ V_i^0\subset  V_i^1\subset\ldots \subset  V_i^{k+1}= V_i$. Note that functions on $V^j_i$ are precisely those functions from $V_i$ which are constant on $\cO_i^j(x)$, $x\in M_i$.

Define
$$
\tilde E_i^j(x)=\bigcap_{\psi\in \tilde V_i^j}\ker d_x\psi,\,\,\,\,
\tilde \cP_i^j(x)=\bigcap_{\psi\in \tilde V_i^j}\psi^{-1}(\psi(x)),\,\,\,\, \tilde \cF_i^j(x)=cc_x \tilde \cP_i^j(x)
$$
In the same way define 
$$
E_i^j(x)=\bigcap_{\psi\in  V_i^j}\ker d_x\psi,\,\,\,\,
 \cP_i^j(x)=\bigcap_{\psi\in  V_i^j}\psi^{-1}(\psi(x)),\,\,\,\, \cF_i^j(x)=cc_x  \cP_i^j(x)
$$
Also given a set $O$ define 
$$\tilde \cP_i^j(O)=\bigcup_{x\in O}\tilde \cP_i^j(x)$$
and similarly define sets $ \cP_i^j(O)$.

Immediately from definitions we have the following properties:
\begin{enumerate}
\item $E_i^{k+1}=E_i$, $\cP_i^{k+1}=\cP_i$ and $\cF_i^{k+1}=\cF_i$;
\item $\cP_i^j$ and $\cF_i^j$ are well-defined partitions of $M_i$ and $\tilde \cP_i^j$ and $\tilde \cF_i^j$ are well-defined partitions of $\tilde M_i$;
\item $\tilde E_i^j$, $\tilde \cP_i^j$ and $\tilde \cF_i^j$ are $A^*\Gamma_i^j$-invariant;
\item $Df_i(E_i^j(x))\subset E_i^j(f_i(x))$, $f_i(\cP_i^j(x))\subset \cP_i^j(f_i(x))$ and $f_i(\cF_i^j(x))\subset \cF_i^j(f_i(x))$, $x\in M_i$; and similarly for $\tilde E_i^j$, $\tilde \cP_i^j$ and $\tilde \cF_i^j$;
\item $h(\cP^j_1(x))=\cP^j_2(h(x))$ and $h(\cF^j_1(x))=\cF^j_2(h(x))$; and similarly for $\tilde \cP_i^j$ and $\tilde \cF_i^j$;
\item $E_i(x)=E_i^{k+1}\subset E_i^k(x)\subset \ldots \subset E_i^0(x), x\in M_i$; and similarly for $\tilde E_i^j$;
\item$\cP_i(x)=\cP^{k+1}_i(x) \subset \cP_i^k(x)\subset \ldots \subset \cP_i^0(x), x\in M_i$; and similarly for $\tilde\cP_i^j$, $\cF_i^j$ and $\tilde \cF_i^j$;
\item $\cO_i^j(x)\subset\cP_i^j(x)$, $x\in M_i$, and $\tilde \cO_i^j(x)\subset\tilde \cP_i^j(x)$, $x\in \tilde M_i$;
\item $D\pi_i(\tilde E_i^j)=E_i^j$;
\item $\pi_i^{-1}(\cP_i^j(\pi_i(x)))=\tilde \cP_i^j(x)$ and $\pi_i(\tilde \cF_i^j(x))\subset \cF_i^j(\pi_i(x))$, $x\in M_i$; 
\end{enumerate}

\begin{lemma}
\label{lemma_induction}
For all $j=0,1,\ldots k$ and all $T\in A^*\Gamma_i^j$ we have $T(\tilde \cP_i^{j+1}(x))=\tilde \cP_i^{j+1}(T(x))$ and $DT(\tilde E_i^j(x))=\tilde E_i^j(T(x))$, $x\in \tilde M_i$.
\end{lemma}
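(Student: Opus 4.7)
My plan is to reduce both conclusions to the single fact that precomposition by $T\in A^*\Gamma_i^j$ induces a bijection of the function space $\tilde V_i^{j+1}$ onto itself (and, analogously, of $\tilde V_i^j$ onto itself). Granting this, the two invariance claims follow by a formal reindexing of the defining intersections: a point $y$ lies in $\tilde \cP_i^{j+1}(x)$ iff $\psi(y)=\psi(x)$ for all $\psi\in\tilde V_i^{j+1}$, and substituting $\psi\circ T$ for $\psi$ converts this to $T(y)\in\tilde\cP_i^{j+1}(T(x))$; similarly, differentiating the identity $\psi=\psi\circ T$ for $\psi\in \tilde V_i^j$ and intersecting kernels yields $DT_x(\tilde E_i^j(x))=\tilde E_i^j(T(x))$.

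To establish the bijection I would first transport $T$ to the second universal cover via $T':=\tilde h\circ T\circ\tilde h^{-1}$. Since $h_\#\colon\Gamma_1\to\Gamma_2$ is a group isomorphism it automatically preserves the lower central series, and since $\tilde h$ conjugates $\tilde f_1$ to $\tilde f_2$ it sends $A^{-1}\Gamma_1^j$ to $A^{-1}\Gamma_2^j$; hence $T'\in A^*\Gamma_2^j$. Given $(\psi_1,\psi_2)\in\tilde V^{j+1}$, the candidate pair $(\psi_1\circ T,\psi_2\circ T')$ then satisfies the matching condition automatically because $\tilde h\circ T=T'\circ\tilde h$.

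The heart of the argument is the invariance check for $\psi_1\circ T$ under the smaller group $A^*\Gamma_1^{j+1}$: for $S\in A^*\Gamma_1^{j+1}$,
$$
\psi_1\circ T\circ S=\psi_1\circ(TST^{-1})\circ T,
$$
which collapses to $\psi_1\circ T$ precisely when $TST^{-1}\in A^*\Gamma_1^{j+1}$. This is exactly the normality statement of Lemma~\ref{lemma_normal} and is the only nontrivial algebraic input. A symmetric argument handles the second coordinate, and since $T^{-1}\in A^*\Gamma_1^j$ supplies the inverse precomposition, this gives the sought-after bijection of $\tilde V_i^{j+1}$. The corresponding bijection of $\tilde V_i^j$ is immediate, because conjugation by $T$ automatically stabilizes any subgroup to which $T$ itself belongs.

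I expect the main obstacle to be purely bookkeeping: verifying that $h_\#$ respects both the commutator filtration and the pullback operation $\Gamma^j\mapsto A^*\Gamma^j$, and that each $T\in A^*\Gamma_i^j$ is a genuine diffeomorphism of the simply connected cover $\tilde M_i$ (which holds because $\tilde f_i$ is globally a diffeomorphism, being conjugate via $\tilde h_i$ to the expanding automorphism $A$ of $N$), so that $DT$ is meaningful. Once the definitions are unpacked, essentially the only place where the nilpotent structure enters is through Lemma~\ref{lemma_normal}.
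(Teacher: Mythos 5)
Your proposal is correct and follows essentially the same route as the paper: both reduce the lemma to showing that precomposition by $T$ maps $\tilde V_i^{j+1}$ to itself (with the partner function $\psi_2\circ h_\#(T)$ handling the matching condition via $\tilde h\circ T=h_\#(T)\circ\tilde h$), and both isolate the identity $\psi\circ T\circ S=\psi\circ(TST^{-1})\circ T$ together with the normality of $A^*\Gamma_i^{j+1}$ in $A^*\Gamma_i^j$ (Lemma~\ref{lemma_normal}) as the sole nontrivial input. The only cosmetic difference is in the $\tilde E_i^j$ claim, where the paper's ``similarly'' is even more direct than your stabilizer remark: functions in $\tilde V_i^j$ are literally $T$-invariant for $T\in A^*\Gamma_i^j$, so differentiating $\psi=\psi\circ T$ gives the kernel identity at once.
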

\begin{proof}
It is sufficient to show that if $\psi\in\tilde V_i^{j+1}$ then $\psi\circ T\in\tilde V_i^{j+1}$ for all $T\in A^*\Gamma_i^j$ which implies that $\tilde V_i^{j+1}=\tilde V_i^{j+1}\circ T$. Indeed, if we have that then
\begin{multline*}
\tilde \cP_i^{j+1}(T(x))=\bigcap_{\psi\in\tilde V_i^{j+1}}\psi^{-1}(\psi(T(x)))=T\left(\bigcap_{\psi\in\tilde V_i^{j+1}}(\psi\circ T)^{-1} \big((\psi\circ T)(x))\big)\right)\\
=T\left(\bigcap_{\psi\in\tilde V_i^{j+1}}\psi^{-1}(\psi(x))\right)=T(\tilde \cP_i^{j+1}(x))
\end{multline*}
Similarly, we would have $DT(\tilde E_i^j(x))=\tilde E_i^j(T(x))$ (cf. the proof of Lemma~\ref{new_lemma}).

Let $h_\#\colon A^*\Gamma_1^j\to A^*\Gamma_2^j$ be the isomorphism given by conjugation, $h_\#(T)=\tilde h\circ T\circ \tilde h^{-1}$. To complete the proof we have to show that 
if $(\psi_1,\psi_2)\in\tilde V_i^{j+1}$ then $(\psi_1\circ T,\psi_2\circ h_\#(T))\in\tilde V_i^{j+1}$. We have
$$
\psi_2\circ h_\#(T)\circ\tilde h=\psi_2\circ\tilde h\circ T=\psi_1\circ T
$$
and it remains to check that the function $\psi_1\circ T$, $T\in A^*\Gamma_i^j$, is $A^*\Gamma_i^{j+1}$-equivariant. Indeed, for $S\in A^*\Gamma_i^{j+1}$ we have
$$
\psi_i\circ T\circ S=\psi_i\circ (T\circ S\circ T^{-1})\circ T=\psi_i\circ T,
$$
where the last equality holds because $T\circ S\circ T^{-1}\in A^*\Gamma_i^{j+1}$ by Lemma~\ref{lemma_normal}.
\end{proof}

\begin{lemma}
\label{lemma_sep}
Let $X$ and $Y$ be finite subsets of $M_i$. If $X\cap \cP_i^{j+1}(Y)=\varnothing$ then there exists a function $\psi\in V_i^{j+1}$ such that $\psi|_X=0$ and $\psi|_Y=1$.
\end{lemma}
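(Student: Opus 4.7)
The plan is to exploit the algebra structure of $V_i^{j+1}$ and perform a standard ``polynomial separation'' argument analogous to the proof of partitions of unity via Stone--Weierstrass-type tricks.

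First I would record the key structural observation: $V_i^{j+1}$ is an $\R$-subalgebra of $C^r(M_i)$ containing the constants. Indeed, constants lift to constants on $\tilde M_i$ (trivially $A^*\Gamma^{j+1}_i$-invariant and matched by $\tilde h$), so $(c,c)\in\tilde V^{j+1}$. And if $(\psi_1,\psi_2), (\psi_1',\psi_2')\in\tilde V^{j+1}$ then their sum and product are still $A^*\Gamma^{j+1}_i$-equivariant and still satisfy $\psi_1\cdot\psi_1'=(\psi_2\cdot\psi_2')\circ\tilde h$, so they lie in $\tilde V^{j+1}$. Projecting to either factor, $V_i^{j+1}$ is an algebra with unit.

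Next, I would turn the disjointness hypothesis into a separation statement at the level of pairs. Since $X\cap\cP_i^{j+1}(Y)=\varnothing$, for every $(x,y)\in X\times Y$ we have $x\notin\cP_i^{j+1}(y)=\bigcap_{\psi\in V_i^{j+1}}\psi^{-1}(\psi(y))$, hence there is some $\psi_{xy}^0\in V_i^{j+1}$ with $\psi_{xy}^0(x)\neq \psi_{xy}^0(y)$. Using the affine combination
$$
\phi_{xy}\defin\frac{\psi_{xy}^0-\psi_{xy}^0(x)}{\psi_{xy}^0(y)-\psi_{xy}^0(x)}\in V_i^{j+1},
$$
which is still in $V_i^{j+1}$ by the algebra structure, I obtain $\phi_{xy}(x)=0$ and $\phi_{xy}(y)=1$.

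Finally I would assemble the desired $\psi$ in two stages of ``product tricks.'' For each fixed $x\in X$, set
$$
\phi_x\defin 1-\prod_{y\in Y}(1-\phi_{xy})\in V_i^{j+1}.
$$
Then $\phi_x(x)=1-\prod_{y\in Y}(1-0)=0$, and for any $y_0\in Y$ the factor with $y=y_0$ vanishes, so $\phi_x(y_0)=1$; hence $\phi_x|_Y\equiv 1$. Now define
$$
\psi\defin\prod_{x\in X}\phi_x\in V_i^{j+1}.
$$
For $x_0\in X$, the factor $\phi_{x_0}(x_0)=0$ forces $\psi(x_0)=0$, and for $y\in Y$ every factor is $1$, so $\psi(y)=1$. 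This is exactly the separating function required. There is no real obstacle here beyond checking that the algebra closure of $V_i^{j+1}$ really is preserved at each step, which was handled in the first paragraph; the remainder is the standard Urysohn-style polynomial construction.
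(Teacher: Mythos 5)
Your proof is correct, and it reaches the conclusion by the same overall strategy as the paper (separate each pair $(x,y)$ by an affinely normalized function in $V_i^{j+1}$, then assemble), but the assembly step is genuinely different. The paper combines the pairwise separators by taking $\sum_t \psi_t^2$ (which is $0$ at $x$ and $\ge 1$ on $Y$) and then post-composing with a scalar $C^r$ cutoff $\phi\colon\R\to\R$ with $\phi(0)=0$ and $\phi(\xi)=1$ for $\xi\ge 1$; this requires the observation that $V_i^{j+1}$ is closed under post-composition with arbitrary $C^r$ functions of one real variable, and it repeats the trick once more for the second stage. Your version uses only the polynomial expressions $1-\prod_{y}(1-\phi_{xy})$ and $\prod_x\phi_x$, so the only structural input is that $V_i^{j+1}$ is a unital algebra (closed under sums, products, and containing constants), which you verify and which is the analogue of the remark already made for $V$ in Section~4. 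This is a mild simplification: it sidesteps any discussion of why composition with a $C^r$ function of $\R$ preserves $C^r$ regularity (a point that needs a word of care when $r$ is fractional), at the cost of nothing. Both arguments are complete; yours invokes strictly less about $V_i^{j+1}$.
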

\begin{proof}
The proof is based on the observation that if $\psi\in V_i^{j+1}$ then $\phi\circ \psi \in V_i^{j+1}$ for any $C^r$ function $\phi\colon\R\to\R$.

First consider the case when $X=\{x\}$ and $Y=\{y_1, y_2,\ldots y_p\}$. Then because $x\notin \cP_i^{j+1}(Y)$ it can be separated from every point in $Y$ by a function from $V_i^{j+1}$; that is, for all 
$t=1,\ldots p$ there exists $\psi_t\in V_i^{j+1}$ such that $\psi_t(x)\neq\psi_t(y_t)$. By replacing $\psi_t$ with appropriate linear combination $A\psi_t+B$, we can assume that $\psi_t(x)=0$ and $\psi_t(y_t)=1$. Now let
$$
\psi_x=\sum_{t=1}^p\psi_t^2
$$
Then we have $\psi_x(x)=0$ and $\psi_x(y_t)\ge 1$ for all $t=1,\ldots p$. Finally we replace $\psi_x$ with $\phi\circ \psi_x$, where $\phi$ is a $C^r$ function such that $\phi(0)=0$ and $\phi(\xi)=1$ for all $\xi\ge 1$. This completes the proof in the case when $X=\{x\}$.

In the general case we have $X=\{x_1, x_2,\ldots x_q\}$ and $Y=\{y_1, y_2,\ldots y_p\}$ and we ca apply the above construction to each $x_s$, $s=1,\ldots q$, to obtain a function $\chi_s\in V_i^{j+1}$ such that $\chi_s(x_s)=0$ and $\chi_s(y_t)=1$ for all $t=1,\ldots p$. Consider
$$
\chi=\sum_{s=1}^q(1-\chi_s)^2
$$
Obviously, $\chi(y_t)=0$ for all $t$ and $\chi(x_s)\ge 1$ for all $s$. We can use the $C^r$ function $\phi$ to define the posited separating function as $\psi=1-\phi\circ\chi$.
\end{proof}

\begin{lemma}
\label{lemma_P_orbit}
$\tilde \cP_i^{j+1}(\tilde \cO_i^j(x))=\tilde \cP_i^j(x)$, $x\in\tilde M_i$ and $ \cP_i^{j+1}( \cO_i^j(x))= \cP_i^j(x)$, $x\in M_i$, for all $j=0,1,\ldots k$.
\end{lemma}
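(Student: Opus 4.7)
The plan is to prove the downstairs statement $\cP_i^{j+1}(\cO_i^j(x))=\cP_i^j(x)$ and then deduce the lifted version from property~(10), since a direct check shows both sides of the lifted identity are the $\pi_i$-preimages of the corresponding downstairs sets. The inclusion ``$\subset$'' is immediate: given $y\in\cP_i^{j+1}(\cO_i^j(x))$, pick $x'\in\cO_i^j(x)$ with $y\in\cP_i^{j+1}(x')$; any $\psi\in V_i^j\subset V_i^{j+1}$ is constant on $\cO_i^j$-orbits, so $\psi(x)=\psi(x')$, and constant on $\cP_i^{j+1}$-level sets, so $\psi(x')=\psi(y)$, giving $y\in\cP_i^j(x)$.

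For the reverse inclusion I argue by contradiction: assume $y\in\cP_i^j(x)\setminus\cP_i^{j+1}(\cO_i^j(x))$. The first step is to upgrade this to $\cO_i^j(y)\cap\cP_i^{j+1}(\cO_i^j(x))=\varnothing$. Working on the cover, Lemma~\ref{lemma_induction} says $T(\tilde\cP_i^{j+1}(\tilde z))=\tilde\cP_i^{j+1}(T\tilde z)$ for all $T\in A^*\Gamma_i^j$, so the set $\tilde\cP_i^{j+1}(\tilde\cO_i^j(\tilde x))$ is $A^*\Gamma_i^j$-invariant; hence the avoidance $\tilde y\notin\tilde\cP_i^{j+1}(\tilde\cO_i^j(\tilde x))$ propagates to the entire orbit $\tilde\cO_i^j(\tilde y)$, and projecting yields the desired disjointness downstairs. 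Lemma~\ref{lemma_sep} then furnishes $\psi\in V_i^{j+1}$ with $\psi|_{\cO_i^j(y)}=0$ and $\psi|_{\cO_i^j(x)}=1$.

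The crucial step is to symmetrize $\psi$ into a function in the smaller space $V_i^j$ that still separates $x$ from $y$. The quotient $A^*\Gamma^j/A^*\Gamma^{j+1}$ is finite, being the ratio of the orbit sizes $|A^*\Gamma^j/\Gamma|$ and $|A^*\Gamma^{j+1}/\Gamma|$, and by Lemma~\ref{lemma_normal} the subgroup $A^*\Gamma_i^{j+1}$ is normal in $A^*\Gamma_i^j$. I will work with the lift $\tilde\psi_i$ and define
\[
\tilde\psi_i'(\tilde z)=\sum_{[T]\in A^*\Gamma_i^j/A^*\Gamma_i^{j+1}}\tilde\psi_i(T\tilde z).
\]
Normality and the $A^*\Gamma_i^{j+1}$-invariance of $\tilde\psi_i$ make the sum independent of coset representatives, and a reindexing $T\mapsto TR$ shows $\tilde\psi_i'$ is $A^*\Gamma_i^j$-invariant. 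Since the conjugation isomorphism $h_\#\colon A^*\Gamma_1^\bullet\to A^*\Gamma_2^\bullet$ is a bijection of filtrations, reindexing inside $\tilde h$ also gives $\tilde\psi_1'=\tilde\psi_2'\circ\tilde h$, so $(\tilde\psi_1',\tilde\psi_2')\in\tilde V^j$ descends to a $C^r$ element $\psi'\in V_i^j$. Evaluating, every summand defining $\psi'(x)$ equals $1$ and every summand defining $\psi'(y)$ equals $0$, so $\psi'(x)=|A^*\Gamma^j/A^*\Gamma^{j+1}|\ne 0=\psi'(y)$, contradicting $y\in\cP_i^j(x)$.

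The main obstacle is packaging the averaging step, where three things must be verified at once: that the sum is well-defined on cosets, that the result is $A^*\Gamma_i^j$-invariant in each coordinate, and that the pair continues to match under $\tilde h$. Each verification is short but they all rely on the normality provided by Lemma~\ref{lemma_normal} and on $\tilde h$ intertwining the entire filtration $A^*\Gamma_i^\bullet$, so the whole computation has to be carried out carefully before the final evaluation delivers the contradiction.
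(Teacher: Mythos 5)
Your proposal is correct and follows essentially the same route as the paper: the easy inclusion, then a contradiction argument that uses the $A^*\Gamma_i^j$-invariance of $\tilde\cP_i^{j+1}(\tilde\cO_i^j(x))$ from Lemma~\ref{lemma_induction} to upgrade to orbit-level disjointness, Lemma~\ref{lemma_sep} to produce a separating function in $V_i^{j+1}$, and averaging over the finite quotient $A^*\Gamma_i^j/A^*\Gamma_i^{j+1}$ (justified by Lemma~\ref{lemma_normal}) to land in $V_i^j$ and contradict $y\in\cP_i^j(x)$. The only cosmetic differences are that you organize the statement downstairs first and lift via property~(10), and you spell out the $\tilde h$-matching of the averaged pair slightly more explicitly than the paper does.
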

\begin{proof}
The inclusion $\tilde \cP_i^{j+1}(\tilde \cO_i^j(x))\subset \tilde \cP_i^j(x)$ is straigtfoward. Indeed $\tilde \cO_i^j(x)\subset \tilde \cP_i^j(x)$ and $\tilde \cP_i^{j+1}(y)\subset \tilde \cP_i^j(y)=\tilde \cP_i^j(x)$ for all $y\in \tilde \cO_i^j(x)$.

Assume the reverse inclusion does not hold. Then there exists a point $x$ and $y\in\tilde \cP_i^j(x)$ such that $y\notin \tilde \cP_i^{j+1}(\tilde \cO_i^j(x))$. By Lemma~\ref{lemma_induction} the set $\tilde \cP_i^{j+1}(\tilde \cO_i^j(x))$ is $A^*\Gamma_i^j$-invariant and hence $\tilde \cO_i^j(y)\cap \tilde \cP_i^{j+1}(\tilde \cO_i^j(x))=\varnothing$. Then we also have $\cO_i^j (\pi_i(y))\cap \cP_i^{j+1}( \cO_i^j(\pi_i(x)))=\varnothing$ and we can apply Lemma~\ref{lemma_sep} to $\cO_i^j (\pi_i(y))$ and $\cO_i^j (\pi_i(x))$ and obtain a function $\psi\in V_i^{j+1}$ such that $\psi|_{\cO_i^j (\pi_i(x))}=0$ and $\psi|_{\cO_i^j (\pi_i(y))}=1$. We now lift $\psi$ to $\tilde M_i$ and consider the finite sum
$$
\bar\psi=\sum_{[T]\in A^*\Gamma^j_i/ A^*\Gamma^{j+1}_i} \psi\circ\pi_i\circ T
$$
Note that the summands are well-defined because $\psi\circ \pi_i\in\tilde V_i^{j+1}$ and, hence, are $A^*\Gamma^{j+1}_i$-equivariant. Notice that $\bar\psi|_{\tilde \cO_i^j (x)}=0$ and $\bar\psi|_{\tilde \cO_i^j (y)}=|A^*\Gamma^j_i/ A^*\Gamma^{j+1}_i|>0.$ Finally notice that for any $[S]\in A^*\Gamma^j_i/ A^*\Gamma^{j+1}_i$
$$
\bar\psi\circ S=\sum_{[T]\in A^*\Gamma^j_i/ A^*\Gamma^{j+1}_i} \psi\circ\pi_i\circ T\circ S=\sum_{[T]\in A^*\Gamma^j_i/ A^*\Gamma^{j+1}_i} \psi\circ\pi_i\circ T=\bar\psi
$$
Hence $\bar\psi$ belongs to $\tilde V_i^j$ and separates $x$ and $y$ which yields a contradiction.
\end{proof}
We finally arrive at the main lemma of this subsection. Recall that $\tilde \cF_i=\tilde \cF^{k+1}_i$.
\begin{lemma}
\label{lemma_invariance}
For all $j=0,1,\ldots k$, $\tilde \cF_i^j=\tilde\cF_i$ and $\tilde \cF_i$ is a $A^{-1}\Gamma_i$-invariant partition of $M_i$, $i=1,2$.
\end{lemma}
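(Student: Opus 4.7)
The plan is to establish $\tilde\cF_i^j=\tilde\cF_i$ by downward induction on $j$, starting from the tautological case $j=k+1$ and descending to $j=0$. At each step I would assume $\tilde\cF_i^{j+1}=\tilde\cF_i$ and show $\tilde\cF_i^j=\tilde\cF_i^{j+1}$. One containment, $\tilde\cF_i^{j+1}(x)\subset\tilde\cF_i^j(x)$, is immediate from $\tilde V_i^j\subset\tilde V_i^{j+1}$, which yields $\tilde\cP_i^{j+1}(x)\subset\tilde\cP_i^j(x)$ and hence the corresponding inclusion of connected components. The real work is in the reverse direction.

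For this I would use Lemma~\ref{lemma_P_orbit} to write $\tilde\cP_i^j(x)=\bigcup_{y\in\tilde\cO_i^j(x)}\tilde\cP_i^{j+1}(y)$. The crucial observation is that, although $\tilde\cO_i^j(x)$ is typically an infinite discrete subset of $\tilde M_i$, only finitely many \emph{distinct} sets appear in this union. Indeed, property~(10) gives $\tilde\cP_i^{j+1}(y)=\pi_i^{-1}(\cP_i^{j+1}(\pi_i(y)))$, so two points $y_1,y_2\in\tilde\cO_i^j(x)$ yield the same $\tilde\cP_i^{j+1}$-class iff their $\pi_i$-projections lie in the same $\cP_i^{j+1}$-class of $M_i$; but $\pi_i(\tilde\cO_i^j(x))=\cO_i^j(\pi_i(x))$ has cardinality $|A^*\Gamma^j/\Gamma|<\infty$, so only finitely many $\cP_i^{j+1}$-classes, and hence only finitely many $\tilde\cP_i^{j+1}$-classes, arise.

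Each $\tilde\cP_i^{j+1}(y)$ is closed in $\tilde M_i$ as an intersection of level sets of continuous functions. A finite disjoint union of closed sets has each summand clopen in the union, so any connected subset of $\tilde\cP_i^j(x)$ must lie in a single $\tilde\cP_i^{j+1}$-class. In particular $\tilde\cF_i^j(x)\subset\tilde\cP_i^{j+1}(x)$, and being a connected set containing $x$ it sits inside $\tilde\cF_i^{j+1}(x)$, completing the induction. The $A^{-1}\Gamma_i$-invariance then follows from the identity $\tilde\cF_i=\tilde\cF_i^0$ combined with $A^*\Gamma_i^0=A^{-1}\Gamma_i$: every function in $\tilde V_i^0$ is $A^{-1}\Gamma_i$-invariant, so the partition $\tilde\cP_i^0$ and its refinement into connected components $\tilde\cF_i^0$ are both preserved by the $A^{-1}\Gamma_i$-action.

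The main obstacle is exactly the finiteness reduction in the second paragraph: on the universal cover the orbit $\tilde\cO_i^j(x)$ is genuinely infinite, so a priori $\tilde\cP_i^j(x)$ is an infinite union of closed sets, for which the clopen/disconnection argument fails. The saving point is that each $\tilde\cP_i^{j+1}$-class is already $\Gamma_i$-invariant (it is the full $\pi_i$-preimage of a class downstairs), which effectively replaces the infinite universal-cover orbit by the finite base orbit $\cO_i^j(\pi_i(x))$. This is precisely the feature highlighted in the remark motivating the passage to the universal cover, and it is what makes the downward induction go through.
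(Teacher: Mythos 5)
Your proof is correct and follows essentially the same route as the paper: the identity of Lemma~\ref{lemma_P_orbit} plus the observation that the union is effectively a \emph{finite} union of disjoint closed partition classes, so the connected component of $x$ is trapped inside $\tilde\cP_i^{j+1}(x)$. The only cosmetic difference is that you obtain finiteness by projecting to $M_i$ via property~(10) and using $|\cO_i^j(\pi_i(x))|<\infty$, whereas the paper groups the orbit into the finitely many cosets of $A^*\Gamma_i^{j+1}$ in $A^*\Gamma_i^j$ using the invariance from Lemma~\ref{lemma_induction}; these are equivalent, and your explicit clopen argument supplies a step the paper leaves implicit.
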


\begin{proof}
Recall that $cc_x$ stands for ``connected component of $x$." Applying the previous lemma we have
\begin{eqnarray*}
	\tilde\cF_i^j(x)&=&cc_x\left(\tilde \cP_i^j(x)\right)=cc_x\left(\bigcup_{y\in\tilde\cO_{i}^j(x)}\tilde\cP_i^{j+1}(y)\right)\\
	&=&cc_x\left(\bigcup_{[T]\in A^*\Gamma_i^j/A^*\Gamma_i^{j+1}}T(\tilde\cP_i^{j+1}(x))\right)
	=cc_x\left(\tilde \cP_i^{j+1}(x)\right)=\tilde\cF_i^{j+1}(x)
\end{eqnarray*}
where the first equality in the second line is due to invariance of $\tilde \cP_i^{j+1}(y)$ under the action of $A^*\Gamma_i^{j+1}$.
By induction on $j$ we conclude that $\tilde\cF_i^{j}(x)=\tilde\cF_i^{k+1}(x)$. In particular $\tilde \cF_i^0=\tilde \cF_i$. It remains to recall that $\tilde \cF_i^0$ is $A^*\Gamma_i^0=A^{-1}\Gamma_i$-invariant.
\end{proof}

Using higher iterates of expanding maps we can prove, using exactly the same arguments, that partitions $\tilde \cF_i$ are invariant under the action $A^{-n}\Gamma_i$ for all $n\ge 1$. Hence we have the following corollary. (Recall that $A^{-\infty}\Gamma_i=\cup_{n\ge 1}A^{-n}\Gamma_i$.)
\begin{corollary}
The partition $\tilde \cF_i$ of $\tilde M_i$ is invariant under the action of $A^{-\infty}\Gamma_i$.
\label{cor_invariance}
\end{corollary}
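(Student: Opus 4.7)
The plan is to observe that Lemma~\ref{lemma_invariance} was proved using a general set-up that applies verbatim to any iterate $f_i^n,A^n$ in place of $f_i,A$, and then to take the union over $n$.

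First, I would emphasize that the object whose invariance we care about, namely the partition $\tilde\cF_i = \tilde\cF_i^{k+1}$, depends only on the pair $\tilde h$ and the $\Gamma_i$-equivariance condition built into $\tilde V$. It does not depend on the dynamics $(f_i,A)$ or the filtration groups $A^*\Gamma_i^j$ used along the way. In particular, replacing $(f_i,A)$ by $(f_i^n,A^n)$ leaves $\tilde V$, and hence $\tilde\cF_i$, unchanged: indeed $A^n$ still conjugates to $f_i^n$ via $\tilde h_i$, and at the top level $(A^n)^*\Gamma^{k+1}=\Gamma$, so $\tilde V^{k+1,(n)}=\tilde V$ regardless of $n$.

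Next I would apply Lemma~\ref{lemma_invariance} with $(f_i^n,A^n)$ substituted for $(f_i,A)$. The whole proof of that lemma rests on the algebraic facts packaged in Lemmas~\ref{lemma_commute} and \ref{lemma_normal} about the filtration $\{A^*\Gamma^j\}$, together with the separation/orbit lemmas \ref{lemma_sep} and \ref{lemma_P_orbit}. Each of these uses only formal properties of a pair (expanding endomorphism, lattice) in a nilpotent group, and the lower central series $\{\Gamma^j\}$ is determined by $\Gamma$ alone. Hence every step of Section~\ref{sec_nilmanifold} goes through for the $n$-th iterate with the filtration groups $\{A^{*n}\Gamma^j_i\}$ replacing $\{A^*\Gamma^j_i\}$. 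The conclusion of Lemma~\ref{lemma_invariance} then gives that $\tilde\cF_i = \tilde\cF_i^{0,(n)}$ is invariant under $A^{*n}\Gamma_i^0=A^{-n}\Gamma_i$.

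Finally, since this invariance holds for every $n\ge 1$, the partition $\tilde \cF_i$ is invariant under the whole subgroup $\bigcup_{n\ge 1}A^{-n}\Gamma_i = A^{-\infty}\Gamma_i$, which is the statement of the corollary. The only potential obstacle is a bookkeeping one: namely, to confirm that passing to an iterate does not silently alter any of the algebraic ingredients feeding into Lemma~\ref{lemma_invariance}. But this is immediate because the lower central filtration of $\Gamma_i$ is intrinsic to the fundamental group, and the expanding endomorphism $A^n$ preserves each $\Gamma^j$ whenever $A$ does, so all the constructions of this section transfer without change to higher iterates.
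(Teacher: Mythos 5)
Your proposal is correct and matches the paper's argument: the paper likewise observes that the entire construction applies verbatim to the iterates $(f_i^n, A^n)$, yielding $A^{-n}\Gamma_i$-invariance for each $n$, and then takes the union over $n$. Your explicit check that the top-level object $\tilde\cF_i=\tilde\cF_i^{k+1}$ is independent of $n$ (since $(A^n)^*\Gamma^{k+1}=\Gamma$) is a useful detail the paper leaves implicit.
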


\subsection{Upgrading to a foliation} Now we will prove that $\tilde \cF_i$ is, in fact, a $C^r$ smooth foliation.

\label{sec_up_foliation}

Consider the dimension function $\dim \tilde E_i$ and let $m_i=\min_{x\in\tilde M_i} \dim \tilde E_i(x)$. Pick a point $x\in\tilde M_i$ such that $\dim\tilde E_i(x)=m_i$. Then, by definition of $\tilde E_i(x)$ we can find functions $\psi_1,\psi_2,\ldots \psi_{d-m_i}\in \tilde V_i$ such that
$$
\tilde E_i(x)=\bigcap_{j=1}^{d-m_i} \ker d_x\psi_j
$$
From continuity of $d_x\psi_j$ and the fact that $m_i$ is minimal dimension, we also have the same formula
$$
\tilde E_i(y)=\bigcap_{\psi\in\tilde V_i}\ker d_y\psi=\bigcap_{j=1}^{d-m_i} \ker d_y\psi_j
$$
for all $y$ in a sufficiently small open neighborhood $B$ of $x$. 

Consider the map $\Psi\colon B\to \R^{d-m_i}$ given by $\Psi(y)=(\psi_1(y),\psi_2(y),\ldots \psi_{d-m_i}(y))$. It is clear that the plaque $\Psi^{-1}(\Psi(y))$ is tangent to $\tilde E_i$ at every point of the plaque. By choosing $B$ appropriately we may assume that the plaques $\Psi^{-1}(\Psi(y))$  are path-connected for all $y\in B$.
\begin{lemma} For every $y\in B$ we have $\Psi^{-1}(\Psi(y))=\tilde \cF_i(y)\cap B$.
\label{lemma_up_foliation}
\end{lemma}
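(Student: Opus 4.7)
My plan is to prove the two inclusions separately, both of which follow relatively quickly from the setup, so I do not expect any serious obstacle. The main ingredients are (i) the fact that on $B$ the distribution $\tilde E_i$ is cut out by the specific functions $\psi_1,\dots,\psi_{d-m_i}$, so the plaques of $\Psi$ are smooth submanifolds tangent to $\tilde E_i$, and (ii) that every function in $\tilde V_i$ has differential vanishing on $\tilde E_i$.

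For the inclusion $\Psi^{-1}(\Psi(y))\subseteq \tilde\cF_i(y)\cap B$: I first note that after shrinking $B$ if necessary, $\Psi$ is a $C^r$ submersion on $B$ (since $\ker d_y\Psi=\tilde E_i(y)$ has dimension $m_i$ throughout $B$), so each plaque $\Psi^{-1}(\Psi(y))$ is a $C^r$ submanifold of $B$ tangent to $\tilde E_i$. Given any $\psi\in \tilde V_i$ and any $C^1$ path $\gamma\colon [0,1]\to\Psi^{-1}(\Psi(y))$ with $\gamma(0)=y$ and $\gamma(1)=z$, we have $\gamma'(t)\in \tilde E_i(\gamma(t))\subseteq \ker d_{\gamma(t)}\psi$, so $(\psi\circ\gamma)'\equiv 0$ and therefore $\psi(z)=\psi(y)$. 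Using that the plaque is path-connected, this gives $\Psi^{-1}(\Psi(y))\subseteq \tilde\cP_i(y)$; since the plaque is connected and contains $y$, it lies in the connected component $\tilde\cF_i(y)=cc_y\tilde\cP_i(y)$, and it is obviously contained in $B$.

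For the reverse inclusion $\tilde\cF_i(y)\cap B\subseteq \Psi^{-1}(\Psi(y))$: if $z\in \tilde\cF_i(y)\cap B$, then in particular $z\in\tilde\cP_i(y)$, so $\psi(z)=\psi(y)$ for every $\psi\in\tilde V_i$. Applying this to the specific functions $\psi_1,\dots,\psi_{d-m_i}$ gives $\Psi(z)=\Psi(y)$, i.e.\ $z\in\Psi^{-1}(\Psi(y))$, as required. This is the trivial direction and requires no shrinking or path arguments.

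If I had to pinpoint the only delicate point, it is ensuring that after choosing $B$ as in the paragraph preceding the lemma (so that $\tilde E_i$ is cut out by $d\psi_1,\dots,d\psi_{d-m_i}$ on $B$ and the plaques are path-connected), $\Psi$ is actually a submersion on $B$; this is automatic from $\dim \ker d_y\Psi = \dim\tilde E_i(y)=m_i$ for all $y\in B$, combined with the fact that at $x$ the differentials $d_x\psi_j$ are linearly independent (they span a complement to $\tilde E_i(x)$), a condition that persists on a neighborhood. Everything else is bookkeeping.
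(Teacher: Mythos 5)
Your proof is correct and follows essentially the same route as the paper: the forward inclusion via integrating $d\psi$ along a path inside the path-connected plaque (the paper phrases this as a contradiction at a point $q$ where the derivative is nonzero, you phrase it directly as $(\psi\circ\gamma)'\equiv 0$), and the reverse inclusion by noting that some $\psi_j$ separates any point of $B$ outside the plaque from $y$. Your extra remark that $\Psi$ is a submersion on $B$ because the $d_x\psi_j$ are chosen linearly independent and this persists nearby is exactly the content of the paragraph preceding the lemma in the paper.
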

\begin{proof} If a point $z\in B$ does not belong to a plaque $\Psi^{-1}(\Psi(y))$ then one of the functions $\psi_j$ separates $z$ and $y$. Hence $z\notin\tilde \cP_i(y)\supset\tilde \cF_i(y)$.

Now take $z\in \Psi^{-1}(\Psi(y))$ and consider any function $\psi\in\tilde V_i$. Connect $z$ to $y$ by a path. If $\psi(z)\neq\psi(y)$ then for some point $q$ on the path the restriction of $\psi$ to this path have non-zero derivative and, hence, $\tilde E_i(q)\nsubset \ker d_q\psi$ giving a contradiction. Hence $\psi(z)=\psi(y)$ for all $\psi\in\tilde V_i$, which implies that $\Psi^{-1}(\Psi(y))\subset\tilde P_i(y)$. Thus we also have $\Psi^{-1}(\Psi(y))\subset \cF_i(y)$ because $\Psi^{-1}(\Psi(y))$ is connected.
\end{proof}

Now we have that the restriction $\tilde \cF_i|_B$ is a foliation and we would like to spread the foliation structure to the whole $\tilde M_i$. For that we have to see that $A^{-n}\Gamma_i(B)=\tilde M_i$. If we have it, then
 using invariance under $A^{-n}\Gamma_i$ provided by Corollary~\ref{cor_invariance}, we can conclude that $\cF_i$ has $C^r$ foliation structure in the neighborhood of every point. And, hence, $\cF_i$ is indeed a $C^r$ foliation.

Recall that the action of of $A^{-n}\Gamma_i$ on $\tilde M_i$ is conjugate via $\tilde h_i$ to the action by left translations by $A^{-n}(\Gamma)\subset N$ on $N$. To guarantee that $A^{-n}\Gamma_i(B)=\tilde M_i$ it suffices to choose a sufficiently large $n$ so that the set $\tilde h_i(B)$ covers a fundamental domain of the lattice $A^{-n}(\Gamma)$.

\subsection{Upgrading to a fibration and completing the proof}
Now we have that both $\tilde \cF_1$ and $\tilde \cF_2$ are $C^r$ foliations. We also have that $\tilde h(\tilde \cF_1)=\tilde \cF_2$ and, hence, by Invariance of Domain, these foliations have the same dimension. It remains to show that $\pi_i(\tilde \cF_i)$ (which are clearly also $C^r$ foliations) are, in fact, $C^r$ fibrations. We also need to show that the fibers $\pi_i(\tilde \cF_i)(x)$ and the base of the fibrations are homeomorphic to nilmanifolds and that the induced conjugacy on the base is a $C^r$ diffeomorphism. To do that we go to linearized dynamics on $N/\Gamma$ similarly to the argument in Section~\ref{sec_fib}.

Let $\tilde F=\tilde h_1(\tilde \cF_1)=\tilde h_2(\tilde \cF_2)$. Then $\tilde F$ is a topological foliation with closed leaves which is invariant under the expanding automorphism $A\colon N\to N$. By Corollary~\ref{cor_invariance} foliation $\tilde F$ is also invariant under left translations by $A^{-\infty}(\Gamma)$. Because $\tilde F$ is continuous and $A^{-\infty}(\Gamma)$ is dense we conclude that $\tilde F$ is invariant by all left translations on $N$. This allows us to argue that $\tilde F(id_N)$ is a group. Indeed for all $x, y\in \tilde F(id_N)$ we have $\tilde F(xy)=x\tilde F(y)=x\tilde F(id_N)=\tilde F(x id_N)=\tilde F(id_N)$, and similarly for all $x\in \tilde F(id_N)$ we have $\tilde F(x^{-1})=x^{-1}\tilde F(id_N)=x^{-1}\tilde F(x)=\tilde F(x^{-1}x)=\tilde F(id_N)$. We can now apply apply Cartan's closed subgroup (see \eg~\cite{hall}) theorem to conclude that $\tilde F(id_N)$ is a Lie subgroup of $N$. 

So we denote the leaf through identity by $G=\tilde F(id_N)$.
Hence, using translation invariance again, we conclude that $\tilde F$ is a smooth foliation by cosets of $G$. 

\begin{lemma} Let $F$ be the projection of $\tilde F$ on $N/\Gamma$. Then each leaf of $F$ is either compact or it ``accumulates on itself", that is, there exists $x\in N/\Gamma$ such that for arbitrarily small neighborhood $B$ of $x$ the intersection $F(x)\cap B$ has infinitely many connected components.
\label{lemma_ratner}
\end{lemma}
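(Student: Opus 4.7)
The plan is to reduce the dichotomy to the purely algebraic question of whether the subgroup $H:=G\Gamma$ is closed in $N$. First I would pin down the group structure: since $\tilde F$ descends along $\pi\colon N\to N/\Gamma$, it is invariant under the deck action, which is right multiplication by $\Gamma$; hence every right translate $xG\gamma$ of a leaf is again a leaf. As $xG\gamma\ni x\gamma$, this leaf must equal $x\gamma G$, forcing $G\gamma=\gamma G$, i.e.\ $\gamma^{-1}G\gamma=G$. Thus $\Gamma$ normalizes $G$ and $H=G\Gamma=\Gamma G$ is a subgroup of $N$. Its closure $\overline H$ is then a closed Lie subgroup by Cartan's closed subgroup theorem, and since $\Gamma\subset H\subset\overline H$ the set $x\overline H$ is $\Gamma$-saturated, so $F(x)=\pi(xH)=xH/\Gamma$ and $\overline{F(x)}=\pi(x\overline H)=x\overline H/\Gamma$.

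The dichotomy is now visible on the group side: $F(x)$ is closed (and therefore compact, since $N/\Gamma$ is compact) if and only if $\overline H=H$. Assume $\overline H\supsetneq H$, and let $\overline H^0$ denote the identity component. Since $G$ is connected and $G\subset \overline H$, we have $G\subset\overline H^0$. If we had $\overline H^0=G$, then $\overline H/\overline H^0=\overline H/G$ would be a discrete coset space, and yet $H/G$ would be a dense subset of $\overline H/G$ (because $H$ is dense in $\overline H$); a dense subset of a discrete set is the whole set, so $H/G=\overline H/G$, giving $H=\overline H$ and contradicting the assumption. Hence $\dim\overline H^0>\dim G$.

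Finally, I would extract self-accumulation at the point $\pi(x)$ itself. A small neighborhood of $\pi(x)$ inside $\overline{F(x)}=x\overline H/\Gamma$ contains the image under $\pi$ of $x$ times a small identity neighborhood of $\overline H^0$, hence a topological disk of dimension $\dim\overline H^0$, strictly exceeding $\dim G=\dim F(x)$. Fix a foliation box $B$ around $\pi(x)$ for $F$; each connected component of $F(x)\cap B$ is a single plaque, of dimension $\dim G$. If only finitely many plaques met $B$, then $F(x)\cap B$ would be a finite union of embedded $\dim G$-disks and its closure in $B$ would still be of topological dimension $\dim G$; but this closure must contain the higher-dimensional piece of $\overline{F(x)}\cap B$, a contradiction. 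Thus $F(x)\cap B$ has infinitely many connected components for every sufficiently small neighborhood $B$ of $\pi(x)$, as required. The main obstacle is showing $\overline H^0\supsetneq G$ whenever $H$ fails to be closed — the dense-in-discrete observation of the second paragraph; everything else is soft bookkeeping with the coset structure and a dimension count.
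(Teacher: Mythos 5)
The gap is in your very first step: the claim that $\Gamma$ normalizes $G$, so that $H=G\Gamma$ is a subgroup. At this stage of the argument $G=\tilde F(id_N)$ is merely \emph{some} closed connected subgroup of $N$; its normality is established only in the lemma that comes \emph{after} this one, whose proof runs through Lemma~\ref{lemma_open_set} and hence through Lemma~\ref{lemma_ratner} itself, so it cannot be assumed here. Your independent derivation of it confuses the two sides of the group: the dense group $A^{-\infty}\Gamma\supset\Gamma$ whose action is shown to preserve $\tilde F$ acts on the \emph{same} side as the deck group of $N\to N/\Gamma$, and the leaves of $\tilde F$ are the cosets of $G$ on that very side. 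With a consistent convention the deck-invariance of $\tilde F$ reads $\gamma(xG)=(\gamma x)G$, which is automatic and carries no information; the identity $xG\gamma=x\gamma G$ that you invoke would require invariance of $\tilde F$ under translations on the \emph{opposite} side, which is nowhere established and is in fact equivalent to the normalization you are trying to prove. Moreover the claim is false in the generality in which the lemma must be proved: take $N$ the Heisenberg group, $\Gamma$ its integer lattice, and $G$ a non-central one-parameter subgroup. The coset foliation descends to $N/\Gamma$, yet $\Gamma$ does not normalize $G$ and $G\Gamma\neq\Gamma G$ is not a group. Without the group structure on $H$ its closure need not be a group, $\overline H^{\,0}$ is undefined, and the reduction collapses.

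This is not a repairable detail but the crux of the lemma. The statement ``the closure of the leaf is homogeneous of strictly larger dimension'' is exactly the content of the results the paper invokes: Ratner's orbit-closure theorem for the $G$-action on $N/\Gamma$, or Parry's work, or --- most elementarily --- the Ellis--Furstenberg theorem that a distal action decomposes the space into minimal sets, so every orbit closure is minimal and a non-compact orbit is dense in its closure, hence recurrent. Your subsequent steps are sound and would correctly finish the proof if $H$ were known to be a subgroup (the dense-in-discrete observation showing $\dim\overline H^{\,0}>\dim G$, and the dimension count in a foliation box, are both fine), but that hypothesis is precisely where the dynamical input hides. Note that some such input is unavoidable: for a general foliation a non-compact leaf can accumulate on a \emph{different} compact leaf without accumulating on itself, so the homogeneity or distality of the $G$-action must enter somewhere.
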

\begin{proof}
The leaves of $F$ are orbits of the action of $G$, which is a nilpotent Lie group on $N/\Gamma$.
So one can refer to Ratner theory, specifically to~\cite{ratner}, which gives, in particular, that the closures of  orbits of such a unipotent action are affinely embedded nilmanifolds. Hence each orbit is either compact or dense in it's higher dimensional closure, which implies the needed recurrence. It also not so hard to derive this lemma from earlier work of Parry on homogeneous flows on nilmanifolds: one needs to choose orbits which escape to infinity in non-compact leaves and use~\cite[Theorem 5]{parry}. 

However the lemma can also be derived from more basic topological dynamics using work of Ellis and Furstenberg on distal actions~\cite{ellis, Fur}, which we proceed to explain. It well-known and simple fact that the nilpotent action of $G$ on $N/\Gamma$ is distal. (It follows from the fact that a nil-translation is an iterated isometric extension). Based on work of Ellis, Furstenberg proved that a distal actions can be decomposed into a disjoint union of minimal sets~\cite[Theorem 3.2]{Fur}. Hence each leaf of $F$ is either compact or has a non-trivial closure and is dense in the closure and hence recurrent.
\end{proof}

\begin{lemma} There exists a non-empty open set $U\subset N/\Gamma$ such that each leaf of $F$ that meets $U$ is compact.
\label{lemma_open_set}
\end{lemma}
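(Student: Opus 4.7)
The plan is to combine the Ratner dichotomy from Lemma~\ref{lemma_ratner} with the $A$-invariance and translation-invariance of $F$ to first find at least one compact leaf, and then upgrade this to an open set of compact leaves. Recall that we already know $\tilde F$ on $N$ has leaves given by left cosets of the closed Lie subgroup $G$ and is invariant under both $A$ and all left translations, so the projection $F$ on $N/\Gamma$ is a foliation whose leaves are (immersed) $G$-orbits. We want an open set of points whose $G$-orbit is already closed (hence compact).

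For the existence of a compact leaf, I would consider the function $d(x):=\dim\overline{F(x)}$, which by Ratner's classification takes values in $\{\dim G,\dim G+1,\ldots,\dim N\}$, is constant along $F$-leaves, and is lower semicontinuous. Because $A$ is a covering map preserving the foliation, $A$ takes $\overline{F(x)}$ onto $\overline{F(Ax)}$ and so $d$ is $A$-invariant. Setting $W=\{d=d_0\}$ for $d_0:=\min_x d(x)$ produces a nonempty closed $A$-invariant $F$-saturated subset of $N/\Gamma$. The key step is to argue $d_0=\dim G$: otherwise the partition of $W$ by minimal closures $M_x$ ($x\in W$) yields a nontrivial $A$-invariant partition by compact sub-nilmanifolds strictly larger than the $F$-leaves, but by continuity every $\psi\in V_i$ must be constant on each such $M_x$ (since $F$-leaves are dense in $M_x$). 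This yields a subspace strictly containing $E_i$ on which all the $d\psi$ vanish, contradicting the defining property $E_i=\bigcap_{\psi\in V_i}\ker d\psi$. Hence $d_0=\dim G$, and $W$ equals the nonempty set $C$ of points lying on compact leaves.

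For openness of $C$, I would exploit the translation-invariance of $\tilde F$. The compactness of the leaf $\pi_N(gG)$ is equivalent to the cocompactness of the discrete subgroup $G\cap g\Gamma g^{-1}\subset G$. The $V_i$-construction forces $G$ to be a rational (Malcev-$\mathbb{Q}$-defined) subgroup of $N$, since otherwise by Ratner the $F$-leaves would be dense in proper sub-nilmanifolds, and then no smooth $\psi\in V_i$ could separate these leaves from their limit points; this would again force a strict enlargement of the common kernel of $\{d\psi\}$, contradicting the definition of $E_i$. For a rational $G$ the cocompactness of $G\cap g\Gamma g^{-1}$ is a locally constant function of $g$, and we may take $U$ to be this open subset of $C$.

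The main obstacle, as I see it, is the step $d_0=\dim G$ together with the closely related rationality of $G$. Both hinge on turning the heuristic ``a smooth $\psi\in V_i$ that is constant on a dense-in-$M_x$ leaf must be constant on $M_x$'' into a rigorous contradiction with the pointwise kernel definition of $E_i$; this is where the interplay between the global density provided by Ratner (or Furstenberg's distal decomposition) and the local infinitesimal definition of $E_i$ at a point of $W$ does all the heavy lifting, and where one must be careful to produce enough functions in $V_i$ that are constant on the new (higher dimensional) partition before deriving the contradiction.
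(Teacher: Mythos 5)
The central step of your argument --- ruling out $d_0>\dim G$ --- does not go through as written, and you yourself flag it as the unresolved ``main obstacle.'' The claimed contradiction is not one: if every $\psi\in V_i$ is constant on the leaf closure $M_x$, the only conclusion is that all the $d\psi$ vanish on something larger than the leaf tangent, i.e.\ that $E_i$ is \emph{large} at the relevant points. That is perfectly consistent with the definition $E_i(x)=\bigcap_{\psi\in V_i}\ker d_x\psi$; nothing in the setup says $\dim E_i(x)=\dim G$ at the point where you are arguing (the identity $E_i=T\cF_i$ is only known a posteriori, and only on an open dense set). Two further problems: the closure $M_x$ lives on the linear side and is carried to $M_i$ only by the homeomorphism $h_i^{-1}$, so its image is not a smooth submanifold and ``the subspace on which all the $d\psi$ vanish'' is not even defined without an additional argument; and the lower semicontinuity of $x\mapsto\dim\overline{F(x)}$ is asserted without proof and is dubious (for unipotent actions the points with small orbit closures can be dense while generic closures are large, which is exactly the situation that breaks lower semicontinuity). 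Your second half, the rationality of $G$ and local constancy of cocompactness, leans on the same invalid ``contradiction with the definition of $E_i$.''

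The repair is to localize at a point where $\dim E_i$ is \emph{minimal}, which is what the paper does and which makes the global closure-dimension bookkeeping unnecessary. Let $U_i=\{x: \dim E_i(x)=m_i\}$; this is open by upper semicontinuity of $\dim E_i$. At $x\in U_i$ finitely many functions of $V_i$ give a $C^r$ submersion $\Psi$ on a ball $B$ whose level set $\Psi^{-1}(\Psi(x))$ is a single connected $m_i$-dimensional plaque with $\bigl(\pi_i(\tilde\cF_i)(x)\bigr)\cap B\subset\cP_i(x)\cap B=\Psi^{-1}(\Psi(x))$; since $\pi_i(\tilde\cF_i)$ is an $m_i$-dimensional foliation, the leaf through $x$ meets $B$ in exactly that one connected component, hence cannot ``accumulate on itself,'' and the dichotomy of Lemma~\ref{lemma_ratner} forces it to be compact. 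Setting $U=h_i(U_i)$ finishes the proof in one stroke; no separate openness argument via rationality of $G$ is needed. If you want to salvage your route, the same localization is the missing ingredient: at a point of $U_i$ the set $h_i^{-1}(M_x)$, a topological manifold of dimension $d_0>m_i$, would have to embed into the $m_i$-dimensional plaque $\Psi^{-1}(\Psi(x))$, which invariance of domain forbids.
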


\begin{proof}
We begin by noticing that the properties of being compact and to ``to accumulate on itself" are topological. Hence we have the property given by the Lemma~\ref{lemma_ratner} on the non-linear side as well by applying $h_i^{-1}$: for all $x\in M_i$ either $\pi_i(\tilde F_i)(x)$ is compact or for all small neighborhoods $B$ of $x$ the intersection $(\pi_i(\tilde F_i)(x))\cap B$ has infinitely many connected components.

Now we will argue in the same way as in Subsection~\ref{sec_up_foliation}, but on $M_i$ instead of $\tilde M_i$. 
Let $U_i\subset M_i$ be the set where $\dim E_i$ achieves its minimum $m_i$. Recall that the dimension function $x\mapsto \dim E_i(x)$ is upper semicontinuous, which implies that $U_i$ is open. 

Take a point $x\in U_i$.  Then we can construct a foliation chart for $\cF_i$ which we denote by $\Psi\colon B\to \R^{d-m_i}$ about $x$, $B\subset U_i$, such that $\Psi^{-1}(\Psi(x))$ is a connected subset of $\cF_i(x)$ and for all $z\in B$ which do not belong to $\Psi^{-1}(\Psi(x))$ we have $z\notin \cP_i(x)$ (see the proof of Lemma~\ref{lemma_up_foliation}). Then we have
$$
(\pi_i(\tilde \cF_i)(x))\cap B\subset \cF_i(x)\cap B\subset \cP_i(x)\cap B=\Psi^{-1}(\Psi(x))
$$
On the other hand, recalling that $D\pi_i(\tilde E_i)=E_i$ and the discussion at the  beginning of Subsection~\ref{sec_up_foliation}, we have that $\pi_i(\tilde \cF_i)$ is a $(d-m_i)$-dimensional foliation; that is, it has the same dimension as the plaque $\Psi^{-1}(\Psi(x))$, hence $\pi_i(\tilde \cF_i)(x)\cap B=\Psi^{-1}(\Psi(x))$. Therefore $\pi_i(\tilde \cF_i)(x)\cap B$ has only one connected component and we conclude, by dichotomy of Lemma~\ref{lemma_ratner}, that $\pi_i(\tilde \cF_i)(x)$ is compact. Going back to the foliation $F$ via $h_i$ we obtain the same conclusion: all leaves of $F$ which meet the non-empty open set $U=h_i(U_i)$ are compact.
\end{proof}

\begin{lemma} Group $G$ is a normal subgroup of $N$.
\end{lemma}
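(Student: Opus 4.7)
The plan is to translate the existence of an open set of compact leaves produced by Lemma~\ref{lemma_open_set} into a rationality condition on the conjugates $yGy^{-1}$, and then to exploit the countability of rational subalgebras together with connectedness to force these conjugates to stabilize in a neighborhood of the identity.

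First I would fix a non-empty connected open subset $B\subset\pi^{-1}(U)\subset N$ (for instance a small open ball contained in the preimage of the open set produced by Lemma~\ref{lemma_open_set}). For $y\in B$ a direct coset calculation identifies the leaf $F(\pi(y))$ with the homogeneous space $yGy^{-1}/(\Gamma\cap yGy^{-1})$, so compactness of this leaf is equivalent to $\Gamma\cap yGy^{-1}$ being a cocompact lattice in the connected closed subgroup $yGy^{-1}\subset N$. By Malcev's rationality criterion for closed connected subgroups of a simply connected nilpotent Lie group, this is equivalent to $yGy^{-1}$ being rational with respect to the $\mathbb{Q}$-structure that $\Gamma$ induces on $\mathfrak{n}$, or equivalently, to $\mathrm{Ad}(y)(\mathfrak{g})$ being a rational Lie subalgebra of $\mathfrak{n}$.

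Next I would consider the smooth map $\Phi\colon N\to\mathrm{Gr}_{\dim\mathfrak{g}}(\mathfrak{n})$ defined by $\Phi(y)=\mathrm{Ad}(y)(\mathfrak{g})$. By the previous paragraph its restriction to $B$ takes values in the countable set of rational subspaces of $\mathfrak{n}$ of dimension $\dim\mathfrak{g}$. Since any countable subset of a Hausdorff manifold is totally disconnected, every continuous map from the connected set $B$ into such a subset is constant. Hence $\mathrm{Ad}(y)(\mathfrak{g})$ does not depend on $y\in B$, and for any $y_1,y_2\in B$ we obtain $\mathrm{Ad}(y_2^{-1}y_1)(\mathfrak{g})=\mathfrak{g}$, which is to say $y_2^{-1}y_1\in N_N(G)$.

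Finally, the product set $\{y_2^{-1}y_1:y_1,y_2\in B\}$ contains an open neighborhood of the identity of $N$, and hence so does the normalizer $N_N(G)$. As a closed subgroup of the connected Lie group $N$ with non-empty interior, $N_N(G)$ must coincide with $N$, and this is precisely the statement that $G$ is a normal subgroup of $N$.

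The main point requiring care is the invocation of Malcev's criterion together with the verification that the rational Lie subalgebras of $\mathfrak{n}$ of a given dimension form a genuinely countable subset of the Grassmannian: both statements are standard but must be phrased in terms of the intrinsic $\mathbb{Q}$-structure that $\Gamma$ determines on $\mathfrak{n}$ via the Malcev basis. Once those ingredients are in place the remainder is soft, relying only on the smoothness of $\mathrm{Ad}$, the elementary topological fact that a continuous map from a connected space into a totally disconnected space is constant, and the standard fact that a closed subgroup of a connected Lie group with non-empty interior is the whole group.
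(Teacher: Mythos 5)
Your proof is correct, and its overall skeleton matches the paper's: both arguments start from the open set of compact leaves supplied by Lemma~\ref{lemma_open_set}, show that only countably many conjugates of $G$ can occur over that set, use connectedness to force the conjugate to be constant there, and conclude that the normalizer of $G$ contains a neighborhood of the identity and hence all of $N$. The difference lies in how the countability is obtained. You invoke Mal'cev's rationality criterion: compactness of the leaf through $y$ forces $\mathrm{Ad}(y)(\mathfrak{g})$ to be a rational subalgebra of $\mathfrak{n}$, and the rational points of the Grassmannian form a countable, hence totally disconnected, set, so the continuous map $y\mapsto\mathrm{Ad}(y)(\mathfrak{g})$ is constant on a connected open set. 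The paper instead keeps the countability on the discrete side: it uses that a finitely generated nilpotent group $\Gamma$ has only countably many subgroups, and then must prove by hand that distinct conjugates $x_t^{-1}Gx_t$ along a path produce distinct intersections $\Gamma\cap x_t^{-1}Gx_t$ (this is where the exponential-map/codimension-one argument and the cocompactness of the stabilizers enter), deriving a contradiction with an uncountable family. Your route is cleaner and outsources the hard step to a standard theorem (Raghunathan/Corwin--Greenleaf), essentially absorbing the paper's injectivity argument into Mal'cev's criterion; the paper's route is more self-contained and avoids setting up the $\mathbb{Q}$-structure on $\mathfrak{n}$ explicitly. The only point to watch in your write-up is the left/right convention ($yGy^{-1}$ versus the paper's $x^{-1}Gx$, depending on whether one quotients $N$ by $\Gamma$ on the right or the left), which is immaterial to the argument.
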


\begin{proof}
By Lemma~\ref{lemma_open_set} there exists a small open ball $B\subset U \subset N/\Gamma$ such that every leaf $F(x\Gamma)=Gx\Gamma$, $x\in B$, is compact. We consider the stabilizer group $\Gamma_x$ of the leaf $\tilde F(x)$.
$$
\Gamma_x=\{\gamma\in\Gamma :\,\, Gx=Gx\gamma\}=\{\gamma\in\Gamma :\,\, x\gamma x^{-1}\in G\}=\Gamma\cap x^{-1}Gx\subset\Gamma
$$
Thus $F(x\Gamma)=\tilde F(x)/\Gamma_x$ is homeomorphic to $x^{-1}Gx/\Gamma_x$ and, hence $\Gamma_x$ is a cocompact lattice in $x^{-1}Gx$, $x\in B$.

Now assume that for some $x_0, x_1\in B$ we have $x_0^{-1}Gx_0\neq x_1^{-1}Gx_1$. Then we can find a path $x_t\in B$, $t\in I$, such that $x_t^{-1}Gx_t$, $t\in I$, are all mutually distinct subgroups of $N$. (Indeed, just connect $x_0$ to $x_1$ by a path in $B$ and then choose a small subpath in a neighborhood of a point where $x_t^{-1}Gx_t$ varies infinitesimally linearly with $t$.) 

We can also see that all $\Gamma_{x_t}=\Gamma\cap x_t^{-1}Gx_t$ are mutually distinct. To see that notice that, because the exponential map provides a one-to-one correspondence between subalgebras of the Lie algebra of the simply connected nilpotent Lie group $N$ and its connected Lie subgroups, we intersection subgroup $(x_t^{-1}Gx_t)\cap (x_s^{-1}Gx_s)$, $s\neq t$ is at least codimension one in both $x_t^{-1}Gx_t$ and $x_s^{-1}Gx_s$. Because $\Gamma_{x_t}$ is cocompact in $x_t^{-1}Gx_t$ it must have a non-trivial image in the non-compact quotient space $x_t^{-1}Gx_t/((x_t^{-1}Gx_t)\cap (x_s^{-1}Gx_s))$. Hence, indeed $\Gamma_{x_t}$ contains elements which are not in $\Gamma_{x_s}$. 

Thus we have obtained an uncountable family $\Gamma_{x_t}$, $t\in I$, of mutually distinct subgroups of $\Gamma$, which gives a contradiction. Indeed, $\Gamma$ is finitely generated and nilpotent and, hence, any subgroup of $\Gamma$ is also finitely generated. So $\Gamma$ only have countably many distinct subgroups. We conclude that $x_0^{-1}Gx_0= x_1^{-1}Gx_1$ for all $x_0, x_1\in B$. Hence $(x_1x_0^{-1})^{-1}Gx_1x_0^{-1}=G$ for all $x_1x_0^{-1}$ is a small neighborhood of $id_N$. And because such neighborhood generates $N$ we can conclude that $G$ is normal.
\end{proof}

Let $\Gamma_G=\Gamma\cap G$. Normality of $G$ implies that $\Gamma_x=\Gamma_G$ for all $x$ and we have that $\Gamma_G$ is a cocompact lattice in $G$ by Lemma~\ref{lemma_open_set}. Again using normality of $G$ (and $\Gamma_G$ in $\Gamma$) it is easy to check that the quotient homomorphism $N\to G\backslash N$ induces a well-defined fibration map $p\colon N/\Gamma\to\bar M$ with compact nilmanifold base $\bar M=(G\backslash N)/(\Gamma_G\backslash \Gamma)$ and nilmanifold fiber $G/\Gamma_G$.
Conjugating back to $M_i$ we obtain the posited fibrations $p_i\colon M_i\to\bar M_i$ whose fibers are the leaves of $\pi_i(\tilde \cF_i)$ which are homeomorphic to $G/\Gamma_G$ and whose base $\bar M_i$ is homeomorphic to $\bar M$. Note that $p_i$ are $C^r$ smooth because we already have that $\pi_i(\tilde \cF_i)$ are $C^r$.

It remains to check that $\bar h \colon \bar M_1 \to \bar M_2$ induced by $h$ is a $C^r$ diffeomorphism. This remaining argument follows closely the corresponding argument in Section~\ref{sec_fib}. The only difficulty comes from the fact that we still do not know that $T(\pi_i(\tilde \cF_i))=E_i$ (and that $\cF_i$ is a foliation). However, we do know, from the proof of Lemma~\ref{lemma_open_set}, that $E_i=T\cF_i=T(\pi_i(\tilde \cF_i))$ on an open and dense set --- the set where $E_i$ achieves the minimal dimension $m_i$. Indeed, recall that $m_i$ denotes the minimal dimension of $E_i$. We have shown that $m_1=\dim\cF_1=\dim\cF_2=m_2$. Let $U_i=\{x\in M_i: \dim E_i(x)=m_i\}$. Recall that $U_i$ is open (cf. the discussion of Subsection~4.4). Further if $x\in U_i$ and $f_i(y)=x$ then, using $ V_i\circ f_i\subset V_i$, we have
\begin{multline*}
Df_i(E_i(y))=Df_i\left(\bigcap_{\psi\in V_i}\ker d_y\psi \right)\subset Df_i\left(\bigcap_{\psi\in V_i}\ker d_y(\psi\circ f_i) \right)\\
=\bigcap_{\psi\in V_i}Df_i(\ker d_y(\psi\circ f_i))=\bigcap_{\psi\in V_i}\ker d_x\psi=E_i(x)
\end{multline*}
Hence, because $\dim E_i(x)$ is minimal the above inclusion is, in fact, equality and $y\in U_i$. We obtain $f_i^{-1}(U_i)\subset U_i$, which implies that $U_i$ is dense in $M_i$. Therefore we can pick a point $x\in U_1$ such that $h(x)\in U_2$. Consequently both of these points admit nice foliation charts and we can show, repeating verbatim the arguments of the last paragraph of Subsection 4.1, that $\bar h$ is a $C^r$ diffeomorphism on a neighborhood $B$ of $p_1(x)$.

Now recall that $\bar h$ conjugates the induced $C^r$ expanding maps, $\bar h\circ\bar f_1=\bar f_2\circ \bar h$. We can lift all the maps to the universal covers and express the lift of $\bar h$ as follows:\
$$
\tilde{\bar h}=\tilde{\bar f}_1^n\circ \tilde{\bar h}\circ \tilde{\bar f}_2^{-n}
$$
If $\tilde B$ is the lift of $B$, then the above equation implies that $\tilde{\bar h}$ is a $C^r$ diffeomorphism on $\tilde{\bar f}_2^{n}(\tilde B)$. For a sufficiently large $n$ the set $\tilde{\bar f}_2^{n}(\tilde B)$ contains a fundamental domain of the cover and we conclude that $\bar h$ is indeed a $C^r$ diffeomorphism.

\begin{remark} Once the proof is finished we can actually conclude that $\cF_i=\pi_1(\tilde\cF_i)$ and $E_i=T\cF_i$. This fact would be very helpful to have in the course of the proof, however it was out reach and we only can obtain it a posteriori. To see that $\cF_i=\pi_1(\tilde\cF_i)$ and $E_i=T\cF_i$ one can characterize the space of functions $V_i$ as the space of $C^r$ functions which are constant on the fibers of $p_i$. Such characterization easily follows from the fact that $\bar h$ is a $C^r$ diffeomorphism. Note, however, that this fact is not needed in the statement of Theorem~\ref{rigexpthm}.
\end{remark}

\section{Proofs of Corollaries} 
\begin{proof}[Proof of Corollary~\ref{cor1}]
We denote by $L$ the linear endomorphism to which both $f_1$ and $f_2$ are conjugated.

By passing to the second iterate we may assume $\textup{Jac}(f_i)>0$, $i=1,2$. Let $\phi_i=-\log \textup{Jac}(f_i)$. By Theorem \ref{rigexpthm} we have $C^r$ fibrations (with connected fiber) $p_i:M_i\to\bar M_i$ and functions $\bar \phi_i:\bar M_i\to\R$ such that  $\bar \phi_i\circ p_i$ is cohomologous to $\phi_i$ and the induced conjugacy $\bar h:\bar M_1\to \bar M_2$, $\bar h\circ p_1=p_2\circ h$, 
 is a $C^r$ diffeomorphism.
 
If $\dim \bar M_1=0$, then $\bar\phi_1$ is constant and, hence, $\phi_1$ is cohomologous to a constant. Then the equilibrium state for $\phi_1$, which is the absolutely continuous measure equals the equilibrium state for the constant function which is the entropy maximizing measure \cite{B}, contradicting the assumption of the corollary.

If $\dim \bar M_1=d$, then $p_1$ and $p_2$ are diffeomorphisms (in fact, identity diffeomorphisms) and, hence, $h$ is a $C^r$ diffeomorphism since $h=p_2^{-1}\circ\bar h\circ p_1$.

It remains to consider the case when $0<\dim\bar M_1<d$. However, this is impossible due to irreducibility. Indeed from the proof of Theorem~\ref{rigexpthm} in Section~4.1 it is clear that $L$ leaves invariant a torus of a positive dimension $m<d$, which contradicts to irreducibility. We can also provide an alternative self-contained short argument below.

Abbreviate $M=M_1$ and $\bar M=\bar M_1$. Let $x$ be a fixed point of $f_1$ and let $F$ be the fiber of $p_1$ which contains $x$. Recall that, by Theorem~\ref{rigexpthm}, $\bar M$ supports an expanding map $\bar f_1$  and, hence, is aspherical. Therefore the fundamental groups fit into the short exact sequence
$$
0\to \pi_1(F)\to\pi_1(M)\to\pi_1(\bar M)\to 0
$$
Note that taking tensor product with $\R$ leaves the sequence exact.

Because $f_1(F)=F$ we have $(f_1)_*(\pi_1(F))=L_*(\pi_1(F))<\pi_1(F)<\pi_1(M)\simeq\Z^d$. Since $\dim\bar M<d$ we have that $\dim F>0$ and $F$ is compact and also aspherical (because it supports the expanding map $f_1|_F$). It follows that $\pi_1(F)\otimes \R$ gives a non-zero rational invariant subspace for $L$. Because $L$ is irreducible we conclude that  $\pi_1(F)\otimes \R=\R^d$. Hence $\pi_1(\bar M)\otimes\R=0$, \ie $\pi_1(\bar M)$ is torsion finitely generated abelian group, hence, finite. But any closed aspherical manifold of dimension >0 has an infinite fundamental group, a contradiction.
\end{proof}

\begin{proof}[Proof of Corollary~\ref{rigexpcor}]
By classification of expanding maps, manifolds  $M_i$ are homeomorphic to infranilmanifolds. Therefore we can pass to the nilmanifold covers and, accordingly, pass to the lifts of expanding maps. It is easy to see that the very non-algebraic assumption still holds for the lifted maps. From now on we assume that $M_i$ are homeomorphic to nilmanifolds and, hence, Theorem~\ref{rigexpthm} applies.
	
	Recall that $f_i$ are $C^{r+1}$, $r\geq 1$,  very non-algebraic expanding maps and $h:M_1\to M_2$ is a conjugacy. We apply Theorem~\ref{rigexpthm} to $f_i$ and $r$ (not $r+1$!). Let $p_i:M_i\to\bar M_i$, $\bar f_i:\bar M_i\to\bar M_i$, $\bar h:\bar M_1\to\bar M_2$ be the $C^r$ maps given by Theorem~\ref{rigexpthm}. We shall show that $\dim(\bar M_i)=\dim(M_i)$. Then we would have $h=\bar h$ and, by  Theorem~\ref{rigexpthm}, the conjugacy is $C^r$.
	

	  Assume that $\dim(M_i)-\dim(\bar M_i)=m>0$. Recall that by Theorem~\ref{rigexpthm} the fibers $F_{i,x}=p_i^{-1}(p_i(x))$ are nilmanifolds and, hence, are orientable. Moreover the fibers can be simultaneously coherently oriented because the base space $\bar M_i$ is also an orientable nilmanifold. We fix a choice of orientation on fibers and on the base. The expanding map $f_i$ does not necessarily preserve any of the orientations. (And we cannot pass to finite iterates because such operation would not preserve the ``very non-algebraic condition.'') Let $\boldsymbol d$ be the absolute value of the degree of the map between the fibers 
	$$
	\boldsymbol d=\left|\deg(f_i|_{F_{i,x}}\colon F_{i,x}\to F_{i, f_i(x)})\right|
	$$
Note that $\boldsymbol d$ is indeed independent of $x$ by continuity and is independent of $i$ because $f_i$ are conjugate. Further, if $\dim F_{i,x}>0$ then $\boldsymbol d>1$ because the  expanding map on the fiber through a fixed point is a self cover of degree $>1$.
	

	
	In the rest of the proof write $J$ to {\it denote the absolute value of the Jacobian} of a map --- $Jf:=|\textup{Jac}(f)|$. Let $\psi_i=\log (Jf_i|_{\ker Dp_i})$. We note that these functions are only $C^{r-1}$ because the distributions $\ker Dp_i$ are merely $C^{r-1}$.

First we pick Riemannian metrics on $\bar M_i$, $i=1,2$, so that $\bar h$ is volume preserving (\eg an isometry) and, hence,
 $$
 \log J\bar f_1=\log J\bar f_2 \circ \bar h
 $$
 Then pick a smooth connections $\mathcal E_i$ for $p_i$ (subbundles transverse to $\ker Dp_i$) and then lift the Riemannian metrics from $\bar M_i$ to $\mathcal E_i$. Then consider Riemannian metrics on $M_i$ which are direct sums of metrics on $\ker Dp_i$ and the lifted metrics on $\mathcal E_i$. By construction, the differential $Df_i$ have upper-triangular form and we have
	$$
 \psi_i = 	\log J f_i - \log J\bar f_i\circ p_i
	$$
The Livshits Theorem for expanding maps together with the assumption on Jacobians at periodic points imply that $\log J f_1$ is cohomologous to $\log J f_2\circ h$. Note that $\log J f_i$ are $C^r$ functions. Hence, by the main property of $(p_i,\bar f_i,\bar h)$ given by Theorem~\ref{rigexpthm}, we have that $\log J f_i$ is $f_i$-cohomologous to a $C^r$ function which is constant on the fibers. Because $\log J\bar f_i\circ p_i$ are also constant on the fibers, it follows that $\psi_i$ are cohomologous to $\bar\psi_i\circ p_i$.


	In other words, there exist $C^r$ function $u_i$ such that 
	$$
	\log (Jf_i|_{\ker Dp_i})-u_i+u_i\circ f_i=\bar\psi_i\circ p_i
	$$ 
	Therefore by replacing  the volume form $\omega$ on the fibers $F_{1,x}$ with the volume form 
	$$
	\bar\omega=e^{u_1}\omega$$ 
	we can assume that the absolute value of the Jacobian of $f_1|_{\ker Dp_1}$ equals to $e^{\bar\psi_1\circ p_1}$.  Denote by $vol(F_{1,x})$ the total $\bar\omega$-volume of $F_{1,x}$. For any $x\in M_1$ we have
	 \begin{multline*}
	 e^{\bar\psi_1\circ p_1(x)}=\frac{1}{vol(F_{1,x})}\int_{F_{1,x}} e^{\bar\psi_1\circ p_1}\bar\omega 
	 =\frac{1}{vol(F_{1,x})}\int_{F_{1,x}} Jf_1|_{\ker Dp_1}\bar\omega=\boldsymbol d
	 \end{multline*}
Hence for every periodic point $x$, $f_1^kx=x$ we have that $Jf_1^k|_{\ker Dp_1}=\boldsymbol d^k$. This means that either $\boldsymbol d^k$ or $(-\boldsymbol d)^k$ belongs to the spectrum of 
$$
\bigwedge^m Df_1^k(x)
$$
 which contradicts to $f_1$ being very non-algebraic. We conclude that $m=0$, \ie $\dim(\bar M_1)=\dim(M_1)$ and we are done.
\end{proof}

Note that even though the regularity of $f_i$ is $r+1$, we use Theorem \ref{rigexpthm} with regularity $r$ because we work with Jacobian of $f_i$.

\section{Examples}

\begin{example}[Basic example]
\label{ex1}
Here we give an explicit example where non-trivial fibrations $p_i\colon M_i\to\bar M_i$, $i=1,2$, with $\dim \bar M_i\neq 0, \dim M_i$ appear. Consider the expanding maps $L, f\colon \T^2\to\T^2$ given by $L(x,y)=(2x,2y)$ and $f(x,y)=(g(x), 2y)$, where $g$ is conjugate to $\times 2$ map via nowhere differentiable conjugacy $h_0$, $h_0\circ g= 2h_0$. For simplicity we may assume that $g(0)=0$ and $g'(0)<2$. Then $h=(h_0, id_{S^1})$ is the conjugacy between $f$ and $L$. Recall that fibrations $p_i$ arise from the space of pair of $C^r$ functions $(\psi_1,\psi_2)$ which satisfy $\phi_1=\psi_2\circ h$, \ie 
$$
\psi_1(x,y)=\psi_2(h_0(x),y)
$$
Clearly any $C^r$ function $\psi_1(x,y)=\psi(y)$ belong to this space. We will show that these are the only functions which could appear. Then, it immediately follows that $p_1(x,y)=p_2(x,y)=y$. That is, $p_i$ are circle fibrations over $S^1$.

Denote by ${\partial}_{inf} h_0$ the lower derivative of $h_0$ defined via $\liminf$. All periodic points which spend sufficiently large proportion of time near $0$ have Lyapunov exponent $<\log2$. Such periodic points $p$ are dense in $S^1$ and it is easy to see that ${\partial}_{inf} h_0(p)=0$ for any such $p$. Hence differentiating the relation between $\psi_1$ and $\psi_2$ with respect to $x$ yields
$$
\frac{\partial}{\partial x}\psi_1(p,y)=\frac{\partial}{\partial x}\psi_2(h_0(p),y) \partial_{inf} h_0(p)=0
$$
for a dense set of $p$. Hence, indeed, $\psi_1$ and $\psi_2$ are functions of $y$ only.

\end{example}

\begin{remark}\label{remark62}
Any primitive vector $(m,n)\in\Z^2$ yields a fibration $S^1\to\T^2\to S^1$ whose fibers in the universal cover $\R^2$ are lines parallel to the vector $(m,n)$. This gives infinitely different fibrations each of which is preseved by the conformal map $L$ from Example~\ref{ex1}. Further, similarly to the construction of Example~\ref{ex1}, one can construct perturbations $f_{(m,n)}$ such that the fibration given by the Theorem~\ref{rigexpthm} is precisely the fibration coming from $(m,n)$. 
\end{remark}

\begin{example}[de la Llave example] Non-trivial fibration may appear in a more subtle way when Jacobians full periodic data match. Of course, this can only happen for expanding maps which are not very non-algebraic. The example presented here is due to de la Llave~\cite{dlL}.

Consider the maps
$$
L(x,y)=(dx, ay),\,\, d\ge 2, a\ge 2,
$$
and 
$$
f(x,y)=(dx+\alpha(y), ay)
$$
Then the conjugacy between $L$ and $f$ has the form
$$
h(x,y)=(x+\beta(y), y)
$$
where $\beta$ can be expressed explicitly as the series~\cite{dlL} 
$$
\beta(y)=\frac{1}{d}\sum_{i\geq 0}\frac{1}{d^i}\alpha(a^iy)
$$ 
Notice that $\beta$ is a Weierstrass function.
Let 
$$
r_0=\frac{\log d}{\log a}
$$ 
and let $r_0=n+\theta$ where $n_0\in\N_{0}$ and $\theta\in(0,1]$. 

 To analyze the regularity of $\beta$ there are several cases to consider which give different answers. 

\begin{lemma}
	Assume that $\alpha\in C^r$, $r=k+\delta$, $k\in\N_0$, $\delta\in[0,1)$ and let $r_0=n+\theta$, as above $n\in\N_0$ and $\theta\in(0,1]$, then
	\begin{enumerate}\label{regularitybeta}
		\item Case I: $r<r_0$ then $\beta\in C^r$;
		\item Case II: $r>r_0$, $r_0\notin\N$ then $\beta\in C^{r_0}$;
		\item Case III: $r>r_0$, $r_0=n+1\in\N$ then $\beta\in C^{n+x|\log x|}$; 
		\item Case IV: $r=r_0$ then $\beta\in C^{n+x^\theta|\log x|}$;
	\end{enumerate} 
In all cases, there is a generic set of $\alpha\in C^r$ where the regularity is optimal, in particular for such $\alpha$, $\beta\notin C^{r_0+\epsilon}$ for any $\epsilon>0$.
\end{lemma}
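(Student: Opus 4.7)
The plan is to formally differentiate $\beta$ termwise, identify the largest surviving derivative, and then control its modulus of continuity by a dyadic splitting argument. Formally,
\begin{equation*}
\beta^{(k)}(y) = \frac{1}{d}\sum_{i\ge 0}\Bigl(\frac{a^k}{d}\Bigr)^i\alpha^{(k)}(a^i y),
\end{equation*}
and because $a^{r_0}=d$, the ratio $a^k/d$ is less than $1$ iff $k<r_0$; under this restriction the series converges uniformly, so $\beta\in C^{\lfloor r_0\rfloor}$ whenever $\alpha\in C^{\lfloor r_0\rfloor}$. This already handles Case~I directly: when $r<r_0$, take $k=\lfloor r\rfloor$, bound $|\alpha^{(k)}(a^i(y+h))-\alpha^{(k)}(a^i y)|\le C(a^i|h|)^\delta$, and sum the geometric series $\sum_i(a^{k+\delta}/d)^i$, which converges since $k+\delta=r<r_0$.

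The three remaining cases require more care because termwise H\"older estimates no longer converge. The central device is as follows. Fix the largest integer $k\le r_0$ and, for small $h>0$, pick the unique scale $N=N(h)$ with $a^N|h|\sim 1$, so $N\sim -\log|h|/\log a$. Split
\begin{equation*}
\beta^{(k)}(y+h)-\beta^{(k)}(y)=\frac{1}{d}\Bigl(\sum_{i\le N}+\sum_{i>N}\Bigr)\Bigl(\frac{a^k}{d}\Bigr)^i\bigl[\alpha^{(k)}(a^i(y+h))-\alpha^{(k)}(a^i y)\bigr].
\end{equation*}
On the bulk block $i\le N$, use H\"older/Lipschitz continuity of $\alpha^{(k)}$ with exponent $\sigma=\min(1,r-k)$, producing a geometric sum with ratio $a^{k+\sigma}/d$ scaled by $|h|^\sigma$. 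On the tail block $i>N$, use the sup bound on $\alpha^{(k)}$, producing a geometric sum with ratio $a^k/d<1$. Substituting $a^N\sim |h|^{-1}$ and $d=a^{r_0}$, both geometric expressions collapse to clean powers of $|h|$, and a factor $|\log|h||$ appears exactly when the common ratio equals $1$.

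Running this machine gives the three remaining cases uniformly. In Case~II ($r>r_0$, $r_0\notin\N$, so $k=n$ and $\sigma>\theta$) the bulk ratio $a^{k+\sigma}/d>1$ is dominated by its last term, which evaluates to $|h|^{r_0-n}=|h|^\theta$, matching the tail $(a^k/d)^N\sim|h|^\theta$. In Case~III ($r_0=n+1\in\N$, $r>r_0$, $k=n$, $\sigma=1$) the bulk ratio is exactly $1$, so the bulk sum equals $N|h|\sim|h|\,|\log|h||$, again dominating the tail $\sim|h|$. In Case~IV ($r=r_0$, $k=n$, $\sigma=\theta$) the bulk ratio is again $1$, producing $|h|^\theta|\log|h||$. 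In every case, matching bulk and tail yields the stated modulus of continuity.

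For sharpness, I would combine an explicit lacunary construction with a Baire category argument. Taking $\alpha(y)=\sum_j a^{-jr}\sin(a^j y)$ makes $\beta$ itself a lacunary trigonometric series, and classical estimates for such series (after Zygmund and Hardy--Littlewood) show that its regularity cannot be improved beyond the value computed above. Genericity then follows from the fact that the assignment $\alpha\mapsto\beta$ is bounded and linear between the appropriate function spaces, so for each rational $\varepsilon>0$ the set $\{\alpha\in C^r:\beta\in C^{r_0+\varepsilon}\}$ is closed in $C^r$; the explicit lacunary example shows it has empty interior, and a countable union yields the residual set. I expect this last step to be the main obstacle: one must tune the lacunary example and verify that the logarithmic factors in Cases~III and~IV are not accidentally cancelled by interference between terms of the series.
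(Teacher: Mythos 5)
Your upper-bound argument is essentially the paper's: termwise differentiation down to order $n=\lfloor r_0\rfloor$, then the dyadic split at the scale $N(h)$ with $a^N|h|\sim 1$, bulk controlled by the H\"older modulus of $\alpha^{(n)}$ and tail by its sup norm. The paper writes out only Case IV and declares the rest analogous; your bookkeeping of the bulk ratio $a^{n+\sigma}/d$ against $1$ correctly reproduces all four moduli. For sharpness you take a different and heavier route. The paper avoids lacunary-series theory entirely: it picks $\alpha$ with $\alpha(0)=0$, $\alpha>0$ near $0^+$ and $\liminf_{x\to0}|\alpha(x)|/|x|^\theta>0$, so the first $N$ terms of $\beta(x)-\beta(0)$ add with one sign to at least $KN|x|^\theta$ while the tail is $O(|x|^\theta)$, giving $\beta\notin C^\theta$ at $0$; the functional equation $d\beta(x)=\beta(ax)+\alpha(x)$ then propagates the bad point backward to a dense set. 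Your lacunary example also works --- with $\alpha(y)=\sum_j a^{-jr_0}\sin(a^jy)$ the coefficient of $\sin(a^my)$ in $\beta$ is $(m+1)a^{-mr_0}/d$, so there is no destructive interference and the logarithmic loss is visible directly --- but it buys nothing over the elementary computation.

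The genuine gap is in the genericity step. The set $\{\alpha\in C^r:\beta\in C^{r_0+\varepsilon}\}$ is \emph{not} closed in $C^r$, and boundedness of $\alpha\mapsto\beta$ does not make it so: it is the preimage of the non-closed subspace $C^{r_0+\varepsilon}$ under a map that is bounded only into a weaker space, so along a sequence $\alpha_j\to\alpha$ with $\|\beta_j\|_{C^{r_0+\varepsilon}}\to\infty$ the limit $\beta$ can drop out of $C^{r_0+\varepsilon}$. The correct device --- and the one the paper uses --- is to view the set as the \emph{image} of a bounded linear operator: with $L(\beta)=d\beta-\beta\circ a$ one has $\alpha=L(\beta)$, so $\{\alpha: \beta\in C^{r_0+\varepsilon}\}$ is the image of $L$ defined on the Banach space of those $\beta\in C^{r_0+\varepsilon}$ with $L\beta\in C^r$ (graph norm). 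By the open mapping theorem such an image is either all of $C^r$ or of first category, and your (or the paper's) explicit example shows $L$ is not onto; taking the union over rational $\varepsilon>0$ of these first-category sets and passing to the complement yields the residual set. So replace the closedness claim by this surjective-or-meager dichotomy and the argument closes.
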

\begin{proof}
	We give the proof for Case IV, all other cases being analogous.
	
By term-wise differentiation we have that 
$$
\beta^{(n)}(y)=\frac{1}{d}\sum_{i\geq 0}\left(\frac{a^{n}}{d}\right)^i\alpha^{(n)}(a^iy)
$$ 
	 which is convergent because $r_0> n$.  Comparing the series for $\beta$ and $\beta^{(n)}$, clearly we can assume that $n=0$ because the argument for $n>0$ would be the same with $\beta^{(n)}$ in place of $\beta$.
	
	
	Let $A=\max|\alpha|$ and $C$ the $\theta$-H\"older constant for $\alpha$. Take $x\neq y$ and let $N$ be such that 
	$$\frac{1}{a^{N-1}}\leq|x-y|\leq\frac{1}{a^N}$$
	
	Then 
	$$|\beta(x)-\beta(y)|\leq\sum_{k=0}^{N-1}\frac{1}{a^{\theta k}}\left|\alpha(a^kx)-\alpha(a^ky)\right|+\sum_{k\geq N}\frac{1}{a^{\theta k}}\left|\alpha(a^kx)-\alpha(a^ky)\right|
	$$ 
	The first summand is smaller than $$C\sum_{k=0}^{N-1}\frac{1}{a^{\theta k}}a^{\theta k}|x-y|^\theta\leq CN|x-y|^\theta\leq C|x-y|^{\theta}|\left|\log|x-y|\right|$$
	 The second summand is smaller than 
	 $$
	 \frac{2A}{1-a^\theta}\frac{1}{a^{N\theta}}\leq C|x-y|^\theta
	 $$
	Hence we obtain the posited $x^\theta|\log x|$ modulus of continuity for $\beta$.
	
	On the other hand, if we assume, to simplify notation, that $\alpha(0)=0$, and say $\alpha(x)>0$ for $x>0$, and that $\liminf_{x\to 0}\frac{|\alpha(x)|}{|x|^\theta}>0$. Pick $\epsilon_0>0$ sufficiently small so that  $K=\inf_{|x|<\epsilon_0}({|\alpha(x)|}/{|x|^\theta})$ is positive. Then, taking $x>0$ very close to $0$ and $N>0$ first such that $a^Nx\geq \epsilon_0$, we obtain
	\begin{eqnarray*}
		|\beta(x)-\beta(0)|&\geq& \sum_{k=0}^{N-1}\frac{1}{a^{\theta k}}\alpha(a^kx)-\sum_{k\geq N}\frac{1}{a^{\theta k}}\alpha(a^kx)\\
		&\geq& \left(KN-\frac{2A}{1-a^\theta}\right)|x|^\theta \geq\left(KC_{\epsilon_0}|\log|x||-\frac{2A}{(1-a^\theta)\epsilon_0^\theta}\right)|x|^\theta
	\end{eqnarray*} So, by taking $x$ close enough to $0$ we see that $\beta$ is not $C^{\theta}$ at $0$.

Now notice that 
$$
\beta(x)=\frac{1}{d}\beta(ax)+\frac{1}{d}\alpha(x)
$$
 and $\alpha\in C^{r_0}$. Let $S$ be the set of $x$ such that $\beta$ is not $C^{r_0}$ at $x$. Then, from the above equation, if $ax \in S$ then $x\in S$, \ie $S$ is backward invariant (and non empty) and hence dense.

To show the genericity property we will use the following functional analysis Lemma (it is a concequence of the proof of the open mapping theorem.)

\begin{lemma}
	Let $X$ and $Y$ be Banach spaces and  $L:X\to Y$ be a bounded linear map then either $L$ is onto or the image of $L$ is a first category set.
\end{lemma}

Now consider $X=Y=C^\theta$  and $L(\beta)=d\beta-\beta\circ a$. We have that the image of $L$ is the set of $\alpha\in C^\theta$ such that the corresponding $\beta$ belongs to $C^\theta$. We just showed that $L$ is not surjective, hence the set of $\alpha$ such that the corresponding $\beta$ belongs to $C^\theta$ is a first category set and so its complement is a second category set. 
\end{proof}	

\begin{remark}	
	Notice that if $r_0\in\N$, then $d=a^{r_0}$. And we have that if $\beta\in C^{r_0}$ then we can differentiate the above equation and obtain that $\beta^{(r_0)}$ solves the equation 
	$$
	d\beta^{(r_0)}(x)- a^{r_0} \beta^{(r_0)}(ax)=\alpha^{(r_0)}(x)$$ meaning that $\frac{\alpha^{(r_0)}}{d}$ is cohomologous to $0$, which does not happen for generic $\alpha$.
\end{remark}

 
 With Lemma~6.4 at hand we return to discussion of Example~6.3.
 If we apply Theorem~\ref{rigexpthm} to $L$, $f$ and $r<r_0$ then the fibrations $p_i$ are trivial with point fibers.

 If $r\ge r_0$ we will have $p_1(x,y)=p_2(x,y)=y$, \ie fibrations with circle fiber. Let us show this fact.
 
 Differentiating $\psi_1(x,y)=\psi_2(x+\beta(y),y)$ with respect to $y$ variable yields 
 $$
 \partial_y\psi_1(x,y)=\partial_y\psi_2(x+\beta(y),y) +\partial_x\psi_2(x+\beta(y),y)\beta'(y)
 $$ 
 Notice that $\partial_x\psi_2(x+\beta(y),y)\in C^{r_0-1}$ and, in particular, it is continuous. 
 Let $$U=\{y:\partial_x\psi_2(x+\beta(y),y)\neq 0\;\mbox{for some}\;x\},$$ then $U$ is open and for $y\in U$ and the appropiate $x$, $$\beta'(y)=\frac{ \partial_y\psi_1(x,y)-\partial_y\psi_2(x+\beta(y),y)}{\partial_x\psi_2(x+\beta(y),y)}.$$ So, for $y\in U$ we obtain that that the right hand side is locally in $C^{r_0-1}$ and hence $\beta'$ is locally in $C^{r_0-1}$ for points in $U$. Hence by Lemma \ref{regularitybeta}, $U$ is empty and hence  $\frac{\partial}{\partial x}\psi_2(x+\beta(y),y)=0$ for all $x$ and a dense set of $y\in S^1$. We conclude that $\psi_2$ (and similarly, $\psi_1$) depends solely on the $y$-coordinate.

\end{example}

\begin{example}[Irreducible automorphism of an infratorus]\label{infratorusexample}
	We have explained in Remark~\ref{rem_irr} that (non-trivial) infratori do not support totally irreducible affine automorphisms. Here we show that one can still construct irreducible examples (which become reducible after passing to a finite iterate). 
	
	Define the expanding endomorphism of $\T^3$ by
	$$L=\left(\begin{matrix}
	0&0&3\\
	1&0&0\\
	0&1&0
	\end{matrix}\right)$$ 
	Note that $L^3$ is diagonal.
	Define the holonomy group $\{id, \gamma_1,\gamma_2,\gamma_3\}$ as follows
	$$\gamma_1=\left(\begin{matrix}
	1&0&0\\
	0&-1&0\\
	0&0&-1
	\end{matrix}\right)\;\;\;\;\;\gamma_2=\left(\begin{matrix}
	-1&0&0\\
	0&1&0\\
	0&0&-1
	\end{matrix}\right)\;\;\;\;\;\gamma_3=\left(\begin{matrix}
	-1&0&0\\
	0&-1&0\\
	0&0&1
	\end{matrix}\right)
	$$ 
	Finally let 
	$$v_1=\left(\frac{1}{2},0,\frac{1}{2}\right)^t\;\;\;\;v_2=\left(\frac{1}{2},\frac{1}{2},0\right)^t\;\;\;\;v_3=\left(0,\frac{1}{2},\frac{1}{2}\right)^t
	$$
	and $T_i(x)=\gamma_i(x)+v_i$, $i=1,2,3$. We let $\Gamma$ be the group of affine diffeomorphisms of $\T^3$ generated by the $T_i$'s. It is easy to see that, in fact, $\Gamma=\{Id_{\T^3}, T_1, T_2, T_3\}$ and, hence, $\Gamma$ acts freely on $\T^3$. 
	
	Finally $L\Gamma L^{-1}\subset\Gamma$ and, hence, induces an expanding endomorphism of the infratorus $\T^3/\Gamma$. Indeed, $L\circ T_1\circ L^{-1}=T_2+(1,0,0)^t$, $L\circ T_2\circ L^{-1}=T_3$ and $L\circ T_3\circ L^{-1}=T_1+(1,0,0)^t$.
	
\end{example}

\begin{example}[Seifert fibration]\label{seifertfibration}
	
Recall that in Theorem~\ref{rigexpthm} we assume that manifolds $M_i$ are homeomorphic to nilmanifolds. If $M_i$ are not homeomorphic to nilmanifolds then the construction of compact foliations in the proof of Theorem~\ref{rigexpthm} still works, but these foliations might fail to be fibrations. The example below illustrates this point.

Consider the Klein bottle $\K$ given as a quotient of the torus $\T^2=\R^2/\Z^2$ by the involution $T(x,y)=(x+\frac{1}{2},-y)$. We can also model $\K$ as the rectangle $[0,\frac{1}{2}]\times[-1/2,1/2]$ where the sides are identified by $(x,y)\to(x-1/2,-y)$ and $(x,y)\to(x,y+1)$. One can easily check that the expanding linear map 
$$
L=\left(\begin{matrix} 3&0\\0&2\end{matrix}\right)$$
induces an expanding map $L\colon\K\to\K$. We foliate $\K$ by horizontal curves $\{y=\textup{const}\}$. More precisely, for every $y\in [-1/2,1/2]$ define the circles 
$$
\cC_y=\left\{(t,y):t\in\left[0,\frac{1}{2}\right]\right\}\cup\left\{(t,-y):t\in\left[0,\frac{1}{2}\right]\right\}
$$ 
Notice that if $y\neq 0, \frac{1}{2}$ then $\cC_y$ consists of two segments on the rectangle. For $y=0$, $\cC_0$ is a singular curve that consists of only one segment $[0,1/2]\times\{0\}$ and hence has half of the length of the other leaves. The same happens for $y=\frac{1}{2}$, $\cC_{\frac{1}{2}}$ is a singular curve that consist of only one segment $[0,1/2]\times\{1/2\}\sim [0,1/2]\times\{-1//2\}$. Moreover, notice that $\cC_y=\cC_{-y}$. 

We have defined a foliation on $\K$ which is obviously not a fibration. Indeed, the quotient map $\pi:\K\to S^1/[y\sim -y]$ yields an orbifold structure on $S^1/[y\sim -y]$.

Notice that $L(\cC_y)=\cC_{(2y\mod 1)}$, hence, the foliation is $L$-invariant.

We now define expanding maps $f_i\colon\K \to \K$, $i=1,2$. We let  $f_i(x,y)=(g_i(x),2y)$, where $g_i(x)=3x+\alpha_i(x)$ with $\alpha_i(0)=0$ and $\alpha_i(x+\frac{1}{2})=\alpha_i(x)$ for every $x\in S^1$.  Such formulae define maps on the Klein bottle which are homotopic to $L$. Moreover, these maps are expanding provided that $C^1$ norms of $\alpha_i$ are sufficiently small. Also notice that $f_i$ preserve the foliation $\cC$.


The conjugacy $h$ between $f_1$ and $f_2$, $h\circ f_1=f_2\circ h$ has the form $h(x,y)=(h_0(x),y)$, where $h_0\circ g_1=g_2\circ h_0$.  Notice that by the symmetries of $f_i$, $h_0(x+1/2)=h_0(x)+1/2$ and hence $h$ is indeed the conjugacy on the Klein bottle. We can assume that $\alpha_i$ are chosen so that $h_0$ and, hence, $h$ is not $C^1$.

Take any $\phi_0:\R\to\R$ such that $\phi_0(y+1)=\phi_0(y)$ and $\phi_0(-y)=\phi_0(y)$ \eg $\phi_0(y)=\cos 2\pi y$. Then $\phi(x,y)=\phi_0(y)$ defines a function on $\K$ and $\phi\circ h=\phi$. On the other hand if $\phi_1=\phi_2\circ h$ for some smooth functions $\phi_1$ and $\phi_2$ then both $\phi_1$ and $\phi_2$ must be constant on the leaves of $\cC$ because $h_0$ is non-differentiable on a dense set of $x\in S^1$. So defining $\phi_i=\phi$ for $i=1,2$ we are in the hypothesis of the Theorem~\ref{rigexpthm}. We conclude that $\cC$ is precisely the compact foliation given by the construction in the proof of Theorem~\ref{rigexpthm}.

\end{example}

\begin{example}[Exotic examples]\label{ex_exotic}

Here we explain that the fiber bundle structure given by Theorem~\ref{rigexpthm} could be non-trivial even in the case when the ambient manifold is an exotic torus. Examples of expanding maps on exotic tori were first constructed by Farrell and Jones~\cite{FJ} in dimensions $d \ge 7$. We explain how, with some extra care, the beautiful construction of Farrell-Jones can be adapted to our setting.

Let $\Sigma^d$ be a $d$-dimensional, $d\ge 7$, homotopy sphere and let $\T^d$ be the standard torus. A simple way of constructing an exotic torus is by taking the connected sum $\T^d\#\Sigma$. If $\Sigma^d$ is not homeomorphic to the standard sphere then $\T^d\#\Sigma^d$ is not homeorphic to $\T^d$~\cite[\S15A]{wall}. Further, it is well-known that for $d\ge 7$, one can realize $\T^d\#\Sigma^d$ as $\T^d$ with a disk $\D^d$ removed and then glued back in using an orientation-preserving ``twist diffeomorphism'' $\phi\in\textup{Diff}(\bS^{d-1})$.
$$
\T^d\#\Sigma^d=(\T^d\backslash \D^d)\cup_\phi\D^d
$$
It is easy to check that if $\phi'$ is isotopic to $\phi$ then the corresponding exotic tori are diffeomorphic.

We view the sphere $\bS^{d-1}=\partial\D^d$ as the standard sphere in $\R^d$
$$
\bS^{d-1}=\{(x_1, x_2, \ldots x_d): \sum_ix_i^2=1\}
$$
Cerf~\cite{C} showed that for every homotopy sphere $\Sigma^d$ one can realize $\T^d\#\Sigma^d$ using a diffeomorphism $\phi\colon\bS^{d-1}\to\bS^{d-1}$ which preserves the first coordinate, \ie has the form 
$$
\phi(x_1,x_2, x_3\ldots x_d)=(x_1, x_2', x_3'\ldots x_d')
$$
Then $\phi$ can viewed as a path of diffeomorhisms and gives a representative of an element of $\pi_1(\textup{Diff}(\bS^{d-2}))$. More generally, one can consider the space $\textup{Diff}_k(\bS^{d-1})$ of orientation preserving diffeomorphisms which preserve first $k$ coordinates $x_1, x_2,\ldots x_k$ and, hence, give an element of $\pi_k(\textup{Diff}(\bS^{d-1-k}))$. Isotopy classes of such diffeomorphism form a subgroup $\Gamma_{k+1}^d$ of the group of isotopy classes of all orientation diffeomorphisms $\Theta_d$ (which is identified with the group of homotopy spheres equipped with the connected sum operation). It is known that $\Gamma_{k+1}^d$ is non-trivial in a certain range of pairs $(k,d)$~\cite{ABK2}.

Now we formulate the extra property of $\phi\in\textup{Diff}_k(\bS^{d-1})$ which we will need (and which is not needed in the original Farrell-Jones construction). Consider the obvious homomorphism
$$
\gamma\colon \pi_k(\textup{Diff}(\bS^{d-1-k}))\to\pi_0(\textup{Diff}(\bS^{d-1}))\simeq\Theta_d
$$
\begin{lemma}\textup{(\cite[Proposition 1.2.3; \S1.3]{ABK})}
There exists pairs $(k,d)$\footnote{For specific arithmetic conditions see~\cite[Corollary~1.3.6]{ABK}.} and a torsion element $[\phi]\in \pi_k(\textup{Diff}(\bS^{d-1-k}))$, $[\phi^p]=0$, whose image in $\pi_0(\textup{Diff}(\bS^{d-1}))$ non-trivial, \ie $\gamma[\phi]\neq 0$.
\label{lemmaABK}
\end{lemma}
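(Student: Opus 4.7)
The plan is to directly import Lemma~\ref{lemmaABK} from the work of Antonelli, Burghelea and Kahn, whose Proposition 1.2.3 and computations in §1.3 are tailored for precisely this purpose. I would not attempt a self-contained differential-topological construction here, because the statement is of a rather different flavor from the rest of the paper: it is a fact about higher homotopy of diffeomorphism groups of spheres, detected against the finite abelian group $\Theta_d$ of homotopy spheres.

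First I would interpret $\gamma$ geometrically. Iterating the restriction fibrations
$$
\textup{Diff}_{j+1}(\bS^{d-1})\longrightarrow \textup{Diff}_{j}(\bS^{d-1})\longrightarrow \textup{Diff}(\bS^{d-1-j})
$$
and passing to connecting homomorphisms gives a map $\pi_k(\textup{Diff}(\bS^{d-1-k}))\to\pi_0(\textup{Diff}(\bS^{d-1}))\simeq\Theta_d$, which is precisely $\gamma$. Its image lands in the Gromoll filtration piece $\Gamma_{k+1}^d\subset\Theta_d$ singled out in the Farrell--Jones construction; the adjoint of a based $k$-parameter family of diffeomorphisms of $\bS^{d-1-k}$ is exactly a diffeomorphism of $\bS^{d-1}$ fixing the first $k$ coordinates, matching the definition of $\Gamma_{k+1}^d$.

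Second, I would select $(k,d)$ satisfying the arithmetic conditions of \cite[Corollary 1.3.6]{ABK}. Via Morlet's equivalence one may replace $\textup{Diff}(\bS^{n})$ with (a twist of) $\Omega^{n+1}(\textup{Top}(n)/\textup{O}(n))$ and reduce the computation of $\pi_k(\textup{Diff}(\bS^{d-1-k}))$ to a combination of stable homotopy theory (the image of $J$) and surgery-theoretic corrections. Within this framework, one produces a specific class $[\phi]$ whose order divides some integer $p$ and whose image in $\Theta_d$ is detected by an Eells--Kuiper-type invariant and is therefore non-zero.

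The main obstacle—and the technical core of \cite[§1.3]{ABK}—is obtaining simultaneous control of both sides of $\gamma$: the class $[\phi]$ must be torsion in the highly non-trivial group $\pi_k(\textup{Diff}(\bS^{d-1-k}))$ while its image in $\Theta_d$ survives non-trivially. Generic torsion on the source side could easily map to zero, so one needs to track exactly which torsion classes are detected, and this is done through compatibility of the spectral sequence for the tower of fibrations above with characteristic-number invariants on $\Theta_d$. Once such a pair $(k,d)$ and class $[\phi]$ are found, the lemma is immediate.
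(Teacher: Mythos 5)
Your approach matches the paper exactly: the lemma is stated there with no proof, only the citation to Antonelli--Burghelea--Kahn (Proposition 1.2.3 and \S 1.3), which is precisely what you propose to do. Your additional exposition of $\gamma$ as the composite of connecting maps for the restriction fibrations and of the role of the Gromoll filtration is consistent with how the lemma is used in Example~\ref{ex_exotic}, so importing the result as stated is entirely appropriate.
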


We proceed to briefly recall the Farrell-Jones construction~\cite{FJ, F} and then explain how the above lemma allows to produce exotic example which admit invariant fibrations with $(d-k)$-dimensional fibers. The construction yields a $\times s$-map on $\pi_1(\T^d\#\Sigma^d)$ for a sufficiently large $s$ which also must satisfy certain congruence arithmetic condition.

We pick a $\phi\in\textup{Diff}_k(\bS^{d-1})$ given by Lemma~\ref{lemmaABK} and realize $\T^d\#\Sigma^d$  by removing a disk $\D^d$ from $\T^d$ and then attaching it back with a twist $\cup_\phi \D^d$. Given an integer $s\ge 2$ consider the manifold $M_s$ which is diffeomorphic to $\T^d\#\Sigma^d$ and which is obtained by removing the conformally scaled disk $\frac1s\D^d$ and then attaching it back with a twist $\cup_\phi \frac1s\D^d$. Because of our choice of $\phi$ both manifolds are naturally total spaces of smooth torus bundles
$$
\T^{d-k}\to\T^d\#\Sigma^d\stackrel{p_1}{\longrightarrow}\T^k,\,\,\,  \T^{d-k}\to M_s\to\T^k
$$
where the base space $\T^k$ corresponds to the first $k$ coordinates fixed by $\phi$.

Let $N\to\T^d\#\Sigma^d$ be the locally isometric cover which induces $\times s$ map on the fundamental group. And let $N_s$ be a copy of $N$ with the Riemannian metric conformally scaled by $\frac1s$. Clearly both $N_s$ and $N$ smoothly fiber over $\T^k$. Then the posited expanding map is the composition
$$
\T^d\#\Sigma^d\stackrel{F_s}{\longrightarrow} M_s\stackrel{G_s}{\longrightarrow} N_s\stackrel{\times s}{\longrightarrow} N\to \T^d\#\Sigma^d
$$
The diffeomorphisms $F_s$ and $G_s$ are constructed with a uniform (in $s$) lower bound on minimal expansion. It immediately follows that for sufficiently large $s$ the composite map $f\colon \T^d\#\Sigma^d\to \T^d\#\Sigma^d$ is uniformly expanding.

The diffeomorphism $F_s$ which ``shrinks'' the exotic sphere is constructed using the ``commutator trick'' and it is easy to check that $F_s$ is fiber preserving and fibers over the identity map $id_{\T^k}$. We claim that the same is true for the diffeomorphism $G_s$. The purpose of $G_s$ is to introduce a certain number of scaled exotic spheres $\cup_\phi\frac1s\D^d$, and, thus, create $N_s$. These exotic spheres are introduced in groups of size $b$ which is divisible by the order of $\phi$ in $\Theta_d$~\cite[Lemma 3]{FJ}. Alternatively one can think of $G_s^{-1}$ as a diffeomorphism which removes exotic spheres in groups of size $b$. To remove one such group one uses diffeomorhism given by the isotopy between $\phi^b$ and $id_{\bS^{d-1}}$. A priori such an isotopy does not preserve the fibers. However, we can require $b$ to be divisible by $p$ which is given by Lemma~\ref{lemmaABK}. Then $\phi^b$ is isotopic to $id_{\bS^{d-1}}$ in the space $\textup{Diff}_k(\bS^{d-1})$ and hence the resulting diffeomorphism $G_s\colon M_s\to N_s$ is fiber-preserving and fibers over $id_{\T^k}$. Finally we notice that the covering map $N\to\T^d\#\Sigma^d$ and the expanding map $\times s\colon N_s\to N$ are fiber preserving as well. We conclude that the expanding map $f$ fibers over $\times s$ map on $\T^k$.
$$
\xymatrix{
\T^d\#\Sigma^d\ar_{p_1}[d]\ar^f[r] & \T^d\#\Sigma^d\ar_{p_1}[d]\\
\T^k\ar^{\times s}[r] & \T^k
}
$$
The same diagram holds for the standard $\times s$ expanding map $E_s\colon \T^d\to \T^d$
$$
\xymatrix{
\T^d\ar_{p_2}[d]\ar^{E_s}[r] & \T^d\ar_{p_2}[d]\\
\T^k\ar^{\times s}[r] & \T^k
}
$$
and it is easy to see that the conjugacy $h\colon \T^d\#\Sigma^d\to\T^d$, $h\circ f= E_s\circ h$, maps fibers to fibers and the induced conjugacy on $\T^k$ is identity, \ie $\bar h=id_{\T^k}$. 

We claim that one can perturb $f$ along the fibers so that the fibrations $p_1$ and $p_2$ are precisely the ones appearing in the Theorem~\ref{rigexpthm}. Indeed consider the restrictions of $f_p\colon \T^{d-k}_p\to\T^{d-k}_p$ and $E_s\colon \T^{d-k}_{h(p)}\to\T^{d-k}_{h(p)}$ to the fibers through the corresponding fixed points. Denote by $p_i'$, $i=1,2$ the fibrations produced by Theorem~\ref{rigexpthm} applied to $f$ and $E_s$. Then the fibers of $p_i'$ refine the fibers of $p_i$ and, hence, we can restrict $p_1'$ and $p_2'$ to $\T^{d-k}_p$ and $\T^{d-k}_{h(p)}$, respectively. Denote by $\ell$ the dimension of the base space for these restricted fibrations. Recall that the induced conjugacy on the base space is smooth. It follows that $\wedge^\ell Df_p$ has $s^\ell$ as an eigenvalue. Hence we perturb $f$ in the neighborhood of $p$ so that $\wedge^\ell Df_p$ does not have $s^\ell$ for an an eigenvalue for all $\ell=1,2,\ldots d-k$. Then we have $\ell=0$ which means that $p_i'=p_i$.

\begin{remark} Similarly, one can perturb $f$ along the fibers to an expanding map $f_2\colon  \T^d\#\Sigma^d\to  \T^d\#\Sigma^d$ such that both $p_i'$ given by Theorem~\ref{rigexpthm} when applied to $f$ and $f_2$ are equal to $p_1$
\end{remark}

\begin{remark} An easier way of constructing an exotic expanding map with non-trivial fibration would be to take the product $f\times L$ of an exotic expanding map $f\colon \T^d\#\Sigma^d\to \T^d\#\Sigma^d$ and a linear expanding map $L\colon\T^m\to \T^m$. Smoothing theory implies that $\T^d\#\Sigma^d\times\T^m$ is not diffeomorphic to $\T^{d+m}$. Then $\T^d\#\Sigma^d\times\T^m$ fibers over $\T^m$ and one can arrange this fibration to be the fibration given by Theorem~\ref{rigexpthm} in a similar way. The example which we described above is more interesting because the smooth structure on $\T^d\#\Sigma^d$ is {\it irreducible}, that is, $\T^d\#\Sigma^d$ is not diffeomorphic to a smooth product of two lower dimensional smooth closed manifolds~\cite[Proposition 1.3]{FG2}.
\end{remark}

\end{example}

\section{Factor version}

We formulate the following generalization of Theorem~\ref{rigexpthm}, where we replace the topological conjugacy by a continuous factor map. The proof follows the same lines with routine modifications and we omit it.

\begin{theorem}
	\label{rigexpthmfactor2} 
	Assume that $M_i$, $i=1,2$, are closed manifolds homeomorphic to a nilmanifold. Let $f_i\colon M_i\to M_i$, $i=1,2$, be $C^r$ smooth, $r\geq 1$, expanding maps and assume that $f_2$ is a topological factor of $f_1$, that is, there exists a continuous map $h:M_1\to M_2$ such that $h\circ f_1=f_2\circ h$.  
	
	Then there exist a $C^r$ expanding map $\bar f:\bar M\to\bar M$  where $\bar M$ is homeomorphic to a nilmanfold, and $C^{r}$ fibrations (with connected fiber homeomorphic to a nilmanifold) $p_i:M_i\to \bar M$, $i=1,2$, such that 
	$$
	p_i\circ f_i=\bar f\circ p_i,\;\;\;\;\; i=1,2
	$$
	Further the map $h$ sends fibers to fibers  
	$$
	p_2\circ h=p_1
	$$
	and the fibrations $p_i$, $i=1,2$, have the following property. If $\phi_i:M_i\to\R$, $i=1,2$, are $C^r$ smooth functions such that for every periodic point  $x\in Fix(f_1^n)$
	$$
	\sum_{k=0}^{n-1}\phi_1(f_1^k(x))=\sum_{k=0}^{n-1}\phi_2(f_2^k(h(x)))
	$$
	then there exist a $C^{r}$ function $\bar\phi:\bar M\to\R$, such that $\phi_i$ is cohomologous to $\bar\phi\circ p_i$ over $f_i$.
\end{theorem}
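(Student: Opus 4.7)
My plan is to repeat the proof of Section~\ref{sec_nilmanifold} in parallel for $f_1$ and $f_2$, using the same coupling space $V=\{(\psi_1,\psi_2)\in C^r(M_1)\times C^r(M_2):\psi_1=\psi_2\circ h\}$ and its filtration $V^j$. Since $h$ is surjective (which follows from $f_2$ being topologically transitive and $h$ semiconjugating $f_1$ to $f_2$), the composition map $V_2\to V_1$, $\psi_2\mapsto\psi_2\circ h$, is a bijection, and the objects $E_i$, $\cP_i^j$, $\cF_i^j$ are well-defined. The only ingredient of Section~\ref{sec_nilmanifold} not directly at hand is the conjugation isomorphism between $A^*\Gamma_1^j$ and $A^*\Gamma_2^j$ coming from $\tilde h^{-1}$. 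I would replace it by a homomorphism $h_\#\colon A^*\Gamma_1^j\to A^*\Gamma_2^j$ built from the induced homomorphism $h_*\colon\Gamma_1\to\Gamma_2$ on fundamental groups: since the lifted maps $\tilde f_i$ are diffeomorphisms of the universal covers, the formula
$$
h_\#(\tilde f_1^{-1}\circ S\circ\tilde f_1\circ R):=\tilde f_2^{-1}\circ h_*(S)\circ\tilde f_2\circ h_*(R),\qquad S\in\Gamma_1^j,\ R\in\Gamma_1,
$$
is a well-defined homomorphism, and the intertwining $\tilde h\circ\tilde f_1=\tilde f_2\circ\tilde h$ yields the equivariance $\tilde h\circ T=h_\#(T)\circ\tilde h$ for every $T\in A^*\Gamma_1^j$ by direct computation. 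With this in hand, Lemmas~\ref{lemma_commute}--\ref{lemma_invariance}, Corollary~\ref{cor_invariance}, the upgrading of Subsection~\ref{sec_up_foliation}, and the Ratner/closed-subgroup analysis culminating in Lemmas~\ref{lemma_ratner}--\ref{lemma_open_set} all apply verbatim, producing $C^r$ fibrations $p_i\colon M_i\to\bar M_i$ with connected nilmanifold fibers together with induced $C^r$ expanding maps $\bar f_i\colon\bar M_i\to\bar M_i$.

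Next I would identify $\bar M_1$ with $\bar M_2$. Continuity of $h$ together with $\psi_1=\psi_2\circ h$ implies $h(\cF_1(x))$ is connected and lies in $\cP_2(h(x))$, hence in $\cF_2(h(x))$; this gives a continuous $\bar h\colon\bar M_1\to\bar M_2$ with $\bar h\circ p_1=p_2\circ h$ and $\bar h\circ\bar f_1=\bar f_2\circ\bar h$. I claim $\bar h$ is a $C^r$ diffeomorphism, after which setting $\bar M:=\bar M_2$, $\bar f:=\bar f_2$, and replacing the original $p_1$ by $\bar h\circ p_1$ yields the statement $p_2\circ h=p_1$. Surjectivity of $\bar h$ is immediate from surjectivity of $h$. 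For injectivity, one uses that $V_1$ equals $V_2\circ h$, so $\cP_1(x)=h^{-1}(\cP_2(h(x)))$; thus $\bar h(p_1(x))=\bar h(p_1(y))$ forces $y\in\cP_1(x)$, and a connectedness argument exploiting the nilmanifold-fiber structure of the algebraic factor semiconjugate to $h$ places $y$ in the same $\cF_1$-leaf as $x$. Smoothness of $\bar h$ and of its inverse follows from the foliation-chart argument of the last paragraphs of Sections~\ref{sec_fib} and~\ref{sec_nilmanifold}, applied at points where both $E_1$ and $E_2$ attain their minimal dimension.

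The matching property is then proved exactly as in Subsection~4.2: normalize via Corollary~\ref{cor} to obtain $\hat\phi_i$ with $\cL_{\hat\phi_i,f_i}1=1$; the identity $(\cL_{\hat\phi_2,f_2}v)\circ h=\cL_{\hat\phi_2\circ h,f_1}(v\circ h)$ at $v=1$ forces $\hat\phi_2\circ h=\hat\phi_1$ by the uniqueness clause of Theorem~\ref{thm_SK}, so $(\hat\phi_1,\hat\phi_2)\in V$. Both $\hat\phi_i$ are constant on $\cF_i$-leaves by construction and descend to $C^r$ functions on $\bar M_i$, which the identification $\bar h$ glues into a single $\bar\phi\colon\bar M\to\R$. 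The main obstacle is the injectivity step for $\bar h$: in Theorem~\ref{rigexpthm} the identification $\bar M_1\cong\bar M_2$ is essentially free from invertibility of $h$, whereas here one must show that the factor map does not collapse distinct $\cF_1$-leaves into a single $\cF_2$-leaf; this uses the fact that algebraic factor maps between nilmanifolds have connected fibers (the kernel of a surjective homomorphism of simply connected nilpotent Lie groups is connected), combined with the semiconjugacy to the algebraic setting provided by the classification of expanding maps.
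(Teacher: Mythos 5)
The paper offers no proof of this theorem beyond the remark that it ``follows the same lines with routine modifications,'' and your proposal is a reasonable attempt to supply those modifications: the architecture (run the construction of Section~\ref{sec_nilmanifold} separately for $V_1$ and $V_2$, replace conjugation by the induced map $h_\#$ built from $h_*$, then identify the two bases via $\bar h$) is the right one, and you correctly isolate the injectivity of $\bar h$ as the genuinely new point. But your resolution of that point has a real gap. The fact you invoke --- ``algebraic factor maps between nilmanifolds have connected fibers'' --- is false: for a surjective homomorphism $\theta\colon N_1\to N_2$ with $\theta(\Gamma_1)\subset\Gamma_2$, the kernel of $\theta$ is indeed connected, but the fiber of the induced map $N_1/\Gamma_1\to N_2/\Gamma_2$ over the identity coset is the image of $\theta^{-1}(\Gamma_2)$, whose connected components are indexed by $\Gamma_2/\theta(\Gamma_1)$. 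Connectedness of fibers is therefore equivalent to surjectivity of $h_*\colon\pi_1(M_1)\to\pi_1(M_2)$, which does not follow from surjectivity of $h$. A concrete counterexample to your injectivity claim, and in fact to the theorem as literally stated, is $M_1=M_2=S^1$ with $f_1=f_2=h=(\times 2)$: here $h\circ f_1=f_2\circ h$, $V_1$ consists of the $C^r$ functions of period $1/2$, $V_2=C^r(S^1)$, both $\cF_i$ are point foliations, $p_1=p_2=\mathrm{id}$, and $\bar h=h$ is a non-injective local diffeomorphism; moreover no choice of $\bar M$ with connected-fiber fibrations satisfying $p_2\circ h=p_1$ is compatible with the matching property (take $\phi_2$ not cohomologous to a constant and $\phi_1=\phi_2\circ h$ to rule out $\bar M=\{pt\}$). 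So either ``$h_*$ surjective on $\pi_1$'' must be added as a hypothesis (it holds in the application of Section~8), or the conclusion must allow $\bar h$ to be a finite covering and the fibers of $p_1$ to have finitely many components; in either case your argument needs this input made explicit, since the same surjectivity is what makes $h_\#\colon A^*\Gamma_1^j\to A^*\Gamma_2^j$ onto and keeps the invariance lemmas for $i=2$ symmetric with those for $i=1$.

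Two smaller corrections. First, surjectivity of $h$ does not follow from topological transitivity of $f_2$: the constant map onto a fixed point of $f_2$ is a semiconjugacy, so surjectivity must be taken as part of the definition of ``factor.'' Second, the identity $(\cL_{\hat\phi_2,f_2}v)\circ h=\cL_{\hat\phi_2\circ h,f_1}(v\circ h)$ fails for a factor map: the right-hand side sums over the $\deg f_1$ preimages of $x$ while the left-hand side sums over the $\deg f_2$ preimages of $h(x)$. When $h$ restricted to $f_1$-fibers is a $k$-to-one surjection onto $f_2$-fibers with $k=\deg f_1/\deg f_2$ (again a consequence of $\pi_1$-surjectivity), one gets $\cL_{\hat\phi_2\circ h,f_1}1=k$, so the correct conclusion is $\hat\phi_1=\hat\phi_2\circ h-\log k$. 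This is harmless for the statement, since the pair $(\hat\phi_1,\hat\phi_2-\log k)$ still lies in $V$, but your deduction of $\hat\phi_2\circ h=\hat\phi_1$ as written is incorrect.
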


Using Theorem 7.1 one can naturally study regularity properties of factors maps. We proceed to describe an application.

Let $M_1=N\times M_2$, where $N$ and $M_2$ are nilmanifolds, and let $L\colon M_1\to M_1$ be a product expanding map $L=(A,B)$. Then, clearly, $L$ factors over $B$. Hence if $f_1$ is an expanding map homotopic to $L$ and $f_2$ is an expanding map homotopic to $B$ then $f_1$ factors over $f_2$: $h\circ f_1=f_2\circ h$. 

To define nice invariants of smooth conjugacy we need to introduce a restriction on $L$ and $f_1$. Namely, we assume that the maximal expansion of $A$ is greater than the minimal expansion of $B$. Then the ``vertical foliation'' $N\times \{x\}$, $x\in M_2$, is a weakly expanding foliation. It is easy to see, that for any sufficiently $C^1$ small perturbation $f_1$ of $L$ the weakly expanding foliation survives as an $f_1$-invariant foliation $W^{wu}$.

\begin{corollary} Consider $L, f_1$, $f_2$ are $C^{r+1}$ expanding maps and $h$ is the factor map, $h\circ f_1=f_2\circ h$. Assume that $f_1$ belongs to a sufficiently small $C^1$ neighborhood of $L$. Also assume that $f_2$ is very non-algebraic. If for any periodic point $x=f_1^k(x)$
$$
\frac{\textup{Jac}(f_1^k(x))}{\textup{Jac}(f_1|_{W^{wu}}^k(x))}=\textup{Jac}(f_2^k(h(x))
$$
then the factor map $h$ is $C^r$ smooth.
\end{corollary}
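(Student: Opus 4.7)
The strategy is to apply the factor version Theorem~\ref{rigexpthmfactor2} to $(f_1,f_2,h)$ and then rule out any non-trivial common factor by exploiting the very non-algebraic hypothesis on $f_2$.

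First I would apply Theorem~\ref{rigexpthmfactor2} at regularity $r$ to obtain $C^r$ fibrations $p_i\colon M_i\to\bar M$, a $C^r$ expanding map $\bar f\colon \bar M\to\bar M$, with $p_i\circ f_i=\bar f\circ p_i$ and $p_1=p_2\circ h$. A key preliminary observation is that since $L=(A,B)$ already factors over $B$ via the vertical fibration $\{N\times\{\cdot\}\}$, structural stability together with uniqueness of the dominated splitting identifies the fibers of $h$ with the leaves of $W^{wu}$; in particular $\ker Dh=E^{wu}$ wherever $h$ is differentiable, and $W^{wu}$ sub-foliates the fibers of $p_1$. The plan then reduces to showing $\dim\bar M=\dim M_2$; once that is done, $p_2$ is a $C^r$ diffeomorphism and $h=p_2^{-1}\circ p_1$ is $C^r$.

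Second, I would feed the periodic Jacobian hypothesis into the matching statement of Theorem~\ref{rigexpthmfactor2} with $\phi_2=\log\textup{Jac}(f_2)$ (which is $C^r$ since $f_2\in C^{r+1}$) and $\phi_1=\log\textup{Jac}(f_1)-\log\textup{Jac}(f_1|_{W^{wu}})$. The hypothesized identity at periodic points is precisely $\sum_k\phi_1(f_1^k x)=\sum_k\phi_2(f_2^k h(x))$ for $x\in\textup{Fix}(f_1^n)$, so Theorem~\ref{rigexpthmfactor2} produces a $C^r$ function $\bar\phi\colon\bar M\to\mathbb R$ such that $\log\textup{Jac}(f_2)$ is $f_2$-cohomologous to $\bar\phi\circ p_2$. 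From here one repeats verbatim the closing fiber-integration argument from the proof of Corollary~\ref{rigexpcor}, this time on the $M_2$ side: assuming $m=\dim M_2-\dim\bar M>0$ and rescaling the Riemannian metric along $\ker Dp_2$ to absorb the transfer function into the fiber volume, integration over a $p_2$-fiber yields $e^{\bar\phi\circ p_2}=\boldsymbol d$, where $\boldsymbol d$ is the absolute value of the degree of $f_2$ between fibers of $p_2$. Consequently $\textup{Jac}(f_2^k|_{\ker Dp_2})(y)=\boldsymbol d^k$ at every periodic $y=f_2^k y$, so $\pm\boldsymbol d^k$ is an eigenvalue of $\bigwedge^m Df_2^k(y)$, contradicting the very non-algebraic hypothesis on $f_2$. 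Hence $m=0$ and the corollary follows.

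The main obstacle is the regularity of $\phi_1$: $W^{wu}$ is only H\"older transverse to itself in general, so $\log\textup{Jac}(f_1|_{W^{wu}})$ is a priori only H\"older, whereas Theorem~\ref{rigexpthmfactor2} is formally stated for $C^r$ potentials. This is circumvented by exploiting that $W^{wu}$ is a subfoliation of the $C^r$ fibration $p_1$: the \emph{transverse} Jacobian $\log\textup{Jac}(f_1)-\log\textup{Jac}(f_1|_{\ker Dp_1})$ equals $\log\textup{Jac}(\bar f)\circ p_1$ up to an $f_1$-coboundary, and the residual H\"older difference $\log\textup{Jac}(f_1|_{\ker Dp_1})-\log\textup{Jac}(f_1|_{W^{wu}})$ has vanishing periodic sums (the $f_1^n$-orbit of a periodic $x$ returns to its own $W^{wu}$-leaf and its own $p_1$-fiber simultaneously), so by Livshits it is a H\"older coboundary and can be absorbed. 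After this replacement $\phi_1$ has a genuinely $C^r$ representative in its cohomology class, after which Theorem~\ref{rigexpthmfactor2} applies directly and the argument goes through as sketched above.
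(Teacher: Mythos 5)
Your overall architecture coincides with the paper's: apply Theorem~\ref{rigexpthmfactor2} at regularity $r$, feed in $\phi_2=\log\textup{Jac}(f_2)$ and $\phi_1=\log\textup{Jac}(f_1)-\log\textup{Jac}(f_1|_{W^{wu}})$, and run the fiber-integration argument of Corollary~\ref{rigexpcor} on the $M_2$ side to contradict the very non-algebraic hypothesis when $\dim\bar M<\dim M_2$. That part of the proposal is sound.

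The gap is in your treatment of the regularity of $\phi_1$, which you correctly flag as the main obstacle but then resolve incorrectly. You claim that $\log\textup{Jac}(f_1|_{\ker Dp_1})-\log\textup{Jac}(f_1|_{W^{wu}})$ has vanishing periodic sums. It does not: since $E^{wu}\subset\ker Dp_1$ is $Df_1$-invariant, at a periodic point $x=f_1^n(x)$ the sum of this difference along the orbit equals $\log\bigl|\det\bigl(Df_1^n(x)|_{\ker Dp_1(x)/E^{wu}(x)}\bigr)\bigr|$, the log-Jacobian of the expanding return map on the $m$-dimensional quotient bundle, which is strictly positive whenever $m=\dim M_2-\dim\bar M>0$ --- that is, precisely in the case you are trying to rule out. (That the orbit returns to its own $W^{wu}$-leaf and its own $p_1$-fiber simultaneously only says both quantities are Jacobians of return maps; it does not make them equal.) So this difference is not a coboundary and cannot be absorbed; moreover, even the other bracket of your decomposition is problematic because $\ker Dp_1$ is only $C^{r-1}$. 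The paper instead proves directly that $\log\textup{Jac}(f_1|_{W^{wu}})$ is $C^r$: lift to the universal cover, where $\tilde f_1$ is invertible, so the complementary strong unstable foliation $\tilde W^{uu}$ is defined by the cone argument and is $C^{r+1}$ by the $C^r$ Section Theorem; in coordinates adapted to $\tilde W^{uu}$ the differential is upper triangular, whence $\log\textup{Jac}(\tilde f_1|_{\tilde W^{wu}})=\log\textup{Jac}(\tilde f_1)-\log\textup{Jac}(\tilde f_1|_{\tilde W^{uu}})$ is $C^r$ and descends to $M_1$. You need this (or an equivalent) argument to make $\phi_1$ an admissible potential for Theorem~\ref{rigexpthmfactor2}.
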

The proof is very similar to the proof of Corollary~\ref{rigexpcor} and we merely provide a sketch. Also one can replace the very non-algebraic assumption on $f_2$ by asking $f_2$ to be an irreducible toral diffeomorphism and assuming that the entropy maximizing measure for $f_2$ is not absolutely continuous.

\begin{proof}[Sketch of the proof] 
Let $p_i\colon M_i\to\bar M_i$ be fibrations given by Theorem~\ref{rigexpthmfactor2} when applied to $f_i$ and $r$. If $\dim \bar M=\dim M_2$ then $p_2$ is a difeomorphism and hence we have that $h=p_2^{-1}\circ p_1$ is $C^r$. 

Hence we need to rule out the possibility that $\dim \bar M<\dim M_2$, \ie the case when the fiber of $p_2$ has dimension $\ge 1$. In this case, following the proof of Corollary~\ref{rigexpcor},  we can apply Theorem~\ref{rigexpthmfactor2} to $\log\textup{Jac}(f_2)$ to conclude that $\log\textup{Jac}(f_2|_{\textup{ker}(p_2)})$ is cohomologous to a function which is constant along the fibers of $p_2$ which yields a contradiction, again, similarly to the proof of Corollary~\ref{rigexpcor}.

One subtle detail, however, is that in order to apply Theorem~\ref{rigexpthmfactor2} one needs to have a pair of $C^r$ functions $(\phi_1,\phi_2)$. We let
$\phi_2=\log\textup{Jac}(f_2)$ and $\phi_1=\log\textup{Jac}(f_1)-\log\textup{Jac}(f_1|_{W^{wu}})$. (Assume for simplicity that $f_i$ are orientation preserving.) It is clear from the assumtion of the corollary that the sums of $\phi_i$ agree along the periodic orbits and it is clear that $\phi_2$ is $C^r$. However, one also need to argue that $\phi_1$ is $C^r$ which is equivalent to $\log\textup{Jac}(f_1|_{W^{wu}})$ being $C^r$.

Smoothness of $\log\textup{Jac}(f_1|_{W^{wu}})$ can be established as follows. Pick a lift $\tilde f_1\colon \tilde M_1\to\tilde M_1$ to the universal cover $\tilde M_1$. Foliation $W^{wu}$ lifts to $\tilde W^{wu}$. Because $\tilde f_1$ is invertible the fast foliation $\tilde W^{uu}$ is also well defined by the standard cone argument. Notice that $\tilde W^{uu}$ is  not equivariant under the group of Deck transformations but this is not going to be important for what follows. Then, by the usual application of the $C^r$ Section Theorem~\cite{HPS}, we have that $\tilde W^{uu}$ is $C^{r+1}$ and hence $\log \textup{Jac}(\tilde f_1|_{\tilde W^{uu}})$ is $C^r$. Finally, extending $\tilde W^{uu}$ to a smooth coordinate system, we have that $Df_1$ has an upper-triangular form and hence 
$$
\log \textup{Jac}(\tilde f_1)=\log \textup{Jac}(\tilde f_1|_{\tilde W^{uu}})+\log \textup{Jac}(\tilde f_1|_{\tilde W^{wu}})
$$
 which implies that $\log \textup{Jac}(\tilde f_1|_{\tilde W^{wu}})$, and hence $\log \textup{Jac}( f_1|_{ W^{wu}})$ is $C^r$.
\end{proof}


\begin{thebibliography}{texttLL}

\bibitem[ABK70]{ABK2} P.L. Antonelli, D. Burghelea, P.J. Kahn, {\it Gromoll groups, DiffS n and bilinear constructions of exotic spheres.} Bull. Amer. Math. Soc. 76 1970 772--777.

\bibitem[ABK72]{ABK} P.L. Antonelli, D. Burghelea, P.J. Kahn, {\it The non-finite homotopy type of some diffeomorphism groups.} Topology 11 (1972), 1--49.

\bibitem[Bow75]{B} R. Bowen, {\it Equilibrium states and the ergodic theory of Anosov diffeomorphisms.} Lecture Notes in Mathematics, Vol. 470. Springer-Verlag, Berlin-New York, 1975. i+108 pp.

\bibitem[Cer61]{C} J. Cerf, {\it Topologie de certains espaces de plongements.} Bull. Soc. Math. France 89 (1961) 227--380.



\bibitem[Ell58]{ellis} R. Ellis, {\it
Distal transformation groups. }
Pacific J. Math. 8 (1958), 401--405. 

\bibitem[Far96]{F} F.T. Farrell, {\it Lectures on surgical methods in rigidity.} Published for the Tata Institute of Fundamental Research, Bombay; by Springer-Verlag, Berlin, 1996. iv+98 pp.

\bibitem[FG12]{FG2} F.T. Farrell, A. Gogolev, {\it
Anosov diffeomorphisms constructed from $\pi_k(\textup{Diff}(\mathbb S^n))$.} 
J. Topol. 5 (2012), no. 2, 276--292. 

\bibitem[FG14]{FG} F.T. Farrell, A. Gogolev, {\it Examples of expanding endomorphisms on fake tori.} J. Topol. 7 (2014), no. 3, 805--816.


\bibitem[FJ78]{FJ} F.T. Farrell, L.E. Jones, {\it Examples of expanding endomorphisms on exotic tori.}  Invent. Math. 45 (1978), no. 2, 175--179.



\bibitem[Fr70]{Fr} J. Franks, {\it
Anosov diffeomorphisms.} 1970 Global Analysis (Proc. Sympos. Pure Math., Vol. XIV, Berkeley, Calif., 1968) pp. 61--93 Amer. Math. Soc., Providence, R.I. 

\bibitem[Fur63]{Fur} H. Furstenberg, {\it The Structure of Distal Flows,} American Journal of Mathematics, Jul., 1963, Vol. 85, No. 3, pp. 477--515.


\bibitem[GKLM18]{GKLM} P. Giulietti, B. Kloeckner, A. O. Lopes, D. Marcon, {\it The calculus of thermodynamical formalism.} J. Eur. Math. Soc. (JEMS) 20 (2018), no. 10, 2357--2412.


\bibitem[G08]{G} A. Gogolev, {\it Smooth conjugacy of Anosov diffeomorphisms on higher-dimensional tori. } J. Mod. Dyn. 2 (2008), no. 4, 645--700.


\bibitem[BG97]{BG} A. Boyarsky, P. G\'ora, {\it
Laws of chaos. 
Invariant measures and dynamical systems in one dimension.} Probability and its Applications. Birkh\"auser Boston, Inc., Boston, MA, 1997. xvi+399 pp.


\bibitem[Gr81]{Gr} M. Gromov, {\it Groups of polynomial growth and expanding maps.} Inst. Hautes \'Etudes Sci. Publ. Math. No. 53 (1981), 53--73.



\bibitem[J02]{J} O. Jenkinson, {\it Smooth cocycle rigidity for expanding maps, and an application to Mostow rigidity. } Math. Proc. Cambridge Philos. Soc. 132 (2002), no. 3, 439--452. 

\bibitem[KS09]{KS} B. Kalinin, V. Sadovskaya, {\it
On Anosov diffeomorphisms with asymptotically conformal periodic data.}
Ergodic Theory Dynam. Systems 29 (2009), no. 1, 117--136. 

\bibitem[Krz77]{Krz} K. Krzy\.{z}ewski, {\it Some results on expanding mappings.} Dynamical systems, Vol. II--Warsaw, pp. 205--218. Ast\'erisque, No. 50, Soc. Math. France, Paris, 1977. 

\bibitem[Krz82]{Krz2} K. Krzy\.{z}ewski, {\it On analytic invariant measures for expanding mappings.} Colloq. Math. 46 (1982), no. 1, 59--65. 

\bibitem[L72]{L} A.N. Livsic, {\it Cohomology of dynamical systems.} Izv. Akad. Nauk SSSR Ser. Mat. 36 (1972), 1296--1320.

\bibitem[dlL87]{dlL0}
R. de la Llave,
{\it Invariants for smooth conjugacy of hyperbolic dynamical systems. II. }
Comm. Math. Phys. 109 (1987), no. 3, 369--378. 

\bibitem[dlL92]{dlL} R. de la Llave, {\it Smooth conjugacy and S-R-B measures for uniformly and non-uniformly hyperbolic systems. } Comm. Math. Phys. 150 (1992), no. 2, 289--320.

\bibitem[dlL04]{dlL2} R. de la Llave, {\it Further rigidity properties of conformal Anosov systems. } Ergodic Theory Dynam. Systems 24 (2004), no. 5, 1425--1441.

\bibitem[dlLM88]{LM} R. de la Llave, R. Moriy\'on,
{\it Invariants for smooth conjugacy of hyperbolic dynamical systems. IV. }
Comm. Math. Phys. 116 (1988), no. 2, 185--192.

\bibitem[Mal49]{malcev} A. I. Mal$'$cev,  {\it On a class of homogeneous spaces.} Izv. Akad. Nauk. SSSR. Ser. Mat. 13 (1949) 9--32.

\bibitem[Par69]{parry} W. Parry, {\it Ergodic properties of affine transformations and flows on nilmanifolds,} Amer. J. Math. 91 (1969), 757--771.

\bibitem[Rat91]{ratner} M. Ratner, {\it 
Raghunathan's topological conjecture and distributions of unipotent flows.}
Duke Math. J. 63 (1991), no. 1, 235--280.

\bibitem[Rue68]{R1} D. Ruelle, {\it Statistical mechanics of a one-dimensional lattice gas.} Comm. Math. Phys. 9 1968 267--278.

\bibitem[Rue76]{R2} D. Ruelle, {\it
A measure associated with axiom-A attractors. }
Amer. J. Math. 98 (1976), no. 3, 619--654. 

\bibitem[Hall15]{hall} B.C. Hall,  {\it Lie groups, Lie algebras, and representations: An elementary introduction,} Graduate Texts in Mathematics, 222 (2nd ed.), Springer.

\bibitem[HPS77]{HPS} M. Hirsch, C. Pugh, M. Shub. {\it Invariant manifolds.} Lecture Notes in Math., 583, Springer-Verlag, (1977).

\bibitem[Sac74]{S} R. Sacksteder, {\it The measures invariant under an expanding map.}  G\'eom\'etrie diff\'erentielle (Colloq., Univ. Santiago de Compostela, Santiago de Compostela, 1972), pp. 179--194. Lecture Notes in Math., Vol. 392, Springer, Berlin, 1974. 

\bibitem[Sh69]{Sh} M. Shub, 
{\it Endomorphisms of compact differentiable manifolds. }
Amer. J. Math. 91 (1969) 175--199. 

\bibitem[SS85]{SS} M. Shub, D. Sullivan, {\it
Expanding endomorphisms of the circle revisited. }
Ergodic Theory Dynam. Systems 5 (1985), no. 2, 285--289. 


\bibitem[Sz84]{Sz} B. Szewc, {\it The Perron-Frobenius operator in spaces of smooth functions on an interval.} Ergodic Theory Dynam. Systems 4 (1984), no. 4, 613--643. 

\bibitem[T81]{T} F. Takens, {\it
Detecting strange attractors in turbulence. }Dynamical systems and turbulence, Warwick 1980 (Coventry, 1979/1980), pp. 366--381, 
Lecture Notes in Math., 898, Springer, Berlin-New York, 1981.

\bibitem[Wal70]{wall} C.T.C. Wall, {\it Surgery on compact manifolds.} London Mathematical Society Monographs, No. 1. Academic Press, London-New York, 1970. x+280 pp. 

\bibitem[WW10]{WW} Zh. Wang, W. Sun, {\it Lyapunov exponents of hyperbolic measures and hyperbolic periodic orbits. } Trans. Amer. Math. Soc. 362 (2010), no. 8, 4267--4282.

\end{thebibliography}
\end{document}